\documentclass[12pt,reqno,tbtags]{amsart}
\usepackage[utf8]{inputenc}
\usepackage[T1]{fontenc}

\usepackage{amssymb}
\usepackage{amsthm}
\usepackage{amsmath, bm}
\usepackage{bbm}
\usepackage{mathrsfs}
\usepackage{esint} 
\usepackage{graphicx}
\usepackage{geometry}
\usepackage{hyperref}
\usepackage{float}
\usepackage{cite} 
\usepackage{esint}
\usepackage{pgfplots}
\usepackage{tikz-cd}
\usepackage{mdframed}
\usetikzlibrary{intersections}

\newtheorem{theorem}{Theorem}[section]

\newtheorem{definition}{Definition}[section]

\newtheorem{lemma}{Lemma}[section]
\newtheorem{remark}{Remark}[section]
\newtheorem{corollary}{Corollary}[section]

\allowdisplaybreaks[4]

\def\R{\mathbb{R}}

\begin{document}

\title[The Cauchy problem for the improved Boussinesq equation with spatially quasi-periodic initial data]{The Cauchy problem for the improved Boussinesq equation with spatially quasi-periodic initial data}

\author{Zhiqiang Wan}
\thanks{}
\address{School of Mathematical Sciences, University of Science and Technology of China, No. 96 Jinzhai Road, Baohe District, Hefei, Anhui Province, China}
\email{ZhiQiang\_Wan576@mail.ustc.edu.cn}

\author{Wenji Wu}
\thanks{}
\address{School of Mathematical Sciences, University of Science and Technology of China, No. 96 Jinzhai Road, Baohe District, Hefei, Anhui Province, China}
\email{wuwenji@mail.ustc.edu.cn}

\author{Heng Zhang}
\thanks{}
\address{School of Mathematical Sciences, University of Science and Technology of China, No. 96 Jinzhai Road, Baohe District, Hefei, Anhui Province, China}
\email{hengz@mail.ustc.edu.cn}

\date{\today}

\begin{abstract}
We study the Cauchy problem for the improved Boussinesq equation
\[
    u_{tt}-u_{xx}-u_{xxtt}-(u^2)_{xx}=0
\]
on the real line with spatially quasi-periodic initial data. For a non-resonant
frequency vector $\omega\in\mathbb R^\nu$, we prove local existence and uniqueness of classical spatially
quasi-periodic solutions with the same frequency vector $\omega$ in two
Fourier-side classes. First, for exponentially decaying initial Fourier
coefficients, we obtain a spatially quasi-periodic solution whose Fourier
coefficients remain exponentially decaying on an explicit time interval. Second,
for initial Fourier coefficients $c(n)$ and $d(n)$ satisfying the polynomial
decay
$
    |c(n)|+|d(n)|\lesssim (1+|n|)^{-r},
    \; r>\nu+2,
$
we prove that the corresponding spatially quasi-periodic solution preserves the
same polynomial decay rate as the initial data. We also extend these results to the nonlinearity $u^p$ with integer $p \geq 3$.
\end{abstract}

\maketitle

\section{Introduction}

In recent years, the initial value problem for time-dependent partial differential equations with almost periodic data has attracted growing attention. A major motivation for this line of research is the so-called Deift conjecture, which asserts that, for the KdV equation, if the initial data is almost periodic, then the corresponding solution should evolve almost periodically in time \cite{Deift17}. This conjectural picture has been examined in a variety of settings, including counterexamples \cite{CKV24,DLVY21}, almost periodic initial data \cite{BDGL18,EVY19,LY20}, the Toda lattice \cite{BDLV18,LYZZ24,ZC25},  quasi-periodic initial data for the KdV equation \cite{DG16} and the generalized KdV equation \cite{DLX24}, and other dispersive equations \cite{DLX24:JFA, Papenburg25, Schippa25,Xu25}.

Motivated by this circle of ideas, in this paper we consider the Cauchy problem for the improved Boussinesq equation with quasi-periodic initial data on $\mathbb{R}$, namely
\begin{equation}\label{eq:IBq-intro}
    u_{tt}-u_{xx}-u_{xxtt}-(u^2)_{xx}=0,
    \qquad (t,x)\in \mathbb{R}\times\mathbb{R},
\end{equation}
together with initial data of the form
\begin{equation}\label{eq:IBq-data-intro}
    u(0,x)=\sum_{n\in\mathbb{Z}^{\nu}}c(n)e^{i\langle n,\omega\rangle x},
    \qquad
    u_t(0,x)=\sum_{n\in\mathbb{Z}^{\nu}}d(n)e^{i\langle n,\omega\rangle x},
\end{equation}
Here the frequency dimension satisfies $\nu\ge 2$ with $\nu\in\mathbb{N}$. The vector
$
\omega=(\omega_1,\dots,\omega_\nu)\in\mathbb{R}^\nu
$
is a prescribed frequency vector, while
$
n=(n_1,\dots,n_\nu)\in\mathbb{Z}^\nu
$
denotes a lattice vector. We also write 
$
\langle n,\omega\rangle:=\sum_{j=1}^{\nu} n_j\omega_j.
$
As usual, we assume that the frequency vector $\omega$ is non-resonant, or equivalently rationally independent, that is,
\[
\langle n,\omega\rangle=0 \quad \Longrightarrow \quad n=0\in\mathbb{Z}^\nu.
\] We are interested in spatially quasi-periodic solutions of the form
\begin{equation}\label{eq:IBq-solution-intro}
    u(t,x)=\sum_{n\in\mathbb{Z}^{\nu}}\widehat{u}(t,n)e^{i\langle n,\omega\rangle x}
\end{equation}
to the quasi-periodic Cauchy problem \eqref{eq:IBq-intro}--\eqref{eq:IBq-data-intro} in the classical sense.

The improved Boussinesq equation belongs to the Boussinesq family of dispersive equations arising as asymptotic models for the bidirectional propagation of small-amplitude long surface waves in shallow water \cite{BS76,BCS02,BCS04}. From the analytical point of view, various aspects of the Cauchy problem for improved and generalized Boussinesq-type equations have been studied extensively, including well-posedness, small-data global existence, and scattering behavior; see, for example, \cite{WC02a,Wang09, WC02b,CO06}. In the quasi-periodic setting, however, the available results remain rather limited. Let us also mention the recent work by Gao-Li-Su\cite{GLS23} on the good Boussinesq equation with quasi-periodic initial data.

The spatial dependence in the present paper is neither decaying nor periodic, and this makes the problem considerably more delicate. Indeed, once one inserts the quasi-periodic Fourier series into \eqref{eq:IBq-intro}, the equation is reduced to an infinite nonlinear system of coupled ordinary differential equations for the Fourier coefficients. The corresponding Picard iteration then involves repeated discrete convolution on $\mathbb{Z}^{\nu}$, and the number of terms grows rapidly as the iteration proceeds. Thus, in order to prove convergence of the Picard sequence, one must exploit the precise combinatorial structure generated by the nonlinearity.

Throughout this paper, we use $|n|$ to denote the $\ell^1(\mathbb{Z}^\nu)$-norm of
$n=(n_1,\dots,n_\nu)\in\mathbb{Z}^\nu$, that is,
\[
|n|:=\sum_{j=1}^{\nu}|n_j|.
\]

Our first main result is stated in Theorem \ref{thm:exponential decay}, under the assumption of exponential decay.

\begin{theorem}\label{thm:exponential decay}
Let $\omega\in \mathbb{R}^{\nu}$ be non-resonant, and consider the quasi-periodic
Cauchy problem for \eqref{eq:IBq-intro}
with initial data \eqref{eq:IBq-data-intro}.
Assume that there exist $A>0$ and $0<\rho\leq 1$ such that
\[
|c(n)|\leq A e^{-\rho |n|},
\qquad
|d(n)|\leq A e^{-\rho |n|},
\qquad n\in \mathbb{Z}^{\nu}.
\]
Set
\[
b_{\rho}:=(6\rho^{-1})^{\nu},
\qquad
M:=\max\{1,Ab_{\rho}\},
\qquad
B:=2M,
\qquad
L:=\frac{1}{5M}.
\]
Then, on the time interval $[0,L]$, the quasi-periodic Cauchy problem admits a classical spatially
quasi-periodic solution of the form
\[
u(t,x)=\sum_{n\in \mathbb{Z}^{\nu}} \widehat{u}(t,n)e^{i\langle n,\omega\rangle x}
\]
such that
\[
|\widehat{u}(t,n)|\leq B e^{-\frac{\rho}{2}|n|},
\qquad
0\leq t\leq L,\ \ n\in \mathbb{Z}^{\nu}.
\]
Moreover, this solution is unique among all spatially quasi-periodic solutions
satisfying the above exponential decay estimate.
\end{theorem}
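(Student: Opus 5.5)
Substituting the ansatz \eqref{eq:IBq-solution-intro} into \eqref{eq:IBq-intro} turns the problem into an infinite system of ODEs for the Fourier coefficients. Write $\xi_n:=\langle n,\omega\rangle$. The $n$-th mode satisfies
\[
(1+\xi_n^2)\,\widehat{u}_{tt}(t,n)+\xi_n^2\,\widehat{u}(t,n)+\xi_n^2\sum_{n_1+n_2=n}\widehat{u}(t,n_1)\widehat{u}(t,n_2)=0 .
\]
Set $\sigma_n:=\xi_n/\sqrt{1+\xi_n^2}\in[0,1)$ and $\mu_n:=\xi_n^2/(1+\xi_n^2)\in[0,1)$. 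Duhamel's formula then gives the integral equation
\[
\widehat{u}(t,n)=\cos(\sigma_n t)c(n)+\frac{\sin(\sigma_n t)}{\sigma_n}d(n)-\int_0^t\frac{\sin(\sigma_n(t-s))}{\sigma_n}\,\mu_n\,(\widehat{u}*\widehat{u})(s,n)\,ds .
\]
For $n=0$ we use $\sin(\sigma t)/\sigma=t$. The key structural point is that $|\sin(\sigma_n\tau)/\sigma_n|\le\tau$ and $\mu_n\le1$ uniformly in $n$. So the operator $(1-\partial_x^2)^{-1}\partial_x^2$ is bounded, with no derivative loss, and the non-resonance of $\omega$ only guarantees that distinct $n$ give distinct spatial frequencies, so that the Fourier coefficients are well defined and unique.

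I will work in the Banach space $X$ of continuous maps $t\mapsto(\widehat{u}(t,n))_n$ on $[0,L]$ with norm $\|u\|:=\sup_{t,n}e^{\frac{\rho}{2}|n|}|\widehat{u}(t,n)|$, and show that the Duhamel map $\Phi$ is a contraction on the ball of radius $B$. The main lemma is a convolution estimate: if $|a(n)|,|b(n)|\le e^{-\frac{\rho}{2}|n|}$, then $|(a*b)(n)|\le \sum_{m}e^{-\frac{\rho}{2}(|m|+|n-m|)}$. I will bound this crudely by $e^{-\frac{\rho}{2}|n|}\sum_m e^{-\frac{\rho}{2}(|m|+|n-m|-|n|)}$. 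To keep the weight sharp I will instead lose a fraction of the exponent, using $|m|+|n-m|\ge|n|$ and bounding $\sum_m e^{-\frac{\rho}{2}|m|}\le (1+2/(e^{\rho/2}-1))^\nu\le(6/\rho)^\nu=b_\rho$ for $\rho\le1$. Presumably this is where the constant $b_\rho=(6\rho^{-1})^\nu$ comes from. The careful version handles the convolution $e^{-\rho|m|/2}e^{-\rho|n-m|/2}$ directly, with the summable factor extracted so as to leave $e^{-\frac{\rho}{2}|n|}$. One may need to estimate the linear part first with $|c|,|d|\le Ae^{-\rho|n|}$, giving linear contribution $\le A(1+L)e^{-\rho|n|}\le M e^{-\frac{\rho}{2}|n|}$, and then do the convolution at rate $\rho/2$. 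I expect the bookkeeping of this convolution constant to be the main obstacle, since the plain triangle inequality loses a factor that must be absorbed, and in the final write-up I would possibly split the exponent as $\frac{\rho}{2}=\frac{\rho}{4}+\frac{\rho}{4}$ and recompute the constant. If the constant does not close, the fallback is to run the Picard iteration and count convolution trees as the paper suggests, but I expect the direct contraction to work.

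With a convolution constant $C_\rho\lesssim b_\rho$, the self-map estimate reads
\[
\|\Phi u\|\le A(1+L)+\tfrac{L^2}{2}C_\rho B^2 .
\]
The difference estimate reads
\[
\|\Phi u-\Phi v\|\le L^2C_\rho B\,\|u-v\| .
\]
With $B=2M$ and $L=1/(5M)$, and $M\ge1$, $M\ge Ab_\rho$, the right-hand sides are at most $B$ and $\frac12\|u-v\|$; I will check these numerically. Banach's fixed point theorem then gives a unique fixed point in the ball, which yields the bound $|\widehat{u}(t,n)|\le Be^{-\frac{\rho}{2}|n|}$.

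Finally, I verify that the solution is classical. The exponential decay, together with $|\xi_n|\le|\omega|_\infty|n|$, makes the series for $u$, $u_x$, $u_{xx}$, $u_t$, $u_{tt}$, $u_{xxtt}$ converge absolutely and uniformly, because the $t$-derivatives of $\widehat{u}$ are obtained from the ODE and inherit the same decay. Termwise differentiation is therefore justified and the equation holds pointwise, with initial data matching \eqref{eq:IBq-data-intro}. For uniqueness, any quasi-periodic solution with this decay has Fourier coefficients that, by uniqueness of quasi-periodic Fourier coefficients, satisfy the mode ODEs, hence the Duhamel equation. It therefore lies in the ball, where $\Phi$ has only one fixed point.
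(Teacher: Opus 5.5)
Your proposal has a genuine gap. Its main lemma is false: the space with norm $\sup_{t,n}e^{\frac{\rho}{2}|n|}|\widehat u(t,n)|$ is not closed under convolution, so the constant $C_\rho$ does not exist. Take $n=(N,0,\dots,0)$. Every $m=(j,0,\dots,0)$ with $0\le j\le N$ satisfies $|m|+|n-m|=|n|$, hence
\[
\sum_{m\in\mathbb Z^\nu}e^{-\frac{\rho}{2}|m|}\,e^{-\frac{\rho}{2}|n-m|}\ \ge\ (N+1)\,e^{-\frac{\rho}{2}|n|}.
\]
The time factor $1-\cos(\sigma_n t)$ in the Duhamel term does not vanish as $N\to\infty$. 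So the Duhamel map sends $\widehat u\equiv Be^{-\frac{\rho}{2}|n|}$ outside $X$, and there is no self-map to contract.

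Splitting $\frac{\rho}{2}=\frac{\rho}{4}+\frac{\rho}{4}$ only gives an output bound at rate $\frac{\rho}{4}$. The map therefore lowers the decay rate at every step, and no fixed weighted-sup space is preserved.

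Even a finite constant of the size $b_\rho$ you expect would not match the theorem. It multiplies $B^2$, so your Lipschitz bound $L^2C_\rho B=2C_\rho/(25M)$ exceeds $\frac12$ when $M=1$, since $b_\rho\ge 6^\nu$. In the theorem, $b_\rho$ is attached to $A$, and that is what the paper's tree expansion delivers:
\begin{itemize}
\item[(i)] Each Picard iterate is a multilinear expression in the \emph{initial} coefficients, which decay at the full rate $\rho$.
\item[(ii)] The exponent is split only once, $e^{-\rho|m|}\le e^{-\frac{\rho}{2}|m|}e^{-\frac{\rho}{2}|n|}$, using $|n|=|\mu(m)|\le|m|$.
\item[(iii)] Each leaf costs $Ab_\rho\le M$, and $\sum_\gamma (Mt)^{\ell(\gamma)}/D(\gamma)\le 2$ for $Mt\le\frac15$ gives $|c_k(t,n)|\le Be^{-\frac{\rho}{2}|n|}$ uniformly in $k$.
\item[(iv)] Convergence is then proved with a deliberate loss to rate $\frac{\rho}{4}$.
\end{itemize}

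A direct contraction does work if you replace the weighted sup norm by the weighted $\ell^1$ norm $\|a\|_{Y_s}:=\sum_n e^{s|n|}|a(n)|$. This norm satisfies $\|a*b\|_{Y_s}\le\|a\|_{Y_s}\|b\|_{Y_s}$, because $e^{s|n|}\le e^{s|n_1|}e^{s|n_2|}$. The data satisfy $\|c\|_{Y_{\rho/2}},\|d\|_{Y_{\rho/2}}\le Ab_\rho\le M$, and the nonlinear kernel is bounded by $1$. On the ball of radius $2M$ in $C([0,L];Y_{\rho/2})$ you then get $\|\mathcal{T}a\|\le(1+L)M+4LM^2=\frac{9M+1}{5}\le 2M$ and Lipschitz constant $4ML=\frac45$, with exactly the stated constants. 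The pointwise bound follows from $e^{\frac{\rho}{2}|n|}|\widehat u(t,n)|\le\|\widehat u(t)\|_{Y_{\rho/2}}\le B$; this is the route the paper sketches for $u^p$, $p\ge3$.

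Your uniqueness step must change as well. A solution satisfying only $|\widehat u(t,n)|\le Be^{-\frac{\rho}{2}|n|}$ need not lie in that $Y_{\rho/2}$-ball, so uniqueness of the fixed point does not apply to it. Instead, note that such a solution lies in $Y_{\rho/4}$ with norm at most $B\widetilde b_\rho$, where $\widetilde b_\rho=(12\rho^{-1})^\nu$. Then apply Gronwall to the difference in $Y_{\rho/4}$, which needs no smallness. The same loss also undercuts your claim that $\partial_t^2\widehat u$ inherits rate $\frac{\rho}{2}$ pointwise, but any positive rate suffices for the classical verification, as in the paper.
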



We now turn to polynomially decaying initial Fourier coefficients. In contrast with
many related quasi-periodic results, the decay rate in our polynomial theorem is
preserved by the nonlinear evolution. This is our second main result.

\begin{theorem}\label{thm:polynomial decay}
Let $\omega\in \mathbb{R}^{\nu}$ be non-resonant, and consider the quasi-periodic
Cauchy problem of \eqref{eq:IBq-intro}
with initial data \eqref{eq:IBq-data-intro}.
Assume that there exist $A>0$ and $r>\nu+2$ such that
\[
|c(n)|\le A(1+|n|)^{-r},
\quad
|d(n)|\le A(1+|n|)^{-r},
\quad n\in\mathbb Z^\nu.
\]
Let
\[
H(r;\nu):=\sum_{m\in\mathbb Z^\nu}(1+|m|)^{-r},
\quad
K_{r,\nu}:=2^{r+1}H(r;\nu),
\]
and set
\[
M_r:=\max\{1,A K_{r,\nu}\}, \quad
L_r:=\frac{1}{5M_r}.
\]
Then, on the time interval $[0,L_r]$, the quasi-periodic Cauchy problem admits a classical spatially
quasi-periodic solution of the form
\[
u(t,x)=\sum_{n\in\mathbb Z^\nu}\widehat u(t,n)e^{i\langle n,\omega\rangle x}
\]
such that
\[
|\widehat u(t,n)|\le 2A(1+|n|)^{-r},
\qquad
0\le t\le L_r,\ \ n\in\mathbb Z^\nu.
\]
Moreover, this solution is unique among all spatially quasi-periodic solutions
satisfying the above polynomial decay estimate.
\end{theorem}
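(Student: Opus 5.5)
Inserting the quasi-periodic ansatz into \eqref{eq:IBq-intro} gives, for each $n\in\mathbb Z^\nu$ and with $\lambda_n:=\langle n,\omega\rangle$, the ODE
\[
(1+\lambda_n^2)\widehat u_{tt}(t,n)+\lambda_n^2\widehat u(t,n)=-\lambda_n^2\,(\widehat u*\widehat u)(t,n),
\qquad
(\widehat u*\widehat u)(n)=\sum_{m\in\mathbb Z^\nu}\widehat u(n-m)\widehat u(m).
\]
Set $\sigma_n:=|\lambda_n|/\sqrt{1+\lambda_n^2}\in[0,1)$ and $\mu_n:=\lambda_n^2/(1+\lambda_n^2)\in[0,1)$. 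By Duhamel's formula this becomes the integral system
\[
\widehat u(t,n)=\cos(\sigma_n t)c(n)+\frac{\sin(\sigma_n t)}{\sigma_n}d(n)-\mu_n\int_0^t\frac{\sin(\sigma_n(t-s))}{\sigma_n}(\widehat u*\widehat u)(s,n)\,ds .
\]
At $n=0$ the ratio $\sin(\sigma_n t)/\sigma_n$ is read as $t$. The key point is that all multipliers are uniformly bounded: $|\cos|\le1$, $|\sin(\sigma t)/\sigma|\le t\le L_r\le 1$, and $\mu_n\le1$. This is where the smoothing of $u_{xxtt}$ is used, and non-resonance is needed only to make the Fourier representation unique. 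I therefore plan a contraction-mapping argument rather than tracking Picard iterates combinatorially.

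I will work in the Banach space $X$ of continuous maps $t\mapsto \widehat u(t,\cdot)$ on $[0,L_r]$, with weighted norm $\|\widehat u\|:=\sup_{t,n}(1+|n|)^r|\widehat u(t,n)|$, and the closed ball $\mathcal B:=\{\|\widehat u\|\le 2A\}$. The central estimate is the discrete convolution inequality
\[
\sum_m(1+|n-m|)^{-r}(1+|m|)^{-r}\le K_{r,\nu}(1+|n|)^{-r}.
\]
To prove it, split the sum according to whether $|m|\ge|n|/2$ or $|n-m|\ge |n|/2$. In the first case $(1+|m|)^{-r}\le 2^r(1+|n|)^{-r}$, and the remaining factor sums to $H(r;\nu)$; the second case is symmetric. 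Together they give the constant $2\cdot 2^rH=K_{r,\nu}$, and $H<\infty$ since $r>\nu$.

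Consequently, if $\widehat u\in\mathcal B$ then $|(\widehat u*\widehat u)(t,n)|\le 4A^2K_{r,\nu}(1+|n|)^{-r}$. Call the right side of the integral system $\Phi$. Then $(1+|n|)^r|\Phi\widehat u(t,n)|\le A+A t+4A^2K_{r,\nu}t$, using $t\le L_r$. This is where I expect the main obstacle: it needs care in how $c$ and $d$ are combined. Bounding $\cos$ and $\sin/\sigma$ separately gives only $A(1+L_r)$. The nonlinear part is at most $4A\,(AK)L_r\le 4A/5$, because $AK\le M_r$, but the total still slightly exceeds $2A$. I will first try to tighten the estimate by writing $|\cos(\sigma t)c+\sigma^{-1}\sin(\sigma t)d|\le A(1+t)$ and using $L_r\le 1/5$: this gives $A(1+\tfrac15)+\tfrac45A=2A$ exactly. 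So $\Phi$ maps $\mathcal B$ into itself.

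For contraction, I use $a*a-b*b=(a-b)*a+b*(a-b)$, which gives $\|\Phi a-\Phi b\|\le 4AK_{r,\nu}L_r\|a-b\|\le\tfrac45\|a-b\|$. Continuity in $t$ follows from uniform convergence of the series. Banach's fixed point theorem then yields a unique fixed point in $\mathcal B$, and any quasi-periodic solution obeying the stated bound solves the integral system by non-resonance, so it lies in $\mathcal B$ and coincides with this fixed point. Finally, for classical regularity, the decay $r>\nu+2$ lets $\sum|\lambda_n|^2(1+|n|)^{-r}$ converge, since $|\lambda_n|\lesssim|n|$. Hence $u$, $u_x$, $u_{xx}$ are given by absolutely and uniformly convergent series. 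Differentiating the integral system twice in $t$ shows that $\widehat u_{tt}$ and $\lambda_n^2\widehat u_{tt}$ obey the same kind of bounds, so $u_{tt}$ and $u_{xxtt}$ converge as well, and $u$ solves \eqref{eq:IBq-intro} pointwise.
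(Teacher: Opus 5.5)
Your proposal is correct, and it closes with exactly the paper's constants. On $\mathcal B$ you get $(1+|n|)^r|\Phi\widehat u(t,n)|\le A(1+L_r)+4A\,(AK_{r,\nu})L_r\le \tfrac65A+\tfrac45A=2A$, and the Lipschitz constant is $4AK_{r,\nu}L_r\le\tfrac45$. The ``obstacle'' you describe is not actually there: bounding $|c(n)|+t|d(n)|$ term by term already gives $A(1+L_r)\le\tfrac65A$, which is the same estimate you end up using.

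Your route differs from the paper's. For Theorem~\ref{thm:polynomial decay} the paper does not invoke a fixed-point theorem. It keeps the Picard iterates and the binary-tree expansion of Section~\ref{sec:2}, and proves by induction on trees that $\sum_{\mu(m)=n}|C(k,\gamma)(m)|\le A(AK_{r,\nu})^{\iota(\gamma)}(1+|n|)^{-r}$ (Lemma~\ref{lem:sum-C-polynomial-bound}, which is the weighted convolution inequality iterated along the tree). Combined with $|J(k,\gamma)|\le t^{\ell(\gamma)}/D(\gamma)$ and the tree-sum bound $\le 2$, this gives $|c_k(t,n)|\le2A(1+|n|)^{-r}$ uniformly in $k$. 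The paper then proves $|c_k-c_{k-1}|\le A(4AK_{r,\nu}t)^k(1+|n|)^{-r}/k!$, passes to the limit, and gets uniqueness by Gronwall.

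You use the same single analytic input, namely that Lemma~\ref{lem:weighted-convolution} makes $X_r$ an algebra, but apply it once to the whole nonlinearity. This makes the tree bookkeeping ($\mathrm{Fl}_{k,\gamma}$, $\iota$, $\ell$, $D$) unnecessary. Uniqueness in the ball $\{\|\cdot\|\le 2A\}$, which is exactly the class in the theorem, then comes free from Banach's theorem. In fact this is the argument the paper itself sketches in Section~\ref{sec:general-power} for $p\ge3$, specialized to $p=2$.

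What the paper's route buys instead is:
\begin{itemize}
\item an explicit representation of every iterate;
\item factorial rather than geometric convergence of the Picard sequence;
\item a Gronwall uniqueness argument that works for any two solutions bounded in $C([0,L_r];X_r)$, not just those in a fixed ball;
\item uniformity of presentation with the exponential case.
\end{itemize}

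Both arguments tacitly assume that a competing solution's coefficients are continuous in $t$, so that they lie in the function space. The paper assumes $c,\widetilde c\in C([0,L_r];X_r)$ in the same way, so this is not a gap relative to the paper.

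In your regularity step, state explicitly two further facts. First, the coefficients of $u^2$ are $\widehat u*\widehat u$, by the Cauchy product of absolutely convergent series. Second, $\sum_n\lambda_n^2|(\widehat u*\widehat u)(t,n)|<\infty$ uniformly in $t$, by your convolution inequality. Together these show that $(u^2)_{xx}$ may also be computed termwise.
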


\begin{remark} In the polynomial-decay case, we only require the condition 
$r>\nu+2$, which is precisely the minimal assumption needed to justify the absolute and uniform convergence of the Fourier series after term-by-term differentiation applied to the equation. More importantly, the solution inherits the polynomial decay rate of the initial data, so that no decay loss occurs throughout the iteration procedure. Our approach is comparable to the recent work of Damanik, Li and Xu in \cite{DLX24:JFA} on the generalized Benjamin--Bona--Mahony equation. More precisely, while the polynomial result in \cite{DLX24:JFA} assumes $r>4\nu+8$, and yields only the decay
$(1+|n|)^{-r/2}$ for the solution, Theorem~\ref{thm:polynomial decay}
requires only $r>\nu+2$ and preserves the full decay rate
$(1+|n|)^{-r}$.
\end{remark}

Let us emphasize that, in the integrable KdV/Toda setting, spectral information
plays a crucial role in obtaining global-in-time results for quasi-periodic or almost
periodic data. For the KdV equation, Damanik and Goldstein~\cite{DG16} first
established a local existence and uniqueness result for quasi-periodic initial data
with exponentially decaying Fourier coefficients, and then, for small analytic
quasi-periodic data with a Diophantine frequency vector, used the conservation of
the spectrum of the associated Sturm--Liouville operator, together with inverse
spectral estimates for quasi-periodic Schr\"odinger operators, to iterate the local
theory and obtain global existence and uniqueness. More recently,
Leguil--You--Zhao--Zhou~\cite{LYZZ24} obtained sharp information on spectral gaps
and homogeneity of spectra for quasi-periodic Schr\"odinger operators, and used
these spectral results to prove a discrete version of Deift's conjecture for the
Toda flow, namely global solutions and almost periodicity in time for
subcritical analytic quasi-periodic initial data. In contrast, for the Boussinesq
equation considered here, no comparable inverse spectral mechanism is available;
accordingly, our approach is purely Fourier-analytic and combinatorial, and yields
local existence and uniqueness.

The proofs of Theorem~\ref{thm:exponential decay} and Theorem~\ref{thm:polynomial decay} follow a common
strategy, see Figure \ref{fig:proof-strategy}. We first reduce the PDE to an infinite nonlinear system of coupled ordinary
differential equations for the Fourier coefficients. We then construct a Picard
sequence and represent it by means of a combinatorial tree adapted to the present
second-order evolution, in which the initial position and initial velocity generate
two different types of leaves. In the exponential-decay setting, the main task is to
derive uniform exponential bounds for the Picard sequence and prove its convergence.
In the polynomial-decay setting, the crucial new ingredient is a weighted convolution
estimate in a suitable Fourier-side space, which allows us to close the nonlinear
iteration without sacrificing the decay exponent. Uniqueness is then obtained by a
Gronwall-type argument in the corresponding weighted norm. 

\begin{figure}[htbp]
\centering
\resizebox{0.98\textwidth}{!}{%
\begin{tikzpicture}[
    x=1cm,y=1cm,
    box/.style={
        draw,
        thick,
        rectangle,
        align=center,
        inner xsep=6pt,
        inner ysep=4pt,
        font=\normalsize\itshape
    },
    topbox/.style={
        box,
        font=\large\itshape
    },
    arr/.style={
        -{Stealth[length=2.5mm]},
        thick,
        shorten >=2pt,
        shorten <=2pt
    },
    lab/.style={
        fill=white,
        inner sep=1.5pt,
        align=center,
        font=\scriptsize\itshape
    }
]

\node[box, font=\small\itshape, text width=8.6cm] (reduction) at (-3.0,6.45)
{reduction of the improved Boussinesq equation\\
to a nonlinear infinite system of coupled ODEs};

\node[box, minimum width=3.4cm] (picard) at (-3.0,4.65)
{Picard iteration};

\node[box, minimum width=3.6cm] (cauchy) at (-3.0,2.75)
{Cauchy sequence};

\node[box, text width=7.0cm] (existence) at (-3.0,0.85)
{local classical quasi-periodic solution};

\node[box, text width=5.5cm] (tree) at (5.0,4.65)
{combinatorial tree analysis};

\node[box, text width=6.1cm] (decay) at (5.0,2.75)
{uniform exponential decay\\
 polynomial decay without loss};

\node[box, minimum width=2.9cm] (uniq) at (5.0,0.85)
{uniqueness};

\draw[arr] (reduction.south) -- node[lab, right=1mm, pos=.46]
{feedback of quadratic nonlinearity} (picard.north);

\draw[arr] (picard.south) -- (cauchy.north);

\draw[arr] (cauchy.south) -- (existence.north);

\draw[arr] (picard.east) -- node[lab, above=2.5pt]
{discrete convolution} (tree.west);

\draw[arr] (tree.south) -- node[lab, right=1mm]
{tree bounds} (decay.north);

\draw[arr] (decay.west) -- node[lab, above=2.5pt]
{interpolation} (cauchy.east);

\draw[arr] (decay.south) -- node[lab, right=1mm]
{weighted norm} (uniq.north);

\end{tikzpicture}%
}
\caption{Structure of the proof.}
\label{fig:proof-strategy}
\end{figure}

We remark that the quadratic nonlinearity in \eqref{eq:IBq-intro} is not essential
for the local theory developed in this paper. The same argument applies to the
generalized improved Boussinesq equation
\[
    u_{tt}-u_{xx}-u_{xxtt}-(u^p)_{xx}=0,
    \qquad p\in\mathbb N,\quad p\ge3.
\]
Indeed, compared with the quadratic case, the only structural change is the replacement of
the binary convolution by the $p$-fold convolution. In a full tree expansion, this
corresponds to replacing binary trees by $p$-ary trees. The uniform boundedness of the
Boussinesq multiplier, together with the weighted convolution estimates used below,
allows one to close the Picard iteration in both the exponential-decay and
polynomial-decay classes. Thus analogous local existence and uniqueness results hold
for all integer powers $p\ge3$. We give a sketch of proof of  this extension in
Section~\ref{sec:general-power}.

We also mention that although the present paper is concerned with the one-dimensional spatial problem,
the combinatorial mechanism used here is not intrinsically restricted to one space
dimension. What is essential is the reduction of the quasi-periodic Cauchy problem
to an infinite system of coupled ODEs in Fourier space, together with a precise
description of the discrete convolutions generated by the Picard iteration. For
higher-dimensional spatial variables with a separated quasi-periodic structure, for
instance with a diagonal frequency matrix,
the same philosophy can still be implemented, although the bookkeeping of lattice
variables becomes more involved. In this direction, Xu~\cite{Xu25} developed a
diagrammatic version of the combinatorial analysis to treat higher-dimensional
weakly nonlinear Schr\"odinger equations with quasi-periodic initial data.

The rest of this paper is organized as follows. In Section \ref{sec:2}, we reduce the
quasi-periodic Cauchy problem to an infinite nonlinear system of coupled ordinary
differential equations for the Fourier coefficients and construct the associated
Picard sequence. We then introduce the combinatorial tree expansion adapted to the
second-order structure of the equation and use it to prove the required uniform
decay estimates, convergence of the Picard sequence, and uniqueness. This proves
Theorem~\ref{thm:exponential decay}. In Section \ref{sec:3}, we work in a weighted Fourier-side space
and establish the convolution estimate needed to close the iteration in the
polynomial-decay class without any loss of decay, thereby proving
Theorem~\ref{thm:polynomial decay}. Finally, in Section~\ref{sec:general-power},
we explain how the same method extends to generalized improved Boussinesq equations
with integer-power nonlinearities $u^p$, $p\geq 3$.

\section{Picard sequence and proof of Theorem \ref{thm:exponential decay}}\label{sec:2}

\subsection{Reduction}\label{sec:reduction}

In this subsection, we reduce the quasi-periodic Cauchy problem for
\eqref{eq:IBq-intro}
to an infinite nonlinear system of coupled ordinary differential equations in Fourier space.

Let $\omega\in\mathbb{R}^{\nu}$ be a fixed non-resonant frequency vector, and consider a spatially quasi-periodic solution of the form
\begin{equation*}
u(t,x)=\sum_{n\in\mathbb{Z}^{\nu}} c(t,n)e^{i\langle n,\omega\rangle x}.
\end{equation*}
The initial data are,
\begin{equation*}
u(0,x)=\sum_{n\in\mathbb{Z}^{\nu}} c(n)e^{i\langle n,\omega\rangle x},
\qquad
u_t(0,x)=\sum_{n\in\mathbb{Z}^{\nu}} d(n)e^{i\langle n,\omega\rangle x},
\end{equation*}
where
\[
c(n)=c(0,n), \qquad d(n)=\partial_t c(0,n).
\]

For convenience, write
\[
\theta_n:=\langle n,\omega\rangle, \qquad n\in\mathbb{Z}^{\nu}.
\]
Formally, by the Cauchy product for infinite series, we have
\begin{equation*}
u^2(t,x)
=
\sum_{n\in\mathbb{Z}^{\nu}}
\left(
\sum_{\substack{n_1,n_2\in\mathbb{Z}^{\nu}\\ n_1+n_2=n}}
c(t,n_1)c(t,n_2)
\right)e^{i\theta_n x}.
\end{equation*}
Assuming that differentiation and summation can be interchanged, we obtain
\begin{align}
u_{tt}(t,x)
&=
\sum_{n\in\mathbb{Z}^{\nu}} c_{tt}(t,n)e^{i\theta_n x}, \label{eq:utt}\\
-u_{xx}(t,x)
&=
\sum_{n\in\mathbb{Z}^{\nu}} \theta_n^2 c(t,n)e^{i\theta_n x}, \notag\\
-u_{xxtt}(t,x)
&=
\sum_{n\in\mathbb{Z}^{\nu}} \theta_n^2 c_{tt}(t,n)e^{i\theta_n x}, \notag\\
-(u^2)_{xx}(t,x)
&=
\sum_{n\in\mathbb{Z}^{\nu}}
\theta_n^2
\left(
\sum_{\substack{n_1,n_2\in\mathbb{Z}^{\nu}\\ n_1+n_2=n}}
c(t,n_1)c(t,n_2)
\right)e^{i\theta_n x}. \label{eq:nonlinear-term}
\end{align}
Substituting \eqref{eq:utt}--\eqref{eq:nonlinear-term} into \eqref{eq:IBq-intro}, we get
\[
\sum_{n\in\mathbb{Z}^{\nu}}
\Bigg[
(1+\theta_n^2)c_{tt}(t,n)
+\theta_n^2 c(t,n)
+\theta_n^2
\sum_{\substack{n_1,n_2\in\mathbb{Z}^{\nu}\\ n_1+n_2=n}}
c(t,n_1)c(t,n_2)
\Bigg]e^{i\theta_n x}
=0.
\]
Interpreting Fourier coefficients with respect to the mean inner product introduced below, \eqref{eq:IBq-intro} is therefore equivalent to the following infinite system of coupled ODEs:
\begin{equation}\label{eq:ode-system-raw}
(1+\theta_n^2)c_{tt}(t,n)
+\theta_n^2 c(t,n)
+\theta_n^2
\sum_{\substack{n_1,n_2\in\mathbb{Z}^{\nu}\\ n_1+n_2=n}}
c(t,n_1)c(t,n_2)
=0,
\qquad n\in\mathbb{Z}^{\nu}.
\end{equation}

It is convenient to introduce
\begin{equation*}
\beta(n):=\frac{\theta_n^2}{1+\theta_n^2},
\qquad
\Omega(n):=\sqrt{\beta(n)}=\frac{|\theta_n|}{\sqrt{1+\theta_n^2}},
\qquad n\in\mathbb{Z}^{\nu}.
\end{equation*}
Then \eqref{eq:ode-system-raw} can be rewritten as
\begin{equation}\label{eq:ode-system}
c_{tt}(t,n)+\Omega(n)^2 c(t,n)
=
-\Omega(n)^2
\sum_{\substack{n_1,n_2\in\mathbb{Z}^{\nu}\\ n_1+n_2=n}}
c(t,n_1)c(t,n_2),
\qquad n\in\mathbb{Z}^{\nu}.
\end{equation}
Observe that
\[
0\leq \beta(n)<1,
\qquad
0\leq \Omega(n)<1,
\qquad n\in\mathbb{Z}^{\nu}.
\]

Next we derive the associated integral equation. For $n\neq 0$, motivated by an idea from the \cite{KPV91}, by variation of constants formula applied to \eqref{eq:ode-system}, we have
\begin{align}
c(t,n)
&=
\cos(\Omega(n)t)c(n)
+\frac{\sin(\Omega(n)t)}{\Omega(n)}\,d(n) \notag\\
&\quad
-\Omega(n)^2
\int_0^t
\frac{\sin(\Omega(n)(t-\tau))}{\Omega(n)}
\sum_{\substack{n_1,n_2\in\mathbb{Z}^{\nu}\\ n_1+n_2=n}}
c(\tau,n_1)c(\tau,n_2)\,d\tau \notag\\
&=
\cos(\Omega(n)t)c(n)
+\frac{\sin(\Omega(n)t)}{\Omega(n)}\,d(n) \notag\\
&\quad
-\Omega(n)
\int_0^t
\sin(\Omega(n)(t-\tau))
\sum_{\substack{n_1,n_2\in\mathbb{Z}^{\nu}\\ n_1+n_2=n}}
c(\tau,n_1)c(\tau,n_2)\,d\tau .
\label{eq:integral-nonzero}
\end{align}
For $n=0$, since $\theta_0=0$, \eqref{eq:ode-system} reduces to
\[
c_{tt}(t,0)=0,
\]
and hence
\begin{equation}\label{eq:integral-zero}
c(t,0)=c(0)+t\,d(0).
\end{equation}

To unify \eqref{eq:integral-nonzero} and \eqref{eq:integral-zero}, we define
\begin{equation*}
G_n(t):=
\begin{cases}
\cos(\Omega(n)t), & n\neq 0,\\
1, & n=0,
\end{cases}
\qquad
K_n(t):=
\begin{cases}
\dfrac{\sin(\Omega(n)t)}{\Omega(n)}, & n\neq 0,\\
t, & n=0.
\end{cases}
\end{equation*}
Then \eqref{eq:ode-system} is equivalent to
\begin{equation}\label{eq:integral-equation}
c(t,n)
=
G_n(t)c(n)+K_n(t)d(n)
-\beta(n)\int_0^t K_n(t-\tau)
\sum_{\substack{n_1,n_2\in\mathbb{Z}^{\nu}\\ n_1+n_2=n}}
c(\tau,n_1)c(\tau,n_2)\,d\tau,
\qquad n\in\mathbb{Z}^{\nu}.
\end{equation}

Motivated by \eqref{eq:integral-equation}, we define the Picard sequence $\{c_k(t,n)\}_{k\geq 0}$ by taking
\begin{equation*}
c_0(t,n):=G_n(t)c(n)+K_n(t)d(n),
\end{equation*}
and, for every $k\geq 1$,
\begin{equation}\label{eq:picard-iteration-recall}
c_k(t,n)
:=
c_0(t,n)
-\beta(n)\int_0^t K_n(t-\tau)
\sum_{\substack{n_1,n_2\in\mathbb{Z}^{\nu}\\ n_1+n_2=n}}
c_{k-1}(\tau,n_1)c_{k-1}(\tau,n_2)\,d\tau.
\end{equation}
Our goal in the subsequent subsections is to show that the Picard sequence converges and hence yields a spatially quasi-periodic solution to \eqref{eq:IBq-intro}.

\subsection{Combinatorial tree and flattening of frequency variables}\label{sec:tree-setup}

In this subsection, we introduce the combinatorial objects associated with the Picard
iteration. The tree representation itself will be stated in the next subsection.

We emphasize that, at this stage, all manipulations are understood formally.
The interchange of summation and integration, as well as the rearrangement of the
series below, will be justified later by the absolute convergence estimates proved in
Subsection~\ref{sec:uniform-decay}. We also recall that the identification of Fourier
coefficients is understood with respect to the mean inner product
\[
\langle f,g\rangle_{\mathrm{av}}
:=
\lim_{L\to\infty}\frac{1}{2L}\int_{-L}^{L} f(x)\overline{g(x)}\,dx .
\]

For the second-order equation considered here, it is convenient to merge the time
propagator and the nonlinear weight into a single factor. More precisely, define
\begin{equation}\label{eq:Phi-definition}
\Phi_n(t):=\beta(n)K_n(t)
=
\begin{cases}
\Omega(n)\sin(\Omega(n)t), & n\neq 0,\\[1mm]
0, & n=0.
\end{cases}
\end{equation}

The linear part of the solution contains two components, namely the contribution
generated by the initial position $c(n)$ and the one generated by the initial
velocity $d(n)$. Accordingly, our tree has two kinds of leaves. Set
\[
\mathfrak{L}:=\{\mathtt{c},\mathtt{d}\},
\]
where $\mathtt{c}$ corresponds to the leaf associated with $c(n)$, and $\mathtt{d}$
corresponds to the leaf associated with $d(n)$.

We define the set of trees recursively by
\begin{equation*}
\mathfrak{T}(0):=\mathfrak{L},
\qquad
\mathfrak{T}(k):=\mathfrak{L}\cup \big(\mathfrak{T}(k-1)\big)^2,
\qquad k\geq 1.
\end{equation*}
Thus, for every $k\geq 0$, the set $\mathfrak{T}(k)$ contains the two degenerate
trees $\mathtt{c}$ and $\mathtt{d}$, as well as all binary trees generated by the
quadratic nonlinearity up to depth $k$.

For every $\gamma\in \bigcup_{k\geq 0}\mathfrak{T}(k)$, define recursively
\begin{equation}\label{eq:sigma-definition}
\sigma(\gamma):=
\begin{cases}
1, & \gamma=\mathtt{c}\ \text{or}\ \gamma=\mathtt{d},\\[1mm]
\sigma(\gamma_1)+\sigma(\gamma_2), & \gamma=(\gamma_1,\gamma_2).
\end{cases}
\end{equation}
Hence $\sigma(\gamma)$ is the number of leaves of $\gamma$.

Next we define the frequency sets attached to each tree. For $k\geq 0$, let
\[
N(k,\gamma):=
\begin{cases}
\mathbb{Z}^{\nu}, & \gamma\in \mathfrak{L},\\[2mm]
N(k-1,\gamma_1)\times N(k-1,\gamma_2), &
k\geq 1,\ \gamma=(\gamma_1,\gamma_2)\in \big(\mathfrak{T}(k-1)\big)^2 .
\end{cases}
\]
For every $m\in N(k,\gamma)$, we define its total frequency recursively by
\begin{equation}\label{eq:mu-definition}
\mu(m):=
\begin{cases}
m, & m\in \mathbb{Z}^{\nu},\\[1mm]
\mu(m_1)+\mu(m_2), & m=(m_1,m_2).
\end{cases}
\end{equation}

To make the leaf variables explicit, we now introduce a flattening map and a leaf-label
sequence.

For each $k\geq 0$ and each $\gamma\in\mathfrak{T}(k)$, define recursively a map
\[
\mathrm{Fl}_{k,\gamma}:N(k,\gamma)\to (\mathbb{Z}^{\nu})^{\sigma(\gamma)}
\]
and a sequence
\[
\eta_{k,\gamma}
=
\bigl(\eta_{k,\gamma,1},\dots,\eta_{k,\gamma,\sigma(\gamma)}\bigr)
\in \{\mathtt{c},\mathtt{d}\}^{\sigma(\gamma)}
\]
as follows.

\medskip
\noindent
\textbf{Leaf case.}
If $\gamma=\mathtt{c}$, then
\[
\mathrm{Fl}_{k,\mathtt{c}}(m):=(m),
\qquad
\eta_{k,\mathtt{c}}:=(\mathtt{c}).
\]
If $\gamma=\mathtt{d}$, then
\[
\mathrm{Fl}_{k,\mathtt{d}}(m):=(m),
\qquad
\eta_{k,\mathtt{d}}:=(\mathtt{d}).
\]

\medskip
\noindent
\textbf{Non-leaf case.}
If $k\geq 1$ and $\gamma=(\gamma_1,\gamma_2)\in (\mathfrak{T}(k-1))^2$, then for
$m=(m^{(1)},m^{(2)})\in N(k,\gamma)=N(k-1,\gamma_1)\times N(k-1,\gamma_2)$, define
\[
\mathrm{Fl}_{k,\gamma}(m)
:=
\bigl(\mathrm{Fl}_{k-1,\gamma_1}(m^{(1)}),\,
\mathrm{Fl}_{k-1,\gamma_2}(m^{(2)})\bigr),
\]
where the right-hand side means concatenation of the two tuples, and define
\[
\eta_{k,\gamma}
:=
\bigl(\eta_{k-1,\gamma_1},\,\eta_{k-1,\gamma_2}\bigr),
\]
again by concatenation.

\begin{lemma}\label{lem:flattening}
For every $k\geq 0$ and every $\gamma\in\mathfrak{T}(k)$, the following statements hold.

\begin{enumerate}
\item[(1)] The map $\mathrm{Fl}_{k,\gamma}$ is a bijection from $N(k,\gamma)$ onto
$(\mathbb{Z}^{\nu})^{\sigma(\gamma)}$.

\item[(2)] If
\[
\mathrm{Fl}_{k,\gamma}(m)=(m_1,\dots,m_{\sigma(\gamma)}),
\]
then
\begin{equation}\label{eq:mu-flattening}
\mu(m)=\sum_{j=1}^{\sigma(\gamma)} m_j.
\end{equation}
\end{enumerate}
\end{lemma}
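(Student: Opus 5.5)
We argue by induction on $k$, and within each $k$ by the recursive structure of $\gamma$. The definitions of $N(k,\gamma)$, $\mathrm{Fl}_{k,\gamma}$ and $\mu$ all follow the same recursion: a leaf at any level, or a pair $(\gamma_1,\gamma_2)$ with $\gamma_i\in\mathfrak{T}(k-1)$.

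\textbf{Base case.} This covers $k=0$, and more generally any leaf $\gamma\in\mathfrak{L}$ at any level $k$. Here $N(k,\gamma)=\mathbb{Z}^\nu$, $\sigma(\gamma)=1$ and $\mathrm{Fl}_{k,\gamma}(m)=(m)$. This map is the canonical identification of $\mathbb{Z}^\nu$ with $(\mathbb{Z}^\nu)^1$, so it is a bijection. Moreover, $\mu(m)=m$ by \eqref{eq:mu-definition}, which gives \eqref{eq:mu-flattening}.

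\textbf{Inductive step.} Let $k\ge1$ and $\gamma=(\gamma_1,\gamma_2)\in(\mathfrak{T}(k-1))^2$, and assume both statements hold at level $k-1$ for $\gamma_1$ and $\gamma_2$.

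For (1), write $\sigma_i:=\sigma(\gamma_i)$, so that $\sigma(\gamma)=\sigma_1+\sigma_2$ by \eqref{eq:sigma-definition}. The map $\mathrm{Fl}_{k,\gamma}$ is the composition of $\mathrm{Fl}_{k-1,\gamma_1}\times\mathrm{Fl}_{k-1,\gamma_2}$ with the concatenation map
\[
(\mathbb{Z}^\nu)^{\sigma_1}\times(\mathbb{Z}^\nu)^{\sigma_2}\to(\mathbb{Z}^\nu)^{\sigma_1+\sigma_2}.
\]
The first map is a product of bijections, hence a bijection. Concatenation is a bijection, with inverse given by splitting a tuple after its first $\sigma_1$ entries. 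The composition is therefore a bijection.

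For (2), let $m=(m^{(1)},m^{(2)})$. Then $\mathrm{Fl}_{k,\gamma}(m)$ consists of the $\sigma_1$ entries of $\mathrm{Fl}_{k-1,\gamma_1}(m^{(1)})$ followed by the $\sigma_2$ entries of $\mathrm{Fl}_{k-1,\gamma_2}(m^{(2)})$. The sum of all entries thus splits into two blocks. By the inductive hypothesis, these blocks equal $\mu(m^{(1)})$ and $\mu(m^{(2)})$. Their sum is $\mu(m)$ by \eqref{eq:mu-definition}.

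The only subtle point is bookkeeping. We must check that $\mu$, which is defined on nested tuples without reference to $k$, is applied consistently with the level-indexed recursion. We must also check that the leaf case at level $k\ge1$ is covered by the base argument, since $\mathfrak{T}(k)$ contains $\mathfrak{L}$. Both checks are immediate from the definitions, so there is no genuine obstacle.
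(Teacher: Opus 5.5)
Your proof is correct and follows the paper's argument: induction on the tree structure with the leaf case as the base, bijectivity in the branching case from the inductive hypothesis combined with concatenation, and additivity of $\mu$ splitting the sum into the two blocks. Your remark that concatenation is invertible (split after the first $\sigma(\gamma_1)$ entries) spells out a step the paper leaves implicit.
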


\begin{proof}
We argue by induction on the tree structure.

If $\gamma=\mathtt{c}$ or $\gamma=\mathtt{d}$, then
\[
N(k,\gamma)=\mathbb{Z}^{\nu},
\qquad
\sigma(\gamma)=1,
\]
and $\mathrm{Fl}_{k,\gamma}(m)=(m)$. Hence $\mathrm{Fl}_{k,\gamma}$ is a bijection
from $\mathbb{Z}^{\nu}$ onto $(\mathbb{Z}^{\nu})^1$, and
\[
\mu(m)=m=\sum_{j=1}^{1}m_j.
\]

Now let $k\geq 1$ and $\gamma=(\gamma_1,\gamma_2)\in(\mathfrak{T}(k-1))^2$.
Let
\[
m=(m^{(1)},m^{(2)})\in N(k,\gamma)
=
N(k-1,\gamma_1)\times N(k-1,\gamma_2).
\]
Write
\[
\mathrm{Fl}_{k-1,\gamma_1}(m^{(1)})
=
(m_1,\dots,m_{\sigma(\gamma_1)}),
\]
and
\[
\mathrm{Fl}_{k-1,\gamma_2}(m^{(2)})
=
(m_{\sigma(\gamma_1)+1},\dots,m_{\sigma(\gamma_1)+\sigma(\gamma_2)}).
\]
Since
\[
\sigma(\gamma)=\sigma(\gamma_1)+\sigma(\gamma_2),
\]
the recursive definition yields
\[
\mathrm{Fl}_{k,\gamma}(m)
=
(m_1,\dots,m_{\sigma(\gamma)}).
\]
By the induction hypothesis, both
\[
\mathrm{Fl}_{k-1,\gamma_1}:N(k-1,\gamma_1)\to(\mathbb{Z}^{\nu})^{\sigma(\gamma_1)}
\]
and
\[
\mathrm{Fl}_{k-1,\gamma_2}:N(k-1,\gamma_2)\to(\mathbb{Z}^{\nu})^{\sigma(\gamma_2)}
\]
are bijections. Hence $\mathrm{Fl}_{k,\gamma}$ is also a bijection onto
$(\mathbb{Z}^{\nu})^{\sigma(\gamma)}$, proving (1).

Moreover, by \eqref{eq:mu-definition},
\[
\mu(m)=\mu(m^{(1)})+\mu(m^{(2)}).
\]
Applying the induction hypothesis to $m^{(1)}$ and $m^{(2)}$, we obtain
\[
\mu(m^{(1)})=\sum_{j=1}^{\sigma(\gamma_1)}m_j,
\qquad
\mu(m^{(2)})=\sum_{j=\sigma(\gamma_1)+1}^{\sigma(\gamma)}m_j.
\]
Therefore,
\[
\mu(m)=\sum_{j=1}^{\sigma(\gamma)}m_j,
\]
which proves (2).
\end{proof}

Set
\[
a_{\mathtt c}(n):=c(n),
\qquad
a_{\mathtt d}(n):=d(n),
\qquad n\in\mathbb Z^\nu.
\]

We now introduce the coefficients appearing in the tree expansion.

For every $k\geq 0$ and every $n\in N(k,\mathtt{c})=\mathbb{Z}^{\nu}$, define
\begin{equation}\label{eq:base-c}
C(k,\mathtt{c})(n):=c(n),
\qquad
J(k,\mathtt{c})(t,n):=G_n(t).
\end{equation}
For every $k\geq 0$ and every $n\in N(k,\mathtt{d})=\mathbb{Z}^{\nu}$, define
\begin{equation}\label{eq:base-d}
C(k,\mathtt{d})(n):=d(n),
\qquad
J(k,\mathtt{d})(t,n):=K_n(t).
\end{equation}
For every $k\geq 1$, every $\gamma=(\gamma_1,\gamma_2)\in \big(\mathfrak{T}(k-1)\big)^2$,
and every $m=(m_1,m_2)\in N(k,\gamma)=N(k-1,\gamma_1)\times N(k-1,\gamma_2)$, define
\begin{align}
C(k,\gamma)(m)
&:= C(k-1,\gamma_1)(m_1)\,C(k-1,\gamma_2)(m_2), \label{eq:recursive-C}\\
J(k,\gamma)(t,m)
&:= -\int_0^t \Phi_{\mu(m)}(t-\tau)\,
J(k-1,\gamma_1)(\tau,m_1)\,
J(k-1,\gamma_2)(\tau,m_2)\,d\tau . \label{eq:recursive-J}
\end{align}

\begin{lemma}\label{lem:C-factorization}
For every $k\geq 0$, every $\gamma\in\mathfrak T(k)$, and every
$m\in N(k,\gamma)$, if
\[
\mathrm{Fl}_{k,\gamma}(m)=(m_1,\dots,m_{\sigma(\gamma)}),
\]
then
\begin{equation}\label{eq:C-factorization}
C(k,\gamma)(m)
=
\prod_{j=1}^{\sigma(\gamma)}
a_{\eta_{k,\gamma,j}}(m_j).
\end{equation}
\end{lemma}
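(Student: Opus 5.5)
We argue by induction on the tree structure, exactly parallel to the proof of Lemma~\ref{lem:flattening}, using the recursive definitions \eqref{eq:base-c}, \eqref{eq:base-d}, \eqref{eq:recursive-C} together with the recursive definitions of $\mathrm{Fl}_{k,\gamma}$ and $\eta_{k,\gamma}$.

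\emph{Base case.} For a leaf $\gamma\in\mathfrak L$ (at any level $k\ge 0$), we have $\sigma(\gamma)=1$ and $\mathrm{Fl}_{k,\gamma}(m)=(m)$.

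If $\gamma=\mathtt c$, then $\eta_{k,\mathtt c}=(\mathtt c)$, so the right-hand side of \eqref{eq:C-factorization} is $a_{\mathtt c}(m)=c(m)$. This equals $C(k,\mathtt c)(m)$ by \eqref{eq:base-c}.

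If $\gamma=\mathtt d$, then $\eta_{k,\mathtt d}=(\mathtt d)$, and the right-hand side is $a_{\mathtt d}(m)=d(m)=C(k,\mathtt d)(m)$ by \eqref{eq:base-d}.

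\emph{Inductive step.} Let $k\ge1$ and $\gamma=(\gamma_1,\gamma_2)\in(\mathfrak T(k-1))^2$, and assume the claim holds at level $k-1$ for $\gamma_1$ and $\gamma_2$. Take $m=(m^{(1)},m^{(2)})\in N(k-1,\gamma_1)\times N(k-1,\gamma_2)$.

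Since $\mathrm{Fl}_{k,\gamma}(m)$ is the concatenation of $\mathrm{Fl}_{k-1,\gamma_1}(m^{(1)})$ and $\mathrm{Fl}_{k-1,\gamma_2}(m^{(2)})$, the flattened tuple splits into two blocks:
\begin{itemize}
\item the first $\sigma(\gamma_1)$ entries are the flattening of $m^{(1)}$;
\item the remaining $\sigma(\gamma_2)$ entries are the flattening of $m^{(2)}$.
\end{itemize}
The labels split the same way: $\eta_{k,\gamma,j}=\eta_{k-1,\gamma_1,j}$ for $j\le\sigma(\gamma_1)$, and $\eta_{k,\gamma,j}=\eta_{k-1,\gamma_2,j-\sigma(\gamma_1)}$ for $j>\sigma(\gamma_1)$.

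By \eqref{eq:recursive-C}, $C(k,\gamma)(m)=C(k-1,\gamma_1)(m^{(1)})\,C(k-1,\gamma_2)(m^{(2)})$. Applying the induction hypothesis to each factor gives the product of $a_{\eta}$'s over the first block times the product over the second block. Since $\sigma(\gamma)=\sigma(\gamma_1)+\sigma(\gamma_2)$, this is exactly $\prod_{j=1}^{\sigma(\gamma)}a_{\eta_{k,\gamma,j}}(m_j)$, which is \eqref{eq:C-factorization}.

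The only point needing care is the index shift by $\sigma(\gamma_1)$ for the second subtree, keeping the labels $\eta$ and the frequencies $m_j$ aligned under concatenation. This is the same bookkeeping as in Lemma~\ref{lem:flattening}, so we expect no real obstacle.
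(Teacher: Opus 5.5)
Your proposal is correct and matches the paper's proof: induction on the tree structure, with the leaf cases $\mathtt c$ and $\mathtt d$ read off from \eqref{eq:base-c}--\eqref{eq:base-d}. The inductive step combines \eqref{eq:recursive-C} with the concatenation structure of $\mathrm{Fl}_{k,\gamma}$ and $\eta_{k,\gamma}$, shifting the second block's indices by $\sigma(\gamma_1)$, exactly as you describe.
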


\begin{proof}
We argue by induction on the tree structure.

If $\gamma=\mathtt{c}$, then $\sigma(\gamma)=1$,
\[
\mathrm{Fl}_{k,\mathtt c}(m)=(m),
\qquad
\eta_{k,\mathtt c}=(\mathtt c),
\]
and by \eqref{eq:base-c},
\[
C(k,\mathtt c)(m)=c(m)=a_{\mathtt c}(m)
=
\prod_{j=1}^{1} a_{\eta_{k,\mathtt c,j}}(m_j).
\]

If $\gamma=\mathtt{d}$, then similarly,
\[
\mathrm{Fl}_{k,\mathtt d}(m)=(m),
\qquad
\eta_{k,\mathtt d}=(\mathtt d),
\]
and by \eqref{eq:base-d},
\[
C(k,\mathtt d)(m)=d(m)=a_{\mathtt d}(m)
=
\prod_{j=1}^{1} a_{\eta_{k,\mathtt d,j}}(m_j).
\]

Now let $k\geq 1$ and $\gamma=(\gamma_1,\gamma_2)\in(\mathfrak T(k-1))^2$.
Let
\[
m=(m^{(1)},m^{(2)})\in N(k,\gamma)
=
N(k-1,\gamma_1)\times N(k-1,\gamma_2),
\]
and write
\[
\mathrm{Fl}_{k-1,\gamma_1}(m^{(1)})
=
(m_1,\dots,m_{\sigma(\gamma_1)}),
\]
\[
\mathrm{Fl}_{k-1,\gamma_2}(m^{(2)})
=
(m_{\sigma(\gamma_1)+1},\dots,m_{\sigma(\gamma)}).
\]
By \eqref{eq:recursive-C} and the induction hypothesis,
\begin{align*}
C(k,\gamma)(m)
&=
C(k-1,\gamma_1)(m^{(1)})\,C(k-1,\gamma_2)(m^{(2)}) \\
&=
\left(
\prod_{j=1}^{\sigma(\gamma_1)}
a_{\eta_{k-1,\gamma_1,j}}(m_j)
\right)
\left(
\prod_{j=1}^{\sigma(\gamma_2)}
a_{\eta_{k-1,\gamma_2,j}}(m_{\sigma(\gamma_1)+j})
\right).
\end{align*}
Since
\[
\eta_{k,\gamma}
=
\bigl(\eta_{k-1,\gamma_1},\,\eta_{k-1,\gamma_2}\bigr),
\]
the last identity is exactly \eqref{eq:C-factorization}.
\end{proof}

\subsection{Tree representation of the Picard sequence}\label{sec:tree}

In this subsection, we rewrite the Picard sequence
$\{c_k(t,n)\}_{k\geq 0}$ defined by \eqref{eq:picard-iteration-recall}
in terms of the combinatorial tree introduced in
Subsection~\ref{sec:tree-setup}.

The next lemma gives the desired tree representation of the Picard sequence.

\begin{lemma}\label{lem:tree-representation}
For every $k\geq 0$ and every $n\in \mathbb{Z}^{\nu}$, one has
\begin{equation}\label{eq:tree-representation}
c_k(t,n)
=
\sum_{\gamma\in \mathfrak{T}(k)}
\ \sum_{\substack{m\in N(k,\gamma)\\ \mu(m)=n}}
C(k,\gamma)(m)\,J(k,\gamma)(t,m).
\end{equation}
\end{lemma}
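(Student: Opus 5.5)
We argue by induction on $k$, working formally as announced at the beginning of Subsection~\ref{sec:tree-setup}; all rearrangements of sums and interchanges of summation and integration will later be justified by the absolute convergence estimates.

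\textbf{Base case $k=0$.} Here $\mathfrak T(0)=\{\mathtt c,\mathtt d\}$ and $N(0,\gamma)=\mathbb Z^\nu$ with $\mu(m)=m$. The constraint $\mu(m)=n$ therefore selects the single term $m=n$. By \eqref{eq:base-c} and \eqref{eq:base-d}, the right-hand side of \eqref{eq:tree-representation} equals $c(n)G_n(t)+d(n)K_n(t)=c_0(t,n)$.

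\textbf{Inductive step.} Assume \eqref{eq:tree-representation} holds for $k-1$. Since $\mathfrak T(k)=\mathfrak L\cup(\mathfrak T(k-1))^2$, we split the sum over $\gamma\in\mathfrak T(k)$ into two parts.

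The leaf part, $\gamma\in\{\mathtt c,\mathtt d\}$, reproduces $c_0(t,n)$ exactly as in the base case. We note that $C(k,\cdot)$ and $J(k,\cdot)$ on leaves do not depend on $k$.

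For the nonlinear part, we insert the induction hypothesis for both factors into \eqref{eq:picard-iteration-recall}. The product $c_{k-1}(\tau,n_1)c_{k-1}(\tau,n_2)$ expands, after distributing the two sums, into a sum over pairs $(\gamma_1,\gamma_2)\in(\mathfrak T(k-1))^2$ and over $m^{(1)}\in N(k-1,\gamma_1)$, $m^{(2)}\in N(k-1,\gamma_2)$ with $\mu(m^{(1)})=n_1$ and $\mu(m^{(2)})=n_2$. Summing over $n_1+n_2=n$ then removes the intermediate variables $n_1,n_2$. What remains is a sum over $\gamma=(\gamma_1,\gamma_2)$ and over $m=(m^{(1)},m^{(2)})\in N(k,\gamma)$ with $\mu(m)=\mu(m^{(1)})+\mu(m^{(2)})=n$, using \eqref{eq:mu-definition}.

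Next we write the prefactor as $-\beta(n)K_n(t-\tau)=-\Phi_n(t-\tau)=-\Phi_{\mu(m)}(t-\tau)$; this is valid also for $n=0$, since there $\beta(0)=0=\Phi_0$. We then pull the constant coefficients $C(k-1,\gamma_1)(m^{(1)})\,C(k-1,\gamma_2)(m^{(2)})=C(k,\gamma)(m)$ out of the time integral, using \eqref{eq:recursive-C}. The remaining integral is exactly $J(k,\gamma)(t,m)$ by \eqref{eq:recursive-J}. Adding the leaf part gives \eqref{eq:tree-representation} at level $k$.

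\textbf{Main obstacle.} The only delicate point is the reindexing. One must check that the map
\[
\bigl(n_1,n_2,(\gamma_1,m^{(1)}),(\gamma_2,m^{(2)})\bigr)\mapsto(\gamma,m)
\]
is a bijection onto $\{(\gamma,m):\gamma\in(\mathfrak T(k-1))^2,\ m\in N(k,\gamma),\ \mu(m)=n\}$. This holds because $n_1=\mu(m^{(1)})$ and $n_2=\mu(m^{(2)})$ are determined by $m$. One should also note that the decomposition $\mathfrak T(k)=\mathfrak L\cup(\mathfrak T(k-1))^2$ is disjoint, so that no tree is counted twice. Beyond this, the interchange of the infinite sums with the integral is only formal at this stage and is deferred, as stated, to Subsection~\ref{sec:uniform-decay}.
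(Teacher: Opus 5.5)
Your proposal is correct and follows essentially the same route as the paper. Both argue by induction on $k$: the leaf part reproduces $c_0$, and the nonlinear part comes from inserting the inductive hypothesis, rearranging formally, and applying \eqref{eq:recursive-C}--\eqref{eq:recursive-J}. Your explicit checks go slightly beyond the paper, which leaves them implicit: the reindexing bijection, the disjointness of $\mathfrak L$ and $(\mathfrak T(k-1))^2$, and the case $n=0$ via $\Phi_0=0$.
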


\begin{proof}
We argue formally; the validity of the rearrangements below will be justified later by
the absolute convergence estimates.

For $k=0$, since $\mathfrak{T}(0)=\mathfrak{L}=\{\mathtt{c},\mathtt{d}\}$, we have
\begin{align*}
\sum_{\gamma\in \mathfrak{T}(0)}
\ \sum_{\substack{m\in N(0,\gamma)\\ \mu(m)=n}}
C(0,\gamma)(m)\,J(0,\gamma)(t,m)
&=
C(0,\mathtt{c})(n)J(0,\mathtt{c})(t,n)
+
C(0,\mathtt{d})(n)J(0,\mathtt{d})(t,n) \\
&=
G_n(t)c(n)+K_n(t)d(n) \\
&=
c_0(t,n).
\end{align*}
Hence \eqref{eq:tree-representation} holds for $k=0$.

Now let $k\geq 1$ and assume that \eqref{eq:tree-representation} holds for $k-1$.
By \eqref{eq:picard-iteration-recall}, we have
\begin{align*}
c_k(t,n)-c_0(t,n)
&=
-\beta(n)\int_0^t K_n(t-\tau)
\sum_{\substack{n_1,n_2\in \mathbb{Z}^{\nu}\\ n_1+n_2=n}}
c_{k-1}(\tau,n_1)c_{k-1}(\tau,n_2)\,d\tau \\
&=
-\int_0^t \Phi_n(t-\tau)
\sum_{\substack{n_1,n_2\in \mathbb{Z}^{\nu}\\ n_1+n_2=n}}
c_{k-1}(\tau,n_1)c_{k-1}(\tau,n_2)\,d\tau .
\end{align*}
Substituting the induction hypothesis for each factor $c_{k-1}$, we obtain
\begin{align*}
c_k(t,n)-c_0(t,n)
&=
-\int_0^t \Phi_n(t-\tau)
\sum_{\substack{n_1,n_2\in \mathbb{Z}^{\nu}\\ n_1+n_2=n}}
\Bigg(
\sum_{\gamma_1\in \mathfrak{T}(k-1)}
\sum_{\substack{m_1\in N(k-1,\gamma_1)\\ \mu(m_1)=n_1}}
C(k-1,\gamma_1)(m_1)\,J(k-1,\gamma_1)(\tau,m_1)
\Bigg)
\\
&\qquad\qquad\qquad\qquad\times
\Bigg(
\sum_{\gamma_2\in \mathfrak{T}(k-1)}
\sum_{\substack{m_2\in N(k-1,\gamma_2)\\ \mu(m_2)=n_2}}
C(k-1,\gamma_2)(m_2)\,J(k-1,\gamma_2)(\tau,m_2)
\Bigg)\,d\tau .
\end{align*}
Rearranging the sums and using \eqref{eq:recursive-C}--\eqref{eq:recursive-J}, we get
\[
c_k(t,n)-c_0(t,n)
=
\sum_{\gamma\in (\mathfrak{T}(k-1))^2}
\ \sum_{\substack{m\in N(k,\gamma)\\ \mu(m)=n}}
C(k,\gamma)(m)\,J(k,\gamma)(t,m).
\]
Combining this with the representation of $c_0(t,n)$ coming from the two leaves
$\mathtt{c}$ and $\mathtt{d}$, we conclude that
\begin{align*}
c_k(t,n)
&=
\sum_{\gamma\in \mathfrak{L}}
\ \sum_{\substack{m\in N(k,\gamma)\\ \mu(m)=n}}
C(k,\gamma)(m)\,J(k,\gamma)(t,m)
\\
&\quad+
\sum_{\gamma\in (\mathfrak{T}(k-1))^2}
\ \sum_{\substack{m\in N(k,\gamma)\\ \mu(m)=n}}
C(k,\gamma)(m)\,J(k,\gamma)(t,m)
\\
&=
\sum_{\gamma\in \mathfrak{T}(k)}
\ \sum_{\substack{m\in N(k,\gamma)\\ \mu(m)=n}}
C(k,\gamma)(m)\,J(k,\gamma)(t,m).
\end{align*}
This proves \eqref{eq:tree-representation} for all $k\geq 0$.
\end{proof}

\begin{remark}\label{rem:tree-structure}
The combinatorial tree \eqref{eq:tree-representation} reflects precisely the structure
of the Picard iteration. The two leaves $\mathtt{c}$ and $\mathtt{d}$ encode the two
linear components generated by the initial data $(u(0,x),u_t(0,x))$, while each binary
branching corresponds to one application of the quadratic nonlinearity. The factor
$C(k,\gamma)$ records the multilinear products of the initial Fourier coefficients,
whereas $J(k,\gamma)$ collects both the propagators and the nonlinear weights.
\end{remark}

\subsection{Uniform exponential decay of the Picard sequence}\label{sec:uniform-decay}

In this subsection we prove a uniform exponential decay estimate for the Picard
sequence constructed in Subsection~\ref{sec:tree}. 


Throughout this subsection, we assume that the initial Fourier coefficients satisfy
\begin{equation}\label{eq:exp-decay-initial}
|c(n)|\leq A e^{-\rho |n|},
\qquad
|d(n)|\leq A e^{-\rho |n|},
\qquad n\in \mathbb{Z}^{\nu},
\end{equation}
where $A>0$ and $0<\rho\leq 1$. Set
\begin{equation*}
b_{\rho}:=(6\rho^{-1})^{\nu}.
\end{equation*}

Since $0\leq \Omega(n)<1$, the functions $G_n$, $K_n$, and $\Phi_n$ satisfy
\begin{equation}\label{eq:basic-propagator-bounds}
|G_n(t)|\leq 1,
\qquad
|K_n(t)|\leq t,
\qquad
|\Phi_n(t)|\leq 1,
\qquad n\in \mathbb{Z}^{\nu},\ t\geq 0.
\end{equation}

For the purpose of estimates, we now introduce three auxiliary indices. For every
$\gamma\in \bigcup_{k\geq 0}\mathfrak{T}(k)$, define recursively
\begin{equation*}
\sigma(\gamma):=
\begin{cases}
1, & \gamma=\mathtt{c}\ \text{or}\ \gamma=\mathtt{d},\\[1mm]
\sigma(\gamma_1)+\sigma(\gamma_2), & \gamma=(\gamma_1,\gamma_2),
\end{cases}
\end{equation*}
\begin{equation*}
\ell(\gamma):=
\begin{cases}
0, & \gamma=\mathtt{c},\\[1mm]
1, & \gamma=\mathtt{d},\\[1mm]
1+\ell(\gamma_1)+\ell(\gamma_2), & \gamma=(\gamma_1,\gamma_2),
\end{cases}
\end{equation*}
and
\begin{equation*}
D(\gamma):=
\begin{cases}
1, & \gamma=\mathtt{c}\ \text{or}\ \gamma=\mathtt{d},\\[1mm]
\ell(\gamma)\,D(\gamma_1)D(\gamma_2), & \gamma=(\gamma_1,\gamma_2).
\end{cases}
\end{equation*}
Here $\sigma(\gamma)$ is the number of leaves, $\ell(\gamma)$ is the effective time
degree, and $D(\gamma)$ is the denominator produced by repeated time integrations.

The next lemma gives the basic bounds for the tree coefficients.

\begin{lemma}\label{lem:CJ-estimates}
For every $k\geq 0$, every $\gamma\in \mathfrak{T}(k)$, and every $m\in N(k,\gamma)$,
one has
\begin{equation}\label{eq:C-bound}
|C(k,\gamma)(m)|\leq A^{\sigma(\gamma)} e^{-\rho |m|},
\end{equation}
and
\begin{equation}\label{eq:J-bound}
|J(k,\gamma)(t,m)|\leq \frac{t^{\ell(\gamma)}}{D(\gamma)},
\qquad t\geq 0.
\end{equation}
Here, after identifying $N(k,\gamma)$ with $(\mathbb{Z}^{\nu})^{\sigma(\gamma)}$, we write
\[
|m|:=\sum_{j=1}^{\sigma(\gamma)} |m_j|.
\]
\end{lemma}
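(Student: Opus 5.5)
Both bounds will be proved by induction on the tree structure, following the recursive definitions \eqref{eq:recursive-C} and \eqref{eq:recursive-J} exactly as in the proofs of Lemma~\ref{lem:flattening} and Lemma~\ref{lem:C-factorization}.

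\textbf{The bound \eqref{eq:C-bound}.} This is essentially immediate from Lemma~\ref{lem:C-factorization}. Writing $\mathrm{Fl}_{k,\gamma}(m)=(m_1,\dots,m_{\sigma(\gamma)})$, each factor satisfies $|a_{\eta_{k,\gamma,j}}(m_j)|\le Ae^{-\rho|m_j|}$ by \eqref{eq:exp-decay-initial}. Multiplying the $\sigma(\gamma)$ factors gives $A^{\sigma(\gamma)}e^{-\rho\sum_j|m_j|}$, which is the claim with the stated convention for $|m|$. Alternatively, one can induct directly: $|C(k,\gamma)(m)|\le A^{\sigma(\gamma_1)}e^{-\rho|m^{(1)}|}A^{\sigma(\gamma_2)}e^{-\rho|m^{(2)}|}$, and both $\sigma$ and $|m|$ are additive under concatenation.

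\textbf{The bound \eqref{eq:J-bound}, base cases.} These follow from \eqref{eq:basic-propagator-bounds}:
\begin{itemize}
\item for $\gamma=\mathtt c$, $|J|=|G_n(t)|\le 1=t^0/1$;
\item for $\gamma=\mathtt d$, $|J|=|K_n(t)|\le t=t^1/1$.
\end{itemize}
The inequality $|K_n(t)|\le t$ holds since $|\sin(\Omega t)|\le \Omega t$, and trivially when $n=0$.

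\textbf{The bound \eqref{eq:J-bound}, inductive step.} Take $\gamma=(\gamma_1,\gamma_2)$. Using $|\Phi_{\mu(m)}(t-\tau)|\le 1$ and the induction hypothesis for both children,
\[
|J(k,\gamma)(t,m)|\le \int_0^t \frac{\tau^{\ell(\gamma_1)+\ell(\gamma_2)}}{D(\gamma_1)D(\gamma_2)}\,d\tau
=\frac{t^{1+\ell(\gamma_1)+\ell(\gamma_2)}}{(1+\ell(\gamma_1)+\ell(\gamma_2))D(\gamma_1)D(\gamma_2)}
=\frac{t^{\ell(\gamma)}}{D(\gamma)},
\]
by the definitions of $\ell$ and $D$.

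The only point requiring care is the trees of depth at most $k$, where the inductive hypothesis is applied at level $k-1$. The bound is uniform in $k$, so the induction is really on the tree rather than on $k$, and this causes no issue. Nothing here is a genuine obstacle; the main point is simply to check that the bound $|\Phi|\le 1$ loses nothing, which holds because $\Omega<1$ and $\beta K_n=\Omega\sin(\Omega t)$.
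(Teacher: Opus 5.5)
Your proposal is correct and follows essentially the same route as the paper: induction on the tree structure, with the base cases coming from $|G_n(t)|\le 1$ and $|K_n(t)|\le t$, and the inductive step coming from $|\Phi_{\mu(m)}|\le 1$ together with $\int_0^t \tau^{\ell(\gamma_1)+\ell(\gamma_2)}\,d\tau$. The only cosmetic difference is that you obtain the $C$-bound mainly via Lemma~\ref{lem:C-factorization}, whereas the paper inducts directly using \eqref{eq:recursive-C}, which is the alternative you also mention.
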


\begin{proof}
We proceed by induction on the tree structure.

If $\gamma=\mathtt{c}$, then by \eqref{eq:base-c}, \eqref{eq:exp-decay-initial}, and
\eqref{eq:basic-propagator-bounds},
\[
|C(k,\mathtt{c})(n)|=|c(n)|\leq A e^{-\rho |n|}
= A^{\sigma(\mathtt{c})}e^{-\rho |n|},
\]
and
\[
|J(k,\mathtt{c})(t,n)|=|G_n(t)|\leq 1
=\frac{t^{\ell(\mathtt{c})}}{D(\mathtt{c})}.
\]

If $\gamma=\mathtt{d}$, then by \eqref{eq:base-d}, \eqref{eq:exp-decay-initial}, and
\eqref{eq:basic-propagator-bounds},
\[
|C(k,\mathtt{d})(n)|=|d(n)|\leq A e^{-\rho |n|}
= A^{\sigma(\mathtt{d})}e^{-\rho |n|},
\]
and
\[
|J(k,\mathtt{d})(t,n)|=|K_n(t)|\leq t
=\frac{t^{\ell(\mathtt{d})}}{D(\mathtt{d})}.
\]

Now let $k\geq 1$, let $\gamma=(\gamma_1,\gamma_2)\in (\mathfrak{T}(k-1))^2$, and let
$m=(m_1,m_2)\in N(k,\gamma)$. Assume that the estimates hold for $\gamma_1$ and
$\gamma_2$. By \eqref{eq:recursive-C} and the induction hypothesis,
\begin{align*}
|C(k,\gamma)(m)|
&=
|C(k-1,\gamma_1)(m_1)|\,|C(k-1,\gamma_2)(m_2)| \\
&\leq
A^{\sigma(\gamma_1)}e^{-\rho |m_1|}
A^{\sigma(\gamma_2)}e^{-\rho |m_2|} \\
&=
A^{\sigma(\gamma_1)+\sigma(\gamma_2)}e^{-\rho (|m_1|+|m_2|)} \\
&=
A^{\sigma(\gamma)} e^{-\rho |m|}.
\end{align*}
Similarly, by \eqref{eq:recursive-J}, \eqref{eq:basic-propagator-bounds}, and the
induction hypothesis,
\begin{align*}
|J(k,\gamma)(t,m)|
&\leq
\int_0^t
|\Phi_{\mu(m)}(t-\tau)|\,
|J(k-1,\gamma_1)(\tau,m_1)|\,
|J(k-1,\gamma_2)(\tau,m_2)|\,d\tau \\
&\leq
\int_0^t
\frac{\tau^{\ell(\gamma_1)}}{D(\gamma_1)}
\frac{\tau^{\ell(\gamma_2)}}{D(\gamma_2)}\,d\tau \\
&=
\frac{t^{1+\ell(\gamma_1)+\ell(\gamma_2)}}{
(1+\ell(\gamma_1)+\ell(\gamma_2))D(\gamma_1)D(\gamma_2)} \\
&=
\frac{t^{\ell(\gamma)}}{D(\gamma)}.
\end{align*}
This proves \eqref{eq:C-bound} and \eqref{eq:J-bound}.
\end{proof}

Next we collect several elementary combinatorial facts.

\begin{lemma}\label{lem:combinatorics}
For every $k\geq 0$ and every $\gamma\in \mathfrak{T}(k)$, the following statements hold:
\begin{enumerate}
\item
\begin{equation}\label{eq:sigma-ell}
\sigma(\gamma)\leq \ell(\gamma)+1.
\end{equation}

\item
If $0\leq \xi\leq \frac15$, then
\begin{equation}\label{eq:tree-sum}
\sum_{\gamma\in \mathfrak{T}(k)} \frac{\xi^{\ell(\gamma)}}{D(\gamma)} \leq 2,
\qquad \forall\, k\geq 0.
\end{equation}
\end{enumerate}
\end{lemma}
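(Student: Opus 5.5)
For \eqref{eq:sigma-ell} I would use structural induction on $\gamma$, as in Lemmas~\ref{lem:flattening} and \ref{lem:C-factorization}. For a leaf $\gamma\in\{\mathtt c,\mathtt d\}$ we have $\sigma(\gamma)=1$ and $\ell(\gamma)\in\{0,1\}$, so $\sigma(\gamma)\le \ell(\gamma)+1$. For $\gamma=(\gamma_1,\gamma_2)$ the induction hypothesis gives
\[
\sigma(\gamma)=\sigma(\gamma_1)+\sigma(\gamma_2)\le \ell(\gamma_1)+\ell(\gamma_2)+2=\ell(\gamma)+1 .
\]

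For \eqref{eq:tree-sum} I would argue by induction on $k$, controlling $S_k(\xi):=\sum_{\gamma\in\mathfrak T(k)}\xi^{\ell(\gamma)}/D(\gamma)$, which is a finite sum because each $\mathfrak T(k)$ is finite. The first step is to note that $\mathfrak T(k)=\mathfrak L\cup(\mathfrak T(k-1))^2$ is a disjoint union, since leaves are not ordered pairs. Hence $S_k$ splits into a leaf part and a pair part. The leaf part equals $\xi^0/1+\xi^1/1=1+\xi$.

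The key pointwise inequality concerns a pair $\gamma=(\gamma_1,\gamma_2)$. Since $\ell(\gamma)=1+\ell(\gamma_1)+\ell(\gamma_2)\ge 1$, we get
\[
\frac{\xi^{\ell(\gamma)}}{D(\gamma)}=\frac{\xi\,\xi^{\ell(\gamma_1)}\xi^{\ell(\gamma_2)}}{\ell(\gamma)D(\gamma_1)D(\gamma_2)}\le \xi\,\frac{\xi^{\ell(\gamma_1)}}{D(\gamma_1)}\cdot\frac{\xi^{\ell(\gamma_2)}}{D(\gamma_2)} .
\]
Summing over $(\gamma_1,\gamma_2)\in(\mathfrak T(k-1))^2$, the right-hand side factorizes, and this yields the recursive bound
\[
S_k(\xi)\le 1+\xi+\xi\,S_{k-1}(\xi)^2 .
\]

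To close the induction, start from $S_0(\xi)=1+\xi\le 6/5\le 2$. If $S_{k-1}(\xi)\le 2$, then
\[
S_k(\xi)\le 1+\xi+4\xi=1+5\xi\le 2
\]
precisely when $\xi\le 1/5$. This is exactly where the threshold $\frac15$ comes from. There is no genuine obstacle here. The only point needing care is the disjointness of the decomposition of $\mathfrak T(k)$ together with the factorization of the double sum over pairs; the crude bound $1/\ell(\gamma)\le 1$ already suffices, and the denominators $D(\gamma)$ are not needed beyond that.
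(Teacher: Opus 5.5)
Your proof is correct and follows the paper's argument essentially step for step: structural induction for \eqref{eq:sigma-ell}, and for \eqref{eq:tree-sum} the recursion $S_k(\xi)\le 1+\xi+\xi S_{k-1}(\xi)^2$ obtained from $1/\ell(\gamma)\le 1$, closed by $1+5\xi\le 2$. Your remark that $\mathfrak T(k)=\mathfrak L\cup(\mathfrak T(k-1))^2$ is a disjoint union makes explicit a point the paper uses without comment.
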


\begin{proof}
We first prove \eqref{eq:sigma-ell} by induction on the tree structure.
For the leaves, we have
\[
\sigma(\mathtt{c})=1\leq 0+1=\ell(\mathtt{c})+1,
\qquad
\sigma(\mathtt{d})=1\leq 1+1=\ell(\mathtt{d})+1.
\]
If $\gamma=(\gamma_1,\gamma_2)$ and the claim holds for $\gamma_1,\gamma_2$, then
\[
\sigma(\gamma)
=\sigma(\gamma_1)+\sigma(\gamma_2)
\leq (\ell(\gamma_1)+1)+(\ell(\gamma_2)+1)
=\ell(\gamma)+1.
\]
Hence \eqref{eq:sigma-ell} holds for all $\gamma$.

Finally, define
\[
\Theta_k(\xi):=
\sum_{\gamma\in \mathfrak{T}(k)} \frac{\xi^{\ell(\gamma)}}{D(\gamma)}.
\]
For $k=0$, since $\mathfrak{T}(0)=\{\mathtt{c},\mathtt{d}\}$, we have
\[
\Theta_0(\xi)
=
\frac{\xi^{\ell(\mathtt{c})}}{D(\mathtt{c})}
+
\frac{\xi^{\ell(\mathtt{d})}}{D(\mathtt{d})}
=1+\xi
\leq 2.
\]
Now let $k\geq 1$ and assume that $\Theta_{k-1}(\xi)\leq 2$. Since
\[
\mathfrak{T}(k)=\mathfrak{L}\cup \big(\mathfrak{T}(k-1)\big)^2,
\]
we obtain
\begin{align*}
\Theta_k(\xi)
&=
\sum_{\gamma\in \mathfrak{L}} \frac{\xi^{\ell(\gamma)}}{D(\gamma)}
+
\sum_{\gamma=(\gamma_1,\gamma_2)\in (\mathfrak{T}(k-1))^2}
\frac{\xi^{\ell(\gamma)}}{D(\gamma)} \\
&=
1+\xi
+
\sum_{\gamma_1,\gamma_2\in \mathfrak{T}(k-1)}
\frac{\xi^{1+\ell(\gamma_1)+\ell(\gamma_2)}}{
\ell(\gamma)\,D(\gamma_1)D(\gamma_2)} \\
&\leq
1+\xi
+
\xi
\sum_{\gamma_1,\gamma_2\in \mathfrak{T}(k-1)}
\frac{\xi^{\ell(\gamma_1)}}{D(\gamma_1)}
\frac{\xi^{\ell(\gamma_2)}}{D(\gamma_2)} \\
&=
1+\xi+\xi \Theta_{k-1}(\xi)^2 \\
&\leq
1+\xi+4\xi \\
&\leq 2,
\end{align*}
provided that $0\leq \xi\leq \frac15$. This proves \eqref{eq:tree-sum}.
\end{proof}

\begin{lemma}\label{lem:uniform-exp-decay}
Assume \eqref{eq:exp-decay-initial}. Set
\begin{equation}\label{eq:M-B-L}
M:=\max\{1,Ab_{\rho}\},
\qquad
B:=2M,
\qquad
L:=\frac{1}{5M}.
\end{equation}
Then, for every $n\in \mathbb{Z}^{\nu}$,
\begin{equation}\label{eq:uniform-exp-decay}
\sup_{0\leq t\leq L}\ \sup_{k\geq 0}\ |c_k(t,n)|
\leq
B e^{-\frac{\rho}{2}|n|}.
\end{equation}
\end{lemma}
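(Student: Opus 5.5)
Starting from the tree representation of Lemma~\ref{lem:tree-representation}, we bound each term by Lemma~\ref{lem:CJ-estimates} and then sum. For fixed $k$, $n$, and $0\le t\le L$,
\[
|c_k(t,n)|\le \sum_{\gamma\in\mathfrak T(k)} A^{\sigma(\gamma)}\frac{t^{\ell(\gamma)}}{D(\gamma)}\sum_{\substack{m\in N(k,\gamma)\\ \mu(m)=n}} e^{-\rho|m|}.
\]
By Lemma~\ref{lem:flattening}, the inner sum equals the sum over $(m_1,\dots,m_{\sigma})\in(\mathbb Z^\nu)^{\sigma(\gamma)}$ with $\sum_j m_j=n$ of $e^{-\rho\sum_j|m_j|}$. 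The triangle inequality gives $\sum_j|m_j|\ge |n|$, so we split $e^{-\rho\sum|m_j|}\le e^{-\frac{\rho}{2}|n|}\prod_j e^{-\frac{\rho}{2}|m_j|}$. Dropping the constraint then yields the bound $e^{-\frac\rho2|n|}\bigl(\sum_{m\in\mathbb Z^\nu}e^{-\frac\rho2|m|}\bigr)^{\sigma(\gamma)}$.

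Next we verify the elementary lattice bound $\sum_{m\in\mathbb Z^\nu}e^{-\frac\rho2|m|}\le b_\rho$. The sum factorizes as $\bigl(\sum_{j\in\mathbb Z}e^{-\frac\rho2|j|}\bigr)^\nu=\bigl(\coth(\rho/4)\bigr)^\nu$. Since $\coth x\le 1+1/x$ and $0<\rho\le1$, we get $\coth(\rho/4)\le 1+4/\rho\le 5/\rho\le 6/\rho$. Consequently each tree contributes at most $(Ab_\rho)^{\sigma(\gamma)}t^{\ell(\gamma)}/D(\gamma)\,e^{-\frac\rho2|n|}$.

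The remaining step, and the only one with any subtlety, is to convert the power $(Ab_\rho)^{\sigma(\gamma)}$ into a power matching $t^{\ell(\gamma)}$ so that Lemma~\ref{lem:combinatorics} applies. Since $Ab_\rho\le M$, $M\ge1$, and $\sigma(\gamma)\le\ell(\gamma)+1$ by \eqref{eq:sigma-ell}, we have $(Ab_\rho)^{\sigma(\gamma)}\le M^{\sigma(\gamma)}\le M^{\ell(\gamma)+1}$. Here $M\ge1$ is exactly what allows raising the exponent. Therefore
\[
|c_k(t,n)|\le M e^{-\frac\rho2|n|}\sum_{\gamma\in\mathfrak T(k)}\frac{(Mt)^{\ell(\gamma)}}{D(\gamma)}.
\]
For $t\le L=1/(5M)$ we have $\xi:=Mt\le 1/5$, so \eqref{eq:tree-sum} bounds the tree sum by $2$. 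This gives $|c_k(t,n)|\le 2Me^{-\frac\rho2|n|}=Be^{-\frac\rho2|n|}$, uniformly in $k$ and $t\in[0,L]$.

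Finally, the rearrangements used in Lemma~\ref{lem:tree-representation} must be legitimate. The same estimates, with absolute values inside, show that the full multiple series converges absolutely: for each fixed $k$, $\mathfrak T(k)$ is finite and each lattice sum is dominated as above. Fubini/Tonelli therefore justifies interchanging the sums and the time integral inductively in $k$, and this closes the argument.
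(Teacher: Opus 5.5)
Your proof is correct and follows essentially the same route as the paper: the tree representation, the bounds of Lemma~\ref{lem:CJ-estimates}, the splitting $e^{-\rho|m|}\le e^{-\frac{\rho}{2}|m|}e^{-\frac{\rho}{2}|n|}$ with the constraint $\mu(m)=n$ then dropped, the lattice factor $b_\rho^{\sigma(\gamma)}$, the inequality $\sigma(\gamma)\le\ell(\gamma)+1$ combined with $M\ge1$, and the tree-sum bound \eqref{eq:tree-sum} at $\xi=Mt\le\frac15$. Your additions are the explicit check $\sum_{j\in\mathbb Z}e^{-\frac{\rho}{2}|j|}=\coth(\rho/4)\le 6\rho^{-1}$, which the paper only asserts, and the inductive Tonelli/Fubini justification of the rearrangements, which the paper leaves to the later absolute convergence estimates.
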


\begin{proof}
Fix $k\geq 0$, $0\leq t\leq L$, and $n\in \mathbb{Z}^{\nu}$. By the tree representation
\eqref{eq:tree-representation} and Lemma~\ref{lem:CJ-estimates},
\begin{align*}
|c_k(t,n)|
&\leq
\sum_{\gamma\in \mathfrak{T}(k)}
\ \sum_{\substack{m\in N(k,\gamma)\\ \mu(m)=n}}
|C(k,\gamma)(m)|\,|J(k,\gamma)(t,m)| \\
&\leq
\sum_{\gamma\in \mathfrak{T}(k)}
\ \sum_{\substack{m\in N(k,\gamma)\\ \mu(m)=n}}
A^{\sigma(\gamma)} e^{-\rho |m|}
\frac{t^{\ell(\gamma)}}{D(\gamma)} .
\end{align*}
Since $\mu(m)=n$, we have $|n|=|\mu(m)|\leq |m|$, and hence
\[
e^{-\rho |m|}
=
e^{-\frac{\rho}{2}|m|}e^{-\frac{\rho}{2}|m|}
\leq
e^{-\frac{\rho}{2}|m|}e^{-\frac{\rho}{2}|n|}.
\]
Therefore,
\begin{align*}
|c_k(t,n)|
&\leq
\sum_{\gamma\in \mathfrak{T}(k)}
\frac{A^{\sigma(\gamma)} t^{\ell(\gamma)}}{D(\gamma)}
\sum_{m\in N(k,\gamma)}
e^{-\frac{\rho}{2}|m|}\,e^{-\frac{\rho}{2}|n|}.
\end{align*}
By Lemma~\ref{lem:flattening}, $N(k,\gamma)$ can be identified with
$(\mathbb{Z}^{\nu})^{\sigma(\gamma)}$, and hence
\[
\sum_{m\in N(k,\gamma)} e^{-\frac{\rho}{2}|m|}
=
\left(
\sum_{q\in \mathbb{Z}^{\nu}} e^{-\frac{\rho}{2}|q|}
\right)^{\sigma(\gamma)}.
\]
Using the one-dimensional estimate
\[
\sum_{m\in \mathbb{Z}} e^{-\frac{\rho}{2}|m|}\leq 6\rho^{-1},
\]
we obtain
\[
\sum_{q\in \mathbb{Z}^{\nu}} e^{-\frac{\rho}{2}|q|}
=
\left(\sum_{m\in \mathbb{Z}} e^{-\frac{\rho}{2}|m|}\right)^{\nu}
\leq
(6\rho^{-1})^{\nu}
=
b_{\rho}.
\]
Hence
\begin{align*}
|c_k(t,n)|
&\leq
\sum_{\gamma\in \mathfrak{T}(k)}
\frac{(Ab_{\rho})^{\sigma(\gamma)} t^{\ell(\gamma)}}{D(\gamma)}
e^{-\frac{\rho}{2}|n|}.
\end{align*}
Now, since $Ab_{\rho}\leq M$ and $\sigma(\gamma)\leq \ell(\gamma)+1$, we find
\[
(Ab_{\rho})^{\sigma(\gamma)} t^{\ell(\gamma)}
\leq
M^{\sigma(\gamma)} t^{\ell(\gamma)}
\leq
M^{\ell(\gamma)+1} t^{\ell(\gamma)}
=
M(Mt)^{\ell(\gamma)}.
\]
Therefore,
\[
|c_k(t,n)|
\leq
M
\sum_{\gamma\in \mathfrak{T}(k)}
\frac{(Mt)^{\ell(\gamma)}}{D(\gamma)}
e^{-\frac{\rho}{2}|n|}.
\]
Since $0\leq t\leq L=\frac{1}{5M}$, we have $0\leq Mt\leq \frac15$. By
\eqref{eq:tree-sum}, it follows that
\[
|c_k(t,n)|
\leq
2M e^{-\frac{\rho}{2}|n|}
=
B e^{-\frac{\rho}{2}|n|}.
\]
This proves \eqref{eq:uniform-exp-decay}.
\end{proof}

\begin{remark}\label{rem:constants}
The constants in \eqref{eq:M-B-L} are not optimized. Their role is merely to provide
a convenient uniform estimate without separating the cases $Ab_{\rho}\geq 1$ and
$Ab_{\rho}<1$. 
\end{remark}

\subsection{Convergence of the Picard sequence}\label{sec:cauchy}

We now prove that the Picard sequence $\{c_k(t,n)\}_{k\geq 0}$ converges on
$[0,L]\times \mathbb{Z}^{\nu}$, where $L$ is the time scale introduced in
Lemma~\ref{lem:uniform-exp-decay}. The key point is to establish an explicit bound
for the difference $c_k(t,n)-c_{k-1}(t,n)$.

For every $m\geq 2$ and every $n\in \mathbb{Z}^{\nu}$, define
\begin{equation}\label{eq:Em-definition}
\mathcal{E}_m(n)
:=
\sum_{\substack{n_1,\dots,n_m\in \mathbb{Z}^{\nu}\\ n_1+\cdots+n_m=n}}
\prod_{j=1}^{m} e^{-\frac{\rho}{2}|n_j|}.
\end{equation}

The following lemma is the analogue of the convergence estimate in the first-order case.

\begin{lemma}\label{lem:cauchy-estimate}
Let $B$ and $L$ be as in \eqref{eq:M-B-L}. Then, for every $k\geq 1$, every
$0\leq t\leq L$, and every $n\in \mathbb{Z}^{\nu}$, one has
\begin{equation}\label{eq:cauchy-estimate-1}
|c_k(t,n)-c_{k-1}(t,n)|
\leq
\frac{2^{k-1}B^{k+1}t^k}{k!}\,\mathcal{E}_{k+1}(n).
\end{equation}
Moreover, setting
\begin{equation*}
\widetilde b_{\rho}:=(12\rho^{-1})^{\nu},
\end{equation*}
we also have
\begin{equation}\label{eq:cauchy-estimate-2}
|c_k(t,n)-c_{k-1}(t,n)|
\leq
\frac{B\widetilde b_{\rho}}{2}\,
\frac{(2B\widetilde b_{\rho}t)^k}{k!}\,
e^{-\frac{\rho}{4}|n|}.
\end{equation}
In particular, $\{c_k(t,n)\}_{k\geq 0}$ is a Cauchy sequence on
$[0,L]\times \mathbb{Z}^{\nu}$.
\end{lemma}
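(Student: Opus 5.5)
I would prove \eqref{eq:cauchy-estimate-1} by induction on $k$, working directly with the Picard recursion \eqref{eq:picard-iteration-recall} rather than the full tree expansion. Subtracting two consecutive iterates gives
\[
c_k(t,n)-c_{k-1}(t,n)
=
-\int_0^t \Phi_n(t-\tau)\!\!\sum_{n_1+n_2=n}\!\!\Bigl[(c_{k-1}-c_{k-2})(\tau,n_1)\,c_{k-1}(\tau,n_2)+c_{k-2}(\tau,n_1)\,(c_{k-1}-c_{k-2})(\tau,n_2)\Bigr]d\tau ,
\]
with the convention $c_{-1}:=0$ for the base case. For $k=1$ this is just the quadratic term of $c_0$. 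There I use $|\Phi_n|\le1$ and $|c_0(\tau,n_j)|\le |c(n_j)|+\tau|d(n_j)|$. The bound $|c_0(\tau,n_j)|\le Be^{-\frac\rho2|n_j|}$ then follows from Lemma~\ref{lem:uniform-exp-decay}, or directly from $A(1+L)\le B$. This yields $|c_1-c_0|\le B^2t\,\mathcal E_2(n)$, matching \eqref{eq:cauchy-estimate-1} with constant $2^0/1!$.

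For the inductive step, Lemma~\ref{lem:uniform-exp-decay} bounds the undifferenced factors by $Be^{-\frac\rho2|n_j|}$. The induction hypothesis bounds the difference factor by $\frac{2^{k-2}B^{k}\tau^{k-1}}{(k-1)!}\mathcal E_k(n_1)$. Summing over $n_1+n_2=n$ I need the convolution identity
\[
\sum_{n_1+n_2=n}\mathcal E_k(n_1)\,e^{-\frac\rho2|n_2|}=\mathcal E_{k+1}(n),
\]
which is immediate from \eqref{eq:Em-definition} by concatenating the summation variables, in the spirit of Lemma~\ref{lem:flattening}. The two symmetric terms give the factor $2$, and $\int_0^t\tau^{k-1}d\tau=t^k/k$ gives the $k!$. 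Altogether this yields $\frac{2^{k-1}B^{k+1}t^k}{k!}\mathcal E_{k+1}(n)$. All rearrangements are justified by absolute convergence, since every sum involved is dominated by exponentially decaying summable series.

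For \eqref{eq:cauchy-estimate-2}, I estimate $\mathcal E_{k+1}(n)$ by splitting each weight as $e^{-\frac\rho2|n_j|}=e^{-\frac\rho4|n_j|}e^{-\frac\rho4|n_j|}$. Then $\sum_j|n_j|\ge|n|$ extracts $e^{-\frac\rho4|n|}$. The remaining unconstrained sum is $\bigl(\sum_{q\in\mathbb Z^\nu}e^{-\frac\rho4|q|}\bigr)^{k+1}\le\widetilde b_\rho^{\,k+1}$, using the one-dimensional bound $\sum_{m\in\mathbb Z}e^{-\frac\rho4|m|}\le 12\rho^{-1}$ (valid since $\rho\le1$, as in the proof of Lemma~\ref{lem:uniform-exp-decay}). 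Substituting gives
\[
\frac{2^{k-1}B^{k+1}\widetilde b_\rho^{\,k+1}t^k}{k!}e^{-\frac\rho4|n|}
=\frac{B\widetilde b_\rho}{2}\frac{(2B\widetilde b_\rho t)^k}{k!}e^{-\frac\rho4|n|}.
\]

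Finally, the Cauchy property follows because $\sum_k \frac{(2B\widetilde b_\rho L)^k}{k!}<\infty$. So the series $\sum_k(c_k-c_{k-1})$ converges uniformly on $[0,L]\times\mathbb Z^\nu$, even with the weight $e^{\frac\rho4|n|}$. Note that the factorial makes this work on all of $[0,L]$ with no smallness needed.

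The main obstacle I expect is the base case and bookkeeping of the constant $2^{k-1}$. I must check that the factor $2$ from the two symmetric difference terms appears only from $k\ge2$, so that the base case fits exactly, and that the convolution identity for $\mathcal E$ is applied correctly. Everything else is routine.
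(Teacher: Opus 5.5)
Your proposal is correct and follows the paper's proof essentially step for step. The paper also argues by induction on $k$, splitting $ab-a'b'$ into two difference terms, bounding the undifferenced factors by Lemma~\ref{lem:uniform-exp-decay}, and using the concatenation identity $\sum_{n_1+n_2=n}\mathcal{E}_k(n_1)e^{-\frac{\rho}{2}|n_2|}=\mathcal{E}_{k+1}(n)$; it then splits $e^{-\frac{\rho}{2}|n_j|}=e^{-\frac{\rho}{4}|n_j|}e^{-\frac{\rho}{4}|n_j|}$ to extract $e^{-\frac{\rho}{4}|n|}$ and $\widetilde b_\rho^{\,k+1}$, and your minor variations (the convention $c_{-1}:=0$ and the optional direct check $A(1+L)\le B$ for $c_0$) do not change the argument.
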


\begin{proof}
We proceed by induction on $k$.

For $k=1$, by \eqref{eq:picard-iteration-recall}, \eqref{eq:Phi-definition}, and
Lemma~\ref{lem:uniform-exp-decay}, we have
\begin{align*}
|c_1(t,n)-c_0(t,n)|
&=
\left|
\int_0^t \Phi_n(t-\tau)
\sum_{\substack{n_1,n_2\in \mathbb{Z}^{\nu}\\ n_1+n_2=n}}
c_0(\tau,n_1)c_0(\tau,n_2)\,d\tau
\right| \\
&\leq
\int_0^t
\sum_{\substack{n_1,n_2\in \mathbb{Z}^{\nu}\\ n_1+n_2=n}}
|c_0(\tau,n_1)|\,|c_0(\tau,n_2)|\,d\tau \\
&\leq
B^2 t
\sum_{\substack{n_1,n_2\in \mathbb{Z}^{\nu}\\ n_1+n_2=n}}
e^{-\frac{\rho}{2}|n_1|}e^{-\frac{\rho}{2}|n_2|}.
\end{align*}
This is exactly \eqref{eq:cauchy-estimate-1} for $k=1$.

Now let $k\geq 2$ and assume that \eqref{eq:cauchy-estimate-1} holds for $k-1$.
Using \eqref{eq:picard-iteration-recall}, we obtain
\begin{align*}
&|c_k(t,n)-c_{k-1}(t,n)| \\
&\qquad=
\left|
\int_0^t \Phi_n(t-\tau)
\sum_{\substack{n_1,n_2\in \mathbb{Z}^{\nu}\\ n_1+n_2=n}}
\Big(
c_{k-1}(\tau,n_1)c_{k-1}(\tau,n_2)
-
c_{k-2}(\tau,n_1)c_{k-2}(\tau,n_2)
\Big)\,d\tau
\right| \\
&\qquad\leq
\int_0^t
\sum_{\substack{n_1,n_2\in \mathbb{Z}^{\nu}\\ n_1+n_2=n}}
\big|
c_{k-1}(\tau,n_1)c_{k-1}(\tau,n_2)
-
c_{k-2}(\tau,n_1)c_{k-2}(\tau,n_2)
\big|\,d\tau \\
&\qquad\leq
\int_0^t
\sum_{\substack{n_1,n_2\in \mathbb{Z}^{\nu}\\ n_1+n_2=n}}
|c_{k-1}(\tau,n_1)|\,|c_{k-1}(\tau,n_2)-c_{k-2}(\tau,n_2)|\,d\tau
\\
&\qquad\quad+
\int_0^t
\sum_{\substack{n_1,n_2\in \mathbb{Z}^{\nu}\\ n_1+n_2=n}}
|c_{k-1}(\tau,n_1)-c_{k-2}(\tau,n_1)|\,|c_{k-2}(\tau,n_2)|\,d\tau \\
&\qquad=: (I)+(II).
\end{align*}
Since $0\leq \tau\leq t\leq L$, Lemma~\ref{lem:uniform-exp-decay} applies at time
$\tau$.

For the first term, by Lemma~\ref{lem:uniform-exp-decay} and the induction hypothesis,
\begin{align*}
(I)
&\leq
\int_0^t
\sum_{\substack{n_1,n_2\in \mathbb{Z}^{\nu}\\ n_1+n_2=n}}
B e^{-\frac{\rho}{2}|n_1|}
\cdot
\frac{2^{k-2}B^k\tau^{k-1}}{(k-1)!}
\sum_{\substack{m_1,\dots,m_k\in \mathbb{Z}^{\nu}\\ m_1+\cdots+m_k=n_2}}
\prod_{j=1}^{k} e^{-\frac{\rho}{2}|m_j|}
\,d\tau \\
&=
\frac{2^{k-2}B^{k+1}}{(k-1)!}
\int_0^t \tau^{k-1}\,d\tau
\sum_{\substack{n_1,n_2\in \mathbb{Z}^{\nu}\\ n_1+n_2=n}}
\sum_{\substack{m_1,\dots,m_k\in \mathbb{Z}^{\nu}\\ m_1+\cdots+m_k=n_2}}
e^{-\frac{\rho}{2}|n_1|}
\prod_{j=1}^{k} e^{-\frac{\rho}{2}|m_j|} \\
&=
\frac{2^{k-2}B^{k+1}t^k}{k!}\,\mathcal{E}_{k+1}(n).
\end{align*}
By the same argument,
\[
(II)\leq \frac{2^{k-2}B^{k+1}t^k}{k!}\,\mathcal{E}_{k+1}(n).
\]
Hence
\[
|c_k(t,n)-c_{k-1}(t,n)|
\leq
(I)+(II)
\leq
\frac{2^{k-1}B^{k+1}t^k}{k!}\,\mathcal{E}_{k+1}(n),
\]
which proves \eqref{eq:cauchy-estimate-1} for all $k\geq 1$.

We next prove \eqref{eq:cauchy-estimate-2}. By the triangle inequality,
\[
|n|
=
\left| \sum_{j=1}^{k+1} n_j \right|
\leq
\sum_{j=1}^{k+1} |n_j|,
\]
and therefore
\[
\prod_{j=1}^{k+1} e^{-\frac{\rho}{2}|n_j|}
=
\prod_{j=1}^{k+1} e^{-\frac{\rho}{4}|n_j|}
\cdot
\prod_{j=1}^{k+1} e^{-\frac{\rho}{4}|n_j|}
\leq
\left(
\prod_{j=1}^{k+1} e^{-\frac{\rho}{4}|n_j|}
\right)
e^{-\frac{\rho}{4}|n|}.
\]
Using this in \eqref{eq:Em-definition}, we find
\begin{align*}
\mathcal{E}_{k+1}(n)
&\leq
\sum_{n_1,\dots,n_{k+1}\in \mathbb{Z}^{\nu}}
\prod_{j=1}^{k+1} e^{-\frac{\rho}{4}|n_j|}
\cdot
e^{-\frac{\rho}{4}|n|} \\
&=
\left(
\sum_{q\in \mathbb{Z}^{\nu}} e^{-\frac{\rho}{4}|q|}
\right)^{k+1}
e^{-\frac{\rho}{4}|n|}.
\end{align*}
By the one-dimensional estimate used in the previous subsection,
\[
\sum_{m\in \mathbb{Z}} e^{-\frac{\rho}{4}|m|}\leq 12\rho^{-1},
\]
and hence
\[
\sum_{q\in \mathbb{Z}^{\nu}} e^{-\frac{\rho}{4}|q|}
=
\left(
\sum_{m\in \mathbb{Z}} e^{-\frac{\rho}{4}|m|}
\right)^\nu
\leq
(12\rho^{-1})^\nu
=
\widetilde b_{\rho}.
\]
Thus,
\[
\mathcal{E}_{k+1}(n)
\leq
\widetilde b_{\rho}^{\,k+1} e^{-\frac{\rho}{4}|n|}.
\]
Substituting this into \eqref{eq:cauchy-estimate-1}, we obtain
\begin{align*}
|c_k(t,n)-c_{k-1}(t,n)|
&\leq
\frac{2^{k-1}B^{k+1}t^k}{k!}\,
\widetilde b_{\rho}^{\,k+1}
e^{-\frac{\rho}{4}|n|} \\
&=
\frac{B\widetilde b_{\rho}}{2}\,
\frac{(2B\widetilde b_{\rho}t)^k}{k!}\,
e^{-\frac{\rho}{4}|n|},
\end{align*}
which is exactly \eqref{eq:cauchy-estimate-2}.

Finally, for $k\geq 1$, define
\[
M_k:=\frac{B\widetilde b_{\rho}}{2}\,
\frac{(2B\widetilde b_{\rho}L)^k}{k!}.
\]
Then, by \eqref{eq:cauchy-estimate-2}, for every $0\leq t\leq L$ and every
$n\in \mathbb{Z}^{\nu}$,
\[
|c_k(t,n)-c_{k-1}(t,n)|\leq M_k.
\]
It follows from
\[
\sum_{k=1}^{\infty} M_k
=
\frac{B\widetilde b_{\rho}}{2}
\sum_{k=1}^{\infty}
\frac{(2B\widetilde b_{\rho}L)^k}{k!}
<\infty,
\]
that the series
\[
\sum_{k=1}^{\infty} \big(c_k(t,n)-c_{k-1}(t,n)\big)
\]
converges uniformly on $[0,L]\times \mathbb{Z}^{\nu}$. Equivalently,
$\{c_k(t,n)\}_{k\geq 0}$ is a Cauchy sequence on $[0,L]\times \mathbb{Z}^{\nu}$.
This completes the proof.
\end{proof}

\begin{remark}\label{rem:cauchy-remark}
The estimate \eqref{eq:cauchy-estimate-2} is not optimized. Its purpose is only to
obtain factorial decay in $k$, which is sufficient for the convergence of the Picard
sequence. 

\end{remark}

\begin{proof}[Proof of Theorem \ref{thm:exponential decay}]
We divide the proof into three steps.

\medskip
\noindent
\textbf{Step 1: Construction of the limit Fourier coefficients.}
By Lemma~\ref{lem:cauchy-estimate}, the Picard sequence
$\{c_k(t,n)\}_{k\geq 0}$ is a Cauchy sequence on $[0,L]\times \mathbb{Z}^{\nu}$.
Hence there exists a limit function, denoted by $c^{\dagger}(t,n)$, such that
\[
c_k(t,n)\longrightarrow c^{\dagger}(t,n)
\qquad \text{uniformly on } [0,L]\times \mathbb{Z}^{\nu}.
\]
Moreover, by Lemma~\ref{lem:uniform-exp-decay},
\[
|c_k(t,n)|\leq B e^{-\frac{\rho}{2}|n|}
\qquad
\text{for all } k\geq 0,\ 0\leq t\leq L,\ n\in \mathbb{Z}^{\nu}.
\]
Passing to the limit yields
\begin{equation}\label{eq:limit-decay}
|c^{\dagger}(t,n)|\leq B e^{-\frac{\rho}{2}|n|},
\qquad
0\leq t\leq L,\ \ n\in \mathbb{Z}^{\nu}.
\end{equation}

We claim that $c^{\dagger}$ satisfies the integral equation
\begin{equation}\label{eq:limit-integral-equation}
c^{\dagger}(t,n)
=
c_0(t,n)
-
\int_0^t \Phi_n(t-\tau)
\sum_{\substack{n_1,n_2\in \mathbb{Z}^{\nu}\\ n_1+n_2=n}}
c^{\dagger}(\tau,n_1)c^{\dagger}(\tau,n_2)\,d\tau,
\end{equation}
where
\[
c_0(t,n)=G_n(t)c(n)+K_n(t)d(n).
\]
Indeed, for every fixed $(t,n)\in [0,L]\times \mathbb{Z}^{\nu}$,
\[
c_k(t,n)
=
c_0(t,n)
-
\int_0^t \Phi_n(t-\tau)
\sum_{\substack{n_1,n_2\in \mathbb{Z}^{\nu}\\ n_1+n_2=n}}
c_{k-1}(\tau,n_1)c_{k-1}(\tau,n_2)\,d\tau.
\]
Since $c_{k-1}\to c^{\dagger}$ uniformly and
\[
|c_{k-1}(\tau,n_1)c_{k-1}(\tau,n_2)|
\leq
B^2 e^{-\frac{\rho}{2}|n_1|}e^{-\frac{\rho}{2}|n_2|},
\]
the series over $n_1+n_2=n$ is absolutely convergent and dominated by a summable
majorant independent of $k$ and $\tau$. Therefore, by dominated convergence, we may
pass to the limit in the Picard iteration and obtain
\eqref{eq:limit-integral-equation}.

\medskip
\noindent
\textbf{Step 2: Existence of a classical solution.}
Define
\begin{equation*}
u^{\dagger}(t,x)
:=
\sum_{n\in \mathbb{Z}^{\nu}} c^{\dagger}(t,n)e^{i\langle n,\omega\rangle x}.
\end{equation*}
By \eqref{eq:limit-decay}, the series for $u^{\dagger}$ converges absolutely and
uniformly on $[0,L]\times \mathbb{R}$.

For every fixed $n\in \mathbb{Z}^{\nu}$, the function $t\mapsto c_k(t,n)$ is continuous
for every $k\geq 0$ by construction of the Picard sequence. Since
$c_k(\cdot,n)\to c^{\dagger}(\cdot,n)$ uniformly on $[0,L]$, it follows that
\begin{equation}\label{eq:c-dagger-continuous}
c^{\dagger}(\cdot,n)\in C([0,L]),
\qquad \forall\, n\in \mathbb{Z}^{\nu}.
\end{equation}

Next define, for $0\leq t\leq L$ and $n\in \mathbb{Z}^{\nu}$,
\begin{equation}\label{eq:nonlinear-coefficients}
\mathcal{N}^{\dagger}(t,n)
:=
\sum_{\substack{n_1,n_2\in \mathbb{Z}^{\nu}\\ n_1+n_2=n}}
c^{\dagger}(t,n_1)c^{\dagger}(t,n_2).
\end{equation}
By \eqref{eq:limit-decay},
\[
|\mathcal{N}^{\dagger}(t,n)|
\leq
B^2
\sum_{\substack{n_1,n_2\in \mathbb{Z}^{\nu}\\ n_1+n_2=n}}
e^{-\frac{\rho}{2}|n_1|}e^{-\frac{\rho}{2}|n_2|}.
\]
Arguing as in Lemma~\ref{lem:cauchy-estimate}, we obtain
\begin{equation}\label{eq:N-dagger-decay}
|\mathcal{N}^{\dagger}(t,n)|
\leq
B^2 \widetilde b_{\rho}^{\,2} e^{-\frac{\rho}{4}|n|},
\qquad
0\leq t\leq L,\ \ n\in \mathbb{Z}^{\nu},
\end{equation}
where $\widetilde b_{\rho}=(12\rho^{-1})^{\nu}$.

For each fixed $n$, the series in \eqref{eq:nonlinear-coefficients} is uniformly
absolutely convergent on $[0,L]$ by \eqref{eq:limit-decay}. Since each summand
$t\mapsto c^{\dagger}(t,n_1)c^{\dagger}(t,n_2)$ is continuous by
\eqref{eq:c-dagger-continuous}, it follows from the Weierstrass M-test that
\begin{equation*}
\mathcal{N}^{\dagger}(\cdot,n)\in C([0,L]),
\qquad \forall\, n\in \mathbb{Z}^{\nu}.
\end{equation*}

Now \eqref{eq:limit-integral-equation} is precisely the Duhamel formula for the linear
equation
\begin{equation}\label{eq:coefficient-ode-final}
\partial_t^2 c^{\dagger}(t,n)
+\Omega(n)^2 c^{\dagger}(t,n)
=
-\Omega(n)^2 \mathcal{N}^{\dagger}(t,n),
\end{equation}
with initial conditions
\begin{equation}\label{eq:coefficient-initial-final}
c^{\dagger}(0,n)=c(n),
\qquad
\partial_t c^{\dagger}(0,n)=d(n).
\end{equation}
Since $\mathcal{N}^{\dagger}(\cdot,n)\in C([0,L])$, standard ODE theory for the forced
harmonic oscillator yields
\begin{equation*}
c^{\dagger}(\cdot,n)\in C^2([0,L]),
\qquad \forall\, n\in \mathbb{Z}^{\nu},
\end{equation*}
and \eqref{eq:coefficient-ode-final}--\eqref{eq:coefficient-initial-final} hold pointwise.

We next derive bounds for the time derivatives. From
\eqref{eq:coefficient-ode-final} and \eqref{eq:N-dagger-decay}, using
$0\leq \Omega(n)^2\leq 1$, we obtain
\begin{equation}\label{eq:ctt-bound}
|\partial_t^2 c^{\dagger}(t,n)|
\leq
|c^{\dagger}(t,n)|+|\mathcal{N}^{\dagger}(t,n)|
\leq
B e^{-\frac{\rho}{2}|n|}
+
B^2 \widetilde b_{\rho}^{\,2} e^{-\frac{\rho}{4}|n|}
\leq
C_1 e^{-\frac{\rho}{4}|n|}
\end{equation}
for some constant $C_1>0$.

Differentiating \eqref{eq:limit-integral-equation} once and using
$\partial_t \Phi_n(t)=\beta(n)\partial_t K_n(t)$, we find
\begin{align*}
\partial_t c^{\dagger}(t,n)
&=
\partial_t G_n(t)\,c(n)
+
\partial_t K_n(t)\,d(n)
-
\int_0^t \partial_t \Phi_n(t-\tau)\,\mathcal{N}^{\dagger}(\tau,n)\,d\tau .
\end{align*}
Since $|\partial_t G_n(t)|\leq 1$, $|\partial_t K_n(t)|\leq 1$, and
$|\partial_t \Phi_n(t)|\leq 1$, it follows from
\eqref{eq:N-dagger-decay} that
\begin{equation}\label{eq:ct-bound}
|\partial_t c^{\dagger}(t,n)|
\leq
A e^{-\rho|n|}
+
A e^{-\rho|n|}
+
L B^2 \widetilde b_{\rho}^{\,2} e^{-\frac{\rho}{4}|n|}
\leq
C_2 e^{-\frac{\rho}{4}|n|}
\end{equation}
for some constant $C_2>0$.

Let
\[
C_{\omega}:=\max_{1\leq j\leq \nu} |\omega_j|.
\]
Since $|n|=\sum_{j=1}^{\nu}|n_j|$, we have
\begin{equation*}
|\langle n,\omega\rangle|
=
\left|\sum_{j=1}^{\nu} n_j\omega_j\right|
\leq
\sum_{j=1}^{\nu} |n_j||\omega_j|
\leq
C_{\omega}|n|.
\end{equation*}
Therefore,
\[
1+|\langle n,\omega\rangle|^2 \leq (1+C_{\omega}^2)(1+|n|^2).
\]
Combining this with \eqref{eq:limit-decay}, \eqref{eq:N-dagger-decay},
\eqref{eq:ct-bound}, and \eqref{eq:ctt-bound}, we get
\[
\sum_{n\in \mathbb{Z}^{\nu}}
\big(1+|\langle n,\omega\rangle|^2\big)
\Big(
|c^{\dagger}(t,n)|+|\partial_t c^{\dagger}(t,n)|
+|\partial_t^2 c^{\dagger}(t,n)|+|\mathcal{N}^{\dagger}(t,n)|
\Big)
<\infty
\]
uniformly for $t\in [0,L]$. Hence the Fourier series defining
$u^{\dagger}$, $u_t^{\dagger}$, $u_{tt}^{\dagger}$, $u_{xx}^{\dagger}$,
$u_{xxtt}^{\dagger}$, and $(u^{\dagger})^2_{xx}$ all converge absolutely and uniformly
on $[0,L]\times \mathbb{R}$. Therefore termwise differentiation is justified, and
\eqref{eq:coefficient-ode-final} implies that $u^{\dagger}$ is a classical solution of
\eqref{eq:IBq-intro}. The initial conditions follow from \eqref{eq:coefficient-initial-final}.
Finally, by \eqref{eq:limit-decay}, the solution $u^{\dagger}$ satisfies
\eqref{eq:IBq-intro} with $\widehat u(t,n)=c^{\dagger}(t,n)$.

\medskip
\noindent
\textbf{Step 3: Uniqueness.}
Let
\[
v(t,x)=\sum_{n\in \mathbb{Z}^{\nu}} \widehat v(t,n)e^{i\langle n,\omega\rangle x},
\qquad
w(t,x)=\sum_{n\in \mathbb{Z}^{\nu}} \widehat w(t,n)e^{i\langle n,\omega\rangle x},
\]
be two spatially quasi-periodic classical solutions satisfying
\begin{equation}\label{eq:uniqueness-decay}
|\widehat v(t,n)|\leq B e^{-\frac{\rho}{2}|n|},
\qquad
|\widehat w(t,n)|\leq B e^{-\frac{\rho}{2}|n|},
\qquad
0\leq t\leq L,\ \ n\in \mathbb{Z}^{\nu}.
\end{equation}
By the same reduction argument as in Section~\ref{sec:reduction}, which is justified
here by the exponential decay \eqref{eq:uniqueness-decay}, both $\widehat v$ and
$\widehat w$ satisfy the coefficient equation
\[
\partial_t^2 a(t,n)+\Omega(n)^2 a(t,n)
=
-\Omega(n)^2
\sum_{\substack{n_1,n_2\in \mathbb{Z}^{\nu}\\ n_1+n_2=n}}
a(t,n_1)a(t,n_2),
\]
together with the same initial conditions. Equivalently, both satisfy the same integral
equation
\[
a(t,n)
=
G_n(t)a(0,n)+K_n(t)\partial_t a(0,n)
-
\int_0^t \Phi_n(t-\tau)
\sum_{\substack{n_1,n_2\in \mathbb{Z}^{\nu}\\ n_1+n_2=n}}
a(\tau,n_1)a(\tau,n_2)\,d\tau .
\]
Since $v$ and $w$ have the same initial data, we obtain
\begin{align}
\widehat v(t,n)-\widehat w(t,n)
&=
-\int_0^t \Phi_n(t-\tau)
\sum_{\substack{n_1,n_2\in \mathbb{Z}^{\nu}\\ n_1+n_2=n}}
\Big(
\widehat v(\tau,n_1)\widehat v(\tau,n_2)
-
\widehat w(\tau,n_1)\widehat w(\tau,n_2)
\Big)\,d\tau .
\label{eq:difference-integral}
\end{align}

Let
\[
\delta(t,n):=\widehat v(t,n)-\widehat w(t,n),
\qquad
0\leq t\leq L,\ \ n\in \mathbb{Z}^{\nu}.
\]
Then \eqref{eq:difference-integral} becomes
\begin{equation}\label{eq:difference-integral-delta}
\delta(t,n)
=
-\int_0^t \Phi_n(t-\tau)
\sum_{\substack{n_1,n_2\in \mathbb{Z}^{\nu}\\ n_1+n_2=n}}
\Big(
\delta(\tau,n_1)\widehat w(\tau,n_2)
+
\widehat v(\tau,n_1)\delta(\tau,n_2)
\Big)\,d\tau .
\end{equation}
Since \eqref{eq:uniqueness-decay} holds, we also have the crude bound
\begin{equation}\label{eq:delta-crude}
|\delta(t,n)|
\leq
|\widehat v(t,n)|+|\widehat w(t,n)|
\leq
2B e^{-\frac{\rho}{2}|n|},
\qquad
0\leq t\leq L,\ \ n\in \mathbb{Z}^{\nu}.
\end{equation}

We claim that, for every $k\geq 1$,
\begin{equation}\label{eq:uniqueness-claim}
|\delta(t,n)|
\leq
\frac{2^{k+1}B^{k+1}t^k}{k!}\,\mathcal{E}_{k+1}(n),
\qquad
0\leq t\leq L,\ \ n\in \mathbb{Z}^{\nu},
\end{equation}
where
\[
\mathcal{E}_{m}(n)
:=
\sum_{\substack{n_1,\dots,n_m\in \mathbb{Z}^{\nu}\\ n_1+\cdots+n_m=n}}
\prod_{j=1}^{m} e^{-\frac{\rho}{2}|n_j|}.
\]

For $k=1$, by \eqref{eq:difference-integral-delta}, \eqref{eq:uniqueness-decay},
and \eqref{eq:delta-crude}, we obtain
\begin{align*}
|\delta(t,n)|
&\leq
\int_0^t |\Phi_n(t-\tau)|
\sum_{\substack{n_1,n_2\in \mathbb{Z}^{\nu}\\ n_1+n_2=n}}
\Big(
|\delta(\tau,n_1)|\,|\widehat w(\tau,n_2)|
+
|\widehat v(\tau,n_1)|\,|\delta(\tau,n_2)|
\Big)\,d\tau \\
&\leq
\int_0^t
\sum_{\substack{n_1,n_2\in \mathbb{Z}^{\nu}\\ n_1+n_2=n}}
\Big(
2B e^{-\frac{\rho}{2}|n_1|}\cdot B e^{-\frac{\rho}{2}|n_2|}
+
B e^{-\frac{\rho}{2}|n_1|}\cdot 2B e^{-\frac{\rho}{2}|n_2|}
\Big)\,d\tau \\
&=
4B^2 t
\sum_{\substack{n_1,n_2\in \mathbb{Z}^{\nu}\\ n_1+n_2=n}}
e^{-\frac{\rho}{2}|n_1|}e^{-\frac{\rho}{2}|n_2|} \\
&=
\frac{2^{2}B^{2}t}{1!}\,\mathcal{E}_{2}(n).
\end{align*}
Hence \eqref{eq:uniqueness-claim} holds for $k=1$.

Now let $k\geq 2$ and assume that \eqref{eq:uniqueness-claim} holds with $k-1$
in place of $k$, that is,
\[
|\delta(\tau,m)|
\leq
\frac{2^{k}B^{k}\tau^{k-1}}{(k-1)!}\,\mathcal{E}_{k}(m),
\qquad
0\leq \tau\leq L,\ \ m\in \mathbb{Z}^{\nu}.
\]
Using \eqref{eq:difference-integral-delta}, \eqref{eq:uniqueness-decay}, and the
induction hypothesis, we get
\begin{align*}
|\delta(t,n)|
&\leq
\int_0^t
\sum_{\substack{n_1,n_2\in \mathbb{Z}^{\nu}\\ n_1+n_2=n}}
|\delta(\tau,n_1)|\,|\widehat w(\tau,n_2)|\,d\tau \\
&\quad+
\int_0^t
\sum_{\substack{n_1,n_2\in \mathbb{Z}^{\nu}\\ n_1+n_2=n}}
|\widehat v(\tau,n_1)|\,|\delta(\tau,n_2)|\,d\tau \\
&\leq
\int_0^t
\sum_{\substack{n_1,n_2\in \mathbb{Z}^{\nu}\\ n_1+n_2=n}}
\frac{2^{k}B^{k}\tau^{k-1}}{(k-1)!}\,\mathcal{E}_{k}(n_1)\,
B e^{-\frac{\rho}{2}|n_2|}\,d\tau \\
&\quad+
\int_0^t
\sum_{\substack{n_1,n_2\in \mathbb{Z}^{\nu}\\ n_1+n_2=n}}
B e^{-\frac{\rho}{2}|n_1|}\,
\frac{2^{k}B^{k}\tau^{k-1}}{(k-1)!}\,\mathcal{E}_{k}(n_2)\,d\tau .
\end{align*}
By the definition of $\mathcal{E}_{k}$ and a relabeling of the frequency variables,
\[
\sum_{\substack{n_1,n_2\in \mathbb{Z}^{\nu}\\ n_1+n_2=n}}
\mathcal{E}_{k}(n_1)e^{-\frac{\rho}{2}|n_2|}
=
\mathcal{E}_{k+1}(n),
\]
and similarly
\[
\sum_{\substack{n_1,n_2\in \mathbb{Z}^{\nu}\\ n_1+n_2=n}}
e^{-\frac{\rho}{2}|n_1|}\mathcal{E}_{k}(n_2)
=
\mathcal{E}_{k+1}(n).
\]
Therefore
\begin{align*}
|\delta(t,n)|
&\leq
\frac{2^{k}B^{k+1}}{(k-1)!}
\int_0^t \tau^{k-1}\,d\tau \, \mathcal{E}_{k+1}(n)
+
\frac{2^{k}B^{k+1}}{(k-1)!}
\int_0^t \tau^{k-1}\,d\tau \, \mathcal{E}_{k+1}(n) \\
&=
\frac{2^{k}B^{k+1}t^k}{k!}\,\mathcal{E}_{k+1}(n)
+
\frac{2^{k}B^{k+1}t^k}{k!}\,\mathcal{E}_{k+1}(n) \\
&=
\frac{2^{k+1}B^{k+1}t^k}{k!}\,\mathcal{E}_{k+1}(n).
\end{align*}
Thus \eqref{eq:uniqueness-claim} holds for all $k\geq 1$.

By the same argument as in Lemma~\ref{lem:cauchy-estimate},
\[
\mathcal{E}_{k+1}(n)
\leq
\widetilde b_{\rho}^{\,k+1} e^{-\frac{\rho}{4}|n|}.
\]
Hence \eqref{eq:uniqueness-claim} implies
\[
|\delta(t,n)|
\leq
2B\widetilde b_{\rho}\,
\frac{(2B\widetilde b_{\rho}t)^k}{k!}
e^{-\frac{\rho}{4}|n|},
\qquad
k\geq 1.
\]
For every fixed $(t,n)\in [0,L]\times \mathbb{Z}^{\nu}$, the right-hand side tends to
zero as $k\to\infty$. Therefore
\[
\delta(t,n)\equiv 0,
\qquad
0\leq t\leq L,\ \ n\in \mathbb{Z}^{\nu}.
\]
That is,
\[
\widehat v(t,n)\equiv \widehat w(t,n),
\qquad
0\leq t\leq L,\ \ n\in \mathbb{Z}^{\nu}.
\]
Hence $v\equiv w$, proving uniqueness.

Combining the three steps, the proof of Theorem~\ref{thm:exponential decay} is complete.
\end{proof}

\section{Polynomial decay and proof of Theorem \ref{thm:polynomial decay}}\label{sec:3}

In this section we assume that the initial Fourier coefficients satisfy
\begin{equation}\label{eq:poly-initial-cd}
|c(n)|\le A(1+|n|)^{-r},
\qquad
|d(n)|\le A(1+|n|)^{-r},
\qquad n\in\mathbb Z^\nu,
\end{equation}
for some constants $A>0$ and $r>0$. We retain the notation introduced in
Subsections~\ref{sec:tree-setup}--\ref{sec:cauchy}. In particular, the flattening map
$\mathrm{Fl}_{k,\gamma}$, the leaf-label sequence $\eta_{k,\gamma}$, the factorization
formula \eqref{eq:C-factorization}, the estimate \eqref{eq:J-bound}, the inequality
\eqref{eq:sigma-ell}, and the tree-sum bound \eqref{eq:tree-sum} remain valid.

Thus, in the polynomial-decay setting, the only genuinely new ingredient is a different
estimate for the tree coefficient $C(k,\gamma)$.

For every $n\in\mathbb Z^\nu$, define
\[
w_r(n):=(1+|n|)^{-r}.
\]
We also introduce the weighted supremum norm
\[
\|f\|_{X_r}:=\sup_{n\in\mathbb Z^\nu}(1+|n|)^r|f(n)|.
\]

\subsection{Weighted convolution and polynomial tree bounds}

The main new ingredient in the polynomial setting is the following weighted convolution
estimate.

\begin{lemma}\label{lem:weighted-convolution}
Assume that $r>\nu$. Then
\begin{equation}\label{eq:weighted-convolution}
\sum_{\substack{n_1,n_2\in\mathbb Z^\nu\\ n_1+n_2=n}}
w_r(n_1)w_r(n_2)
\le
K_{r,\nu}\,w_r(n),
\qquad n\in\mathbb Z^\nu,
\end{equation}
where
\[
K_{r,\nu}:=2^{r+1}H(r;\nu),
\qquad
H(r;\nu):=\sum_{m\in\mathbb Z^\nu}(1+|m|)^{-r}.
\]
Consequently, for any $f,g\in X_r$,
\begin{equation}\label{eq:Xr-algebra}
\sup_{n\in\mathbb Z^\nu}(1+|n|)^r
\sum_{\substack{n_1,n_2\in\mathbb Z^\nu\\ n_1+n_2=n}}
|f(n_1)||g(n_2)|
\le
K_{r,\nu}\,\|f\|_{X_r}\|g\|_{X_r}.
\end{equation}
\end{lemma}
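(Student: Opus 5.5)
The plan is the standard splitting argument for convolutions of polynomial weights. Fix $n\in\mathbb Z^\nu$ and a decomposition $n=n_1+n_2$. By the triangle inequality for the $\ell^1$-norm, $|n|\le |n_1|+|n_2|\le 2\max\{|n_1|,|n_2|\}$, so at least one of $n_1,n_2$ satisfies $|n_j|\ge |n|/2$. For such an index we have
\[
1+|n_j|\ \ge\ 1+\tfrac{|n|}{2}\ \ge\ \tfrac12(1+|n|),
\qquad\text{hence}\qquad
w_r(n_j)\le 2^{r}w_r(n).
\]

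Accordingly, I would split the sum in \eqref{eq:weighted-convolution} into the region $S_1:=\{(n_1,n_2):\,n_1+n_2=n,\ |n_1|\ge |n|/2\}$ and the region $S_2:=\{(n_1,n_2):\,n_1+n_2=n,\ |n_2|\ge |n|/2\}$. These two regions cover all decompositions; their overlap is only counted twice, which is harmless for an upper bound. On $S_1$ I bound $w_r(n_1)\le 2^r w_r(n)$ and sum $w_r(n_2)$ freely over $n_2\in\mathbb Z^\nu$, since $n_1=n-n_2$ is determined. This gives at most $2^r H(r;\nu)\,w_r(n)$. Symmetrically, $S_2$ contributes at most $2^rH(r;\nu)\,w_r(n)$. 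Adding the two yields exactly the constant $K_{r,\nu}=2^{r+1}H(r;\nu)$.

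The only point requiring care is the finiteness of $H(r;\nu)$ for $r>\nu$. I would verify this by counting lattice points: the number of $m\in\mathbb Z^\nu$ with $|m|=j$ is at most $C_\nu(1+j)^{\nu-1}$ (for instance $2^\nu\binom{j+\nu-1}{\nu-1}$). Hence
\[
H(r;\nu)\le C_\nu\sum_{j\ge0}(1+j)^{\nu-1-r}<\infty .
\]
This is the main step where the hypothesis is used, and it is routine.

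For \eqref{eq:Xr-algebra}, I would use the pointwise bounds $|f(n_1)|\le\|f\|_{X_r}w_r(n_1)$ and $|g(n_2)|\le\|g\|_{X_r}w_r(n_2)$, insert them into the convolution, and apply \eqref{eq:weighted-convolution}. Multiplying by $(1+|n|)^r$ and taking the supremum over $n$ then gives the claim.
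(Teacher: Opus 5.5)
Your proposal is correct and matches the paper's argument: the paper writes your $S_1/S_2$ splitting as the pointwise bound $w_r(n_1)w_r(n_2)\le 2^r w_r(n)\bigl(w_r(n_1)+w_r(n_2)\bigr)$ and then sums over $n_1+n_2=n$, which gives the same constant $2^{r+1}H(r;\nu)$. Your lattice-point count showing $H(r;\nu)<\infty$ for $r>\nu$ supplies a detail the paper leaves implicit.
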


\begin{proof}
Fix $n\in\mathbb Z^\nu$. If $n_1+n_2=n$, then at least one of the inequalities
\[
|n_1|\ge \frac{|n|}{2},
\qquad
|n_2|\ge \frac{|n|}{2}
\]
must hold. Therefore,
\[
w_r(n_1)w_r(n_2)
\le
2^r w_r(n)\bigl(w_r(n_1)+w_r(n_2)\bigr).
\]
Summing over all pairs $(n_1,n_2)$ with $n_1+n_2=n$, we obtain
\begin{align*}
\sum_{\substack{n_1,n_2\in\mathbb Z^\nu\\ n_1+n_2=n}}
w_r(n_1)w_r(n_2)
&\le
2^r w_r(n)
\sum_{\substack{n_1,n_2\in\mathbb Z^\nu\\ n_1+n_2=n}}
\bigl(w_r(n_1)+w_r(n_2)\bigr) \\
&=
2^{r+1}w_r(n)\sum_{m\in\mathbb Z^\nu}w_r(m) \\
&=
K_{r,\nu}w_r(n).
\end{align*}
This proves \eqref{eq:weighted-convolution}. The estimate \eqref{eq:Xr-algebra}
follows immediately by writing
\[
|f(n_1)|\le \|f\|_{X_r}w_r(n_1),
\qquad
|g(n_2)|\le \|g\|_{X_r}w_r(n_2),
\]
and applying \eqref{eq:weighted-convolution}.
\end{proof}

We next introduce the tree index measuring the number of nonlinear branchings.

\begin{definition}
For every $\gamma\in\bigcup_{k\ge0}\mathfrak T(k)$, define recursively
\[
\iota(\gamma):=
\begin{cases}
0, & \gamma=\mathtt c\ \text{or}\ \gamma=\mathtt d,\\[1mm]
1+\iota(\gamma_1)+\iota(\gamma_2), & \gamma=(\gamma_1,\gamma_2).
\end{cases}
\]
\end{definition}
We have the following relations among tree indices
\begin{lemma}\label{lem:tree-index-relations}
For every $\gamma\in\bigcup_{k\ge0}\mathfrak T(k)$, one has
\begin{equation}\label{eq:sigma-iota}
\sigma(\gamma)=\iota(\gamma)+1
\end{equation}
and
\begin{equation}\label{eq:iota-ell}
\iota(\gamma)\le \ell(\gamma).
\end{equation}
\end{lemma}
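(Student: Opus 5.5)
Both identities follow by structural induction on $\gamma$, in the same way as \eqref{eq:sigma-ell} in Lemma~\ref{lem:combinatorics}. We compare the recursive definitions of $\sigma$, $\iota$ and $\ell$ case by case.

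For \eqref{eq:sigma-iota}, the base case is a leaf $\gamma\in\{\mathtt c,\mathtt d\}$. There $\sigma(\gamma)=1$ and $\iota(\gamma)=0$, so $\sigma(\gamma)=\iota(\gamma)+1$. For the inductive step, let $\gamma=(\gamma_1,\gamma_2)$ and assume $\sigma(\gamma_i)=\iota(\gamma_i)+1$ for $i=1,2$. Then
\[
\sigma(\gamma)=\sigma(\gamma_1)+\sigma(\gamma_2)=\iota(\gamma_1)+\iota(\gamma_2)+2=\iota(\gamma)+1,
\]
where the last step uses the recursion $\iota(\gamma)=1+\iota(\gamma_1)+\iota(\gamma_2)$. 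This is the usual fact that a full binary tree has one more leaf than internal node.

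For \eqref{eq:iota-ell}, the leaves give $\iota(\mathtt c)=0=\ell(\mathtt c)$ and $\iota(\mathtt d)=0<1=\ell(\mathtt d)$. In the branching case, $\iota$ and $\ell$ obey the same recursion $x(\gamma)=1+x(\gamma_1)+x(\gamma_2)$. Hence the inductive hypotheses $\iota(\gamma_i)\le\ell(\gamma_i)$ give
\[
\iota(\gamma)=1+\iota(\gamma_1)+\iota(\gamma_2)\le 1+\ell(\gamma_1)+\ell(\gamma_2)=\ell(\gamma).
\]
A slightly sharper statement also holds: $\ell(\gamma)-\iota(\gamma)$ equals the number of $\mathtt d$-leaves of $\gamma$. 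This is because $\ell-\iota$ is additive over branchings and takes the values $0$ on $\mathtt c$ and $1$ on $\mathtt d$. We do not need this refinement.

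There is no genuine obstacle here. The only point to be careful about is that the induction is over the tree structure of $\gamma$ rather than over the depth $k$. This is legitimate because $\sigma$, $\iota$ and $\ell$ depend only on $\gamma$ and not on $k$, and every $\gamma\in\bigcup_k\mathfrak T(k)$ is built from leaves by finitely many pairings.
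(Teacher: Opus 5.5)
Your proof is correct and follows the paper's own argument: a structural induction on $\gamma$ that checks the two leaves, then uses additivity of $\sigma$ and the shared recursion $x(\gamma)=1+x(\gamma_1)+x(\gamma_2)$ for $\iota$ and $\ell$. Your side remark is also correct but not needed: $\ell(\gamma)-\iota(\gamma)$ counts the $\mathtt d$-leaves.
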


\begin{proof}
We argue by induction on the tree structure.

If $\gamma=\mathtt c$ or $\gamma=\mathtt d$, then
\[
\sigma(\gamma)=1,
\qquad
\iota(\gamma)=0,
\]
so \eqref{eq:sigma-iota} holds. Moreover,
\[
\iota(\mathtt c)=0\le \ell(\mathtt c)=0,
\qquad
\iota(\mathtt d)=0\le \ell(\mathtt d)=1,
\]
so \eqref{eq:iota-ell} also holds in the leaf case.

Now let $\gamma=(\gamma_1,\gamma_2)$ and assume that both identities hold for
$\gamma_1$ and $\gamma_2$. Then
\[
\sigma(\gamma)
=
\sigma(\gamma_1)+\sigma(\gamma_2)
=
(\iota(\gamma_1)+1)+(\iota(\gamma_2)+1)
=
\iota(\gamma)+1,
\]
which proves \eqref{eq:sigma-iota}. Similarly,
\[
\iota(\gamma)
=
1+\iota(\gamma_1)+\iota(\gamma_2)
\le
1+\ell(\gamma_1)+\ell(\gamma_2)
=
\ell(\gamma),
\]
which proves \eqref{eq:iota-ell}.
\end{proof}

The first polynomial estimate concerns the coefficient part of the tree expansion.

\begin{lemma}\label{lem:C-polynomial-bound-rigorous}
For every $k\ge0$, every $\gamma\in\mathfrak T(k)$, and every
$m\in N(k,\gamma)$, if
\[
\mathrm{Fl}_{k,\gamma}(m)=(m_1,\dots,m_{\sigma(\gamma)}),
\]
then
\begin{equation}\label{eq:C-polynomial-bound-rigorous}
|C(k,\gamma)(m)|
\le
A^{\sigma(\gamma)}
\prod_{j=1}^{\sigma(\gamma)}(1+|m_j|)^{-r}.
\end{equation}
\end{lemma}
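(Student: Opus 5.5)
This bound is an immediate consequence of the factorization formula in Lemma~\ref{lem:C-factorization}, combined with the pointwise polynomial decay \eqref{eq:poly-initial-cd} of the initial data. No summation over frequencies is involved, so no convolution estimate is needed at this stage.

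First we invoke Lemma~\ref{lem:C-factorization}. For $\gamma\in\mathfrak T(k)$ and $m\in N(k,\gamma)$ with $\mathrm{Fl}_{k,\gamma}(m)=(m_1,\dots,m_{\sigma(\gamma)})$, it gives
\[
C(k,\gamma)(m)=\prod_{j=1}^{\sigma(\gamma)} a_{\eta_{k,\gamma,j}}(m_j),
\]
where each label $\eta_{k,\gamma,j}\in\{\mathtt c,\mathtt d\}$. Since $a_{\mathtt c}=c$ and $a_{\mathtt d}=d$, assumption \eqref{eq:poly-initial-cd} yields, whichever label occurs,
\[
|a_{\eta_{k,\gamma,j}}(m_j)|\le A(1+|m_j|)^{-r}
\]
for each $j$. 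Taking absolute values of the product and multiplying these $\sigma(\gamma)$ bounds gives exactly \eqref{eq:C-polynomial-bound-rigorous}.

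As a cross-check, one could instead mimic the proof of \eqref{eq:C-bound} by induction on the tree structure. In the leaf cases $\gamma=\mathtt c$ or $\mathtt d$, the bound is the hypothesis itself. In the branching case $\gamma=(\gamma_1,\gamma_2)$, the recursion \eqref{eq:recursive-C} multiplies the two inductive bounds, and the flattened tuple of $\gamma$ is the concatenation of those of $\gamma_1$ and $\gamma_2$. Since $\sigma(\gamma)=\sigma(\gamma_1)+\sigma(\gamma_2)$, the product of the two bounds is again of the stated form.

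There is no real obstacle. The only point needing care is bookkeeping: the leaf variables $m_j$ must be those produced by the flattening map, which is bijective by Lemma~\ref{lem:flattening}. One should also note that, unlike the exponential case, we do not try to extract a factor $w_r(\mu(m))$ at this stage. That extraction is deferred to the later summation over $\mu(m)=n$, where Lemma~\ref{lem:weighted-convolution} will be applied iteratively.
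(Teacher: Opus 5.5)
Your proposal is correct and is essentially the paper's proof: apply the factorization of Lemma~\ref{lem:C-factorization}, bound each leaf factor $a_{\eta_{k,\gamma,j}}(m_j)$ by \eqref{eq:poly-initial-cd}, and multiply the bounds. Your inductive cross-check is also valid, but it is not needed.
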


\begin{proof}
By Lemma~\ref{lem:C-factorization},
\[
C(k,\gamma)(m)
=
\prod_{j=1}^{\sigma(\gamma)}
a_{\eta_{k,\gamma,j}}(m_j).
\]
Since each $\eta_{k,\gamma,j}$ is either $\mathtt c$ or $\mathtt d$, the assumption
\eqref{eq:poly-initial-cd} implies
\[
|a_{\eta_{k,\gamma,j}}(m_j)|
\le
A(1+|m_j|)^{-r},
\qquad
1\le j\le \sigma(\gamma).
\]
Therefore,
\[
|C(k,\gamma)(m)|
\le
A^{\sigma(\gamma)}
\prod_{j=1}^{\sigma(\gamma)}(1+|m_j|)^{-r}.
\]
This proves \eqref{eq:C-polynomial-bound-rigorous}.
\end{proof}

The next lemma is the key replacement for the compressed coefficient.

\begin{lemma}\label{lem:sum-C-polynomial-bound}
For every $k\ge0$, every $\gamma\in\mathfrak T(k)$, and every $n\in\mathbb Z^\nu$,
one has
\begin{equation}\label{eq:sum-C-polynomial-bound}
\sum_{\substack{m\in N(k,\gamma)\\ \mu(m)=n}}
|C(k,\gamma)(m)|
\le
A\,(A K_{r,\nu})^{\iota(\gamma)}(1+|n|)^{-r}.
\end{equation}
\end{lemma}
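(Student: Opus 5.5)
We argue by induction on the tree structure, exactly as in the proofs of Lemmas~\ref{lem:C-factorization} and \ref{lem:CJ-estimates}. Rather than using the fully flattened product bound \eqref{eq:C-polynomial-bound-rigorous} and then summing over the hyperplane $m_1+\cdots+m_{\sigma(\gamma)}=n$, we use the recursive definition \eqref{eq:recursive-C}. This lets us apply the binary weighted convolution estimate \eqref{eq:weighted-convolution} once at each internal node, so the constant $K_{r,\nu}$ appears exactly $\iota(\gamma)$ times.

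\textbf{Base case.} If $\gamma=\mathtt c$ or $\gamma=\mathtt d$, then $N(k,\gamma)=\mathbb Z^\nu$ and $\mu(m)=m$, so the sum reduces to the single term $m=n$. By \eqref{eq:poly-initial-cd} this is bounded by $A(1+|n|)^{-r}$, which is the claim since $\iota(\gamma)=0$.

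\textbf{Inductive step.} Let $k\ge1$ and $\gamma=(\gamma_1,\gamma_2)\in(\mathfrak T(k-1))^2$. Write $m=(m^{(1)},m^{(2)})$ with $m^{(i)}\in N(k-1,\gamma_i)$. The constraint $\mu(m)=n$ is equivalent to $\mu(m^{(1)})=n_1$ and $\mu(m^{(2)})=n_2$ for some pair with $n_1+n_2=n$. Using \eqref{eq:recursive-C}, we regroup the sum (all terms are nonnegative, so rearrangement is legitimate) as
\[
\sum_{\substack{n_1,n_2\in\mathbb Z^\nu\\ n_1+n_2=n}}
\Bigg(\sum_{\substack{m^{(1)}\in N(k-1,\gamma_1)\\ \mu(m^{(1)})=n_1}}|C(k-1,\gamma_1)(m^{(1)})|\Bigg)
\Bigg(\sum_{\substack{m^{(2)}\in N(k-1,\gamma_2)\\ \mu(m^{(2)})=n_2}}|C(k-1,\gamma_2)(m^{(2)})|\Bigg).
\]
By the induction hypothesis, the two inner sums are bounded by $A(AK_{r,\nu})^{\iota(\gamma_1)}w_r(n_1)$ and $A(AK_{r,\nu})^{\iota(\gamma_2)}w_r(n_2)$ respectively. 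Lemma~\ref{lem:weighted-convolution}, which requires $r>\nu$ and is therefore available under the standing assumption $r>\nu+2$ of Theorem~\ref{thm:polynomial decay}, then bounds the whole expression by
\[
A^2K_{r,\nu}(AK_{r,\nu})^{\iota(\gamma_1)+\iota(\gamma_2)}w_r(n)=A(AK_{r,\nu})^{1+\iota(\gamma_1)+\iota(\gamma_2)}w_r(n).
\]
The exponent equals $\iota(\gamma)$ by the definition of $\iota$, which closes the induction.

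The only point that needs care is the regrouping of the sum over $\{m:\mu(m)=n\}$ into a convolution of the two subtree sums. This follows from the product structure $N(k,\gamma)=N(k-1,\gamma_1)\times N(k-1,\gamma_2)$ together with additivity \eqref{eq:mu-definition} of $\mu$, and it is valid by Tonelli since all summands are nonnegative. Everything else is bookkeeping of the powers of $A$. The key point is that the extra factor of $A$ produced at each merge is absorbed into $(AK_{r,\nu})^{\iota}$ because $\sigma=\iota+1$ (Lemma~\ref{lem:tree-index-relations}). It is also essential to apply the convolution lemma node by node: summing the flattened product directly would lose decay.
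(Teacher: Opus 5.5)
Your proposal is correct and follows essentially the same route as the paper: induction on the tree, regrouping the constrained sum via $N(k,\gamma)=N(k-1,\gamma_1)\times N(k-1,\gamma_2)$ and additivity of $\mu$ into a convolution of the two subtree sums, then applying the induction hypothesis and \eqref{eq:weighted-convolution} once at each internal node. Your closing remarks are not needed and the last one is inaccurate: the recursive argument never uses $\sigma=\iota+1$, and summing the flattened product bound over the hyperplane would not lose decay, because iterating \eqref{eq:weighted-convolution} gives the same factor $K_{r,\nu}^{\sigma(\gamma)-1}=K_{r,\nu}^{\iota(\gamma)}$ (it is that route, not yours, that uses $\sigma=\iota+1$).
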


\begin{proof}
We argue by induction on the tree structure.

If $\gamma=\mathtt c$, then
\[
\sum_{\substack{m\in N(k,\mathtt c)\\ \mu(m)=n}}
|C(k,\mathtt c)(m)|
=
|c(n)|
\le
A(1+|n|)^{-r}.
\]
Since $\iota(\mathtt c)=0$, this is exactly \eqref{eq:sum-C-polynomial-bound}.
The case $\gamma=\mathtt d$ is identical.

Now let $k\ge1$ and let $\gamma=(\gamma_1,\gamma_2)\in(\mathfrak T(k-1))^2$.
Using \eqref{eq:recursive-C} and \eqref{eq:mu-definition}, we obtain
\begin{align*}
&\sum_{\substack{m\in N(k,\gamma)\\ \mu(m)=n}}
|C(k,\gamma)(m)| \\
&\qquad=
\sum_{\substack{n_1,n_2\in\mathbb Z^\nu\\ n_1+n_2=n}}
\left(
\sum_{\substack{m_1\in N(k-1,\gamma_1)\\ \mu(m_1)=n_1}}
|C(k-1,\gamma_1)(m_1)|
\right)
\left(
\sum_{\substack{m_2\in N(k-1,\gamma_2)\\ \mu(m_2)=n_2}}
|C(k-1,\gamma_2)(m_2)|
\right).
\end{align*}
Applying the induction hypothesis to $\gamma_1$ and $\gamma_2$, we get
\begin{align*}
&\sum_{\substack{m\in N(k,\gamma)\\ \mu(m)=n}}
|C(k,\gamma)(m)| \\
&\qquad\le
A(AK_{r,\nu})^{\iota(\gamma_1)}
A(AK_{r,\nu})^{\iota(\gamma_2)}
\sum_{\substack{n_1,n_2\in\mathbb Z^\nu\\ n_1+n_2=n}}
w_r(n_1)w_r(n_2).
\end{align*}
Now \eqref{eq:weighted-convolution} yields
\begin{align*}
\sum_{\substack{m\in N(k,\gamma)\\ \mu(m)=n}}
|C(k,\gamma)(m)|
&\le
A^2 (AK_{r,\nu})^{\iota(\gamma_1)+\iota(\gamma_2)}
K_{r,\nu}\,w_r(n) \\
&=
A(AK_{r,\nu})^{1+\iota(\gamma_1)+\iota(\gamma_2)}w_r(n) \\
&=
A(AK_{r,\nu})^{\iota(\gamma)}(1+|n|)^{-r}.
\end{align*}
This proves \eqref{eq:sum-C-polynomial-bound}.
\end{proof}

\subsection{Uniform polynomial decay of the Picard sequence}

We now prove a uniform polynomial decay estimate for the Picard sequence.

\begin{lemma}\label{lem:uniform-polynomial-decay}
Set
\begin{equation}\label{eq:M-B-L-polynomial}
M_r:=\max\{1,AK_{r,\nu}\},
\qquad
L_r:=\frac{1}{5M_r}.
\end{equation}
Then, for every $k\ge0$, every $0\le t\le L_r$, and every $n\in\mathbb Z^\nu$,
\begin{equation}\label{eq:uniform-polynomial-decay}
|c_k(t,n)|\le 2A(1+|n|)^{-r}.
\end{equation}
\end{lemma}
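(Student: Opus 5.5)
The plan is to mirror the proof of Lemma~\ref{lem:uniform-exp-decay}, with Lemma~\ref{lem:sum-C-polynomial-bound} replacing the crude bound on the coefficient part. Fix $k\ge0$, $0\le t\le L_r$ and $n\in\mathbb Z^\nu$. By the tree representation \eqref{eq:tree-representation} and the time bound \eqref{eq:J-bound}, which does not depend on $m$, we get
\[
|c_k(t,n)|\le \sum_{\gamma\in\mathfrak T(k)}\frac{t^{\ell(\gamma)}}{D(\gamma)}\sum_{\substack{m\in N(k,\gamma)\\ \mu(m)=n}}|C(k,\gamma)(m)|.
\]
Here the time factor is pulled out of the inner sum.

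Next, I apply \eqref{eq:sum-C-polynomial-bound} to the inner sum. This gives
\[
|c_k(t,n)|\le A(1+|n|)^{-r}\sum_{\gamma\in\mathfrak T(k)}\frac{(AK_{r,\nu})^{\iota(\gamma)}t^{\ell(\gamma)}}{D(\gamma)}.
\]
Since $AK_{r,\nu}\le M_r$ and $M_r\ge1$, the inequality $\iota(\gamma)\le\ell(\gamma)$ from \eqref{eq:iota-ell} yields $(AK_{r,\nu})^{\iota(\gamma)}t^{\ell(\gamma)}\le (M_rt)^{\ell(\gamma)}$. For $t\le L_r$ we have $M_rt\le 1/5$, so the tree-sum bound \eqref{eq:tree-sum} bounds the remaining sum by $2$. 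This gives $|c_k(t,n)|\le 2A(1+|n|)^{-r}$.

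The one point needing care is that the prefactor is $A$ and not $M_r$. This works because Lemma~\ref{lem:sum-C-polynomial-bound} charges one factor $AK_{r,\nu}$ per branching $\iota(\gamma)$, rather than one per leaf $\sigma(\gamma)=\iota(\gamma)+1$. The extra leaf contributes exactly the bare constant $A$, which is what preserves the constant $2A$.

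I also need to justify the rearrangement in Lemma~\ref{lem:tree-representation}. The bound above shows that the sum of absolute values of all terms is finite, uniformly in $k$ and $t$. This legitimizes the interchange of sums and integrals, by induction on $k$. The assumption $r>\nu$ (implied by $r>\nu+2$) makes $H(r;\nu)$ finite, so $K_{r,\nu}$ is finite and Lemma~\ref{lem:weighted-convolution} applies. I expect no step to be a genuine obstacle, since everything reduces to previously established lemmas.
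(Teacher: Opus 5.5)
Your proposal is correct and follows the paper's proof essentially step for step: the tree representation with the $m$-uniform bound \eqref{eq:J-bound}, Lemma~\ref{lem:sum-C-polynomial-bound} for the coefficient sums, the comparison $(AK_{r,\nu})^{\iota(\gamma)}t^{\ell(\gamma)}\le (M_rt)^{\ell(\gamma)}$ via \eqref{eq:iota-ell}, and the tree-sum bound \eqref{eq:tree-sum}. Your extra remark, that the same absolute bound justifies the formal rearrangements in Lemma~\ref{lem:tree-representation} inductively via Fubini--Tonelli, soundly fills in a point the paper leaves implicit.
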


\begin{proof}
By Lemma~\ref{lem:tree-representation},
\[
c_k(t,n)
=
\sum_{\gamma\in\mathfrak T(k)}
\sum_{\substack{m\in N(k,\gamma)\\ \mu(m)=n}}
C(k,\gamma)(m)\,J(k,\gamma)(t,m).
\]
Hence,
\[
|c_k(t,n)|
\le
\sum_{\gamma\in\mathfrak T(k)}
\sum_{\substack{m\in N(k,\gamma)\\ \mu(m)=n}}
|C(k,\gamma)(m)|\,|J(k,\gamma)(t,m)|.
\]
Using Lemma~\ref{lem:CJ-estimates}, namely
\[
|J(k,\gamma)(t,m)|\le \frac{t^{\ell(\gamma)}}{D(\gamma)},
\]
we obtain
\[
|c_k(t,n)|
\le
\sum_{\gamma\in\mathfrak T(k)}
\frac{t^{\ell(\gamma)}}{D(\gamma)}
\sum_{\substack{m\in N(k,\gamma)\\ \mu(m)=n}}
|C(k,\gamma)(m)|.
\]
Now apply Lemma~\ref{lem:sum-C-polynomial-bound}:
\[
|c_k(t,n)|
\le
A(1+|n|)^{-r}
\sum_{\gamma\in\mathfrak T(k)}
\frac{(AK_{r,\nu})^{\iota(\gamma)}t^{\ell(\gamma)}}{D(\gamma)}.
\]
Since $M_r=\max\{1,AK_{r,\nu}\}$ and $\iota(\gamma)\le \ell(\gamma)$ by
Lemma~\ref{lem:tree-index-relations}, we have
\[
(AK_{r,\nu})^{\iota(\gamma)}t^{\ell(\gamma)}
\le
(M_r t)^{\ell(\gamma)}.
\]
Therefore,
\[
|c_k(t,n)|
\le
A(1+|n|)^{-r}
\sum_{\gamma\in\mathfrak T(k)}
\frac{(M_r t)^{\ell(\gamma)}}{D(\gamma)}.
\]
Since $0\le t\le L_r=1/(5M_r)$, we have $M_r t\le 1/5$. Hence, by the tree-sum
estimate \eqref{eq:tree-sum} from Subsection~\ref{sec:uniform-decay},
\[
\sum_{\gamma\in\mathfrak T(k)}
\frac{(M_r t)^{\ell(\gamma)}}{D(\gamma)}
\le 2.
\]
Thus
\[
|c_k(t,n)|
\le
2A(1+|n|)^{-r},
\]
which proves \eqref{eq:uniform-polynomial-decay}.
\end{proof}

As a corollary, we obtain the Cauchy estimate in $X_r$ directly, without introducing
an auxiliary supremum quantity.

\begin{corollary}\label{lem:cauchy-estimate-polynomial}
For every $k\ge1$, every $0\le t\le L_r$, and every $n\in\mathbb Z^\nu$, one has
\begin{equation}\label{eq:cauchy-estimate-polynomial}
|c_k(t,n)-c_{k-1}(t,n)|
\le
A\,
\frac{(4AK_{r,\nu} t)^k}{k!}\,
(1+|n|)^{-r}.
\end{equation}
In particular, $\{c_k\}_{k\ge0}$ is a Cauchy sequence in $C([0,L_r];X_r)$.
\end{corollary}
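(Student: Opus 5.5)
We argue by induction on $k$, following the scheme of Lemma~\ref{lem:cauchy-estimate}, but with Lemma~\ref{lem:weighted-convolution} replacing the unweighted $\mathcal{E}_m$ bookkeeping. The weighted estimate keeps everything of the form $w_r(n)$ times a constant, so no decay is lost. The only inputs are the following.
\begin{enumerate}
\item[(a)] The uniform bound $|c_j(\tau,n)|\le 2A\,w_r(n)$ of Lemma~\ref{lem:uniform-polynomial-decay}, valid for $0\le\tau\le L_r$.
\item[(b)] The bound $|\Phi_n|\le 1$ from \eqref{eq:basic-propagator-bounds}.
\item[(c)] The convolution estimate \eqref{eq:weighted-convolution}.
\end{enumerate}

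\textbf{Base case $k=1$.} From \eqref{eq:picard-iteration-recall} and \eqref{eq:Phi-definition},
\[
|c_1(t,n)-c_0(t,n)|\le\int_0^t\sum_{n_1+n_2=n}|c_0(\tau,n_1)||c_0(\tau,n_2)|\,d\tau\le 4A^2 t\,K_{r,\nu}w_r(n),
\]
which equals $A\,(4AK_{r,\nu}t)\,w_r(n)$. This is the claimed bound for $k=1$.

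\textbf{Inductive step $k\ge2$.} We split the difference of the quadratic terms exactly as in Lemma~\ref{lem:cauchy-estimate}:
\[
c_{k-1}c_{k-1}-c_{k-2}c_{k-2}=c_{k-1}\,(c_{k-1}-c_{k-2})+(c_{k-1}-c_{k-2})\,c_{k-2}.
\]
In each term we bound the undifferenced factor by $2Aw_r$ using (a), and the differenced factor by the induction hypothesis $A\,(4AK_{r,\nu}\tau)^{k-1}w_r/(k-1)!$. Lemma~\ref{lem:weighted-convolution} turns the convolution of the two weights into $K_{r,\nu}w_r(n)$. Each of the two terms therefore contributes
\[
2A\cdot A\,K_{r,\nu}\,\frac{(4AK_{r,\nu})^{k-1}}{(k-1)!}\int_0^t\tau^{k-1}\,d\tau\;w_r(n)=\frac{A}{2}\,\frac{(4AK_{r,\nu}t)^k}{k!}\,w_r(n).
\]
Summing the two terms gives exactly $A\,(4AK_{r,\nu}t)^k w_r(n)/k!$, which closes the induction. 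The constant $4=2\cdot2$ is designed to absorb both the factor $2$ from the uniform bound and the factor $2$ from the two terms, so the main thing to check is that this bookkeeping matches precisely.

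\textbf{The Cauchy property.} Multiplying the bound by $(1+|n|)^r$ and taking the supremum over $n$ and $t\in[0,L_r]$ gives
\[
\sup_{t}\|c_k(t)-c_{k-1}(t)\|_{X_r}\le A\,\frac{(4AK_{r,\nu}L_r)^k}{k!}.
\]
This is summable in $k$, since the right-hand side has exponential-series form. Hence $\{c_k\}$ is Cauchy in the sup-in-time $X_r$ norm. Each $c_k(\cdot,n)$ is continuous in $t$, and the series defining $c_k$ converges uniformly in $n$ with the $X_r$ weight, using the domination in (a) and the majorant from (c). Therefore $c_k\in C([0,L_r];X_r)$, and this space is complete under the sup norm. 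I expect no real obstacle here. The only mild point is justifying time-continuity into $X_r$, which follows by dominated convergence exactly as in Step 2 of the proof of Theorem~\ref{thm:exponential decay}.
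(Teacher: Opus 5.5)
Your proof is correct and is essentially the paper's argument. It uses the same induction on $k$, the same splitting $ab-a'b'=(a-a')b+a'(b-b')$, the same uniform bound $2A\,w_r$ on the undifferenced factor, and the same estimate \eqref{eq:weighted-convolution}; your per-term contribution $\frac{A}{2}\frac{(4AK_{r,\nu}t)^k}{k!}w_r(n)$ matches the paper's combined prefactor $4A$ after symmetrizing the two sums.

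The one point where you go beyond the paper (which leaves it implicit) is membership $c_k\in C([0,L_r];X_r)$, and there your stated justification does not work. Dominated convergence, as in Step 2, only gives continuity of each coefficient $c_k(\cdot,n)$, not continuity in the weighted supremum norm. Membership follows instead from the uniform-in-$n$ Lipschitz bound
$\|c_k(t)-c_k(s)\|_{X_r}\le \bigl(2A+4A^2K_{r,\nu}(1+L_r)\bigr)|t-s|$, which comes from $|\partial_tG_n|,|\partial_tK_n|,|\Phi_n|,|\partial_t\Phi_n|\le1$ together with your inputs (a) and (c).
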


\begin{proof}
We argue by induction on $k$.

For $k=1$, by \eqref{eq:picard-iteration-recall}, \eqref{eq:Phi-definition}, and
Lemma~\ref{lem:uniform-polynomial-decay},
\begin{align*}
|c_1(t,n)-c_0(t,n)|
&=
\left|
\int_0^t \Phi_n(t-\tau)
\sum_{\substack{n_1,n_2\in\mathbb Z^\nu\\ n_1+n_2=n}}
c_0(\tau,n_1)c_0(\tau,n_2)\,d\tau
\right| \\
&\le
\int_0^t
\sum_{\substack{n_1,n_2\in\mathbb Z^\nu\\ n_1+n_2=n}}
|c_0(\tau,n_1)|\,|c_0(\tau,n_2)|\,d\tau \\
&\le
4A^2 \int_0^t
\sum_{\substack{n_1,n_2\in\mathbb Z^\nu\\ n_1+n_2=n}}
w_r(n_1)w_r(n_2)\,d\tau \\
&\le
4A^2K_{r,\nu}\,t\,w_r(n) \\
&=
4A^2K_{r,\nu}t\,(1+|n|)^{-r}.
\end{align*}
Thus \eqref{eq:cauchy-estimate-polynomial} holds for $k=1$.

Now let $k\ge2$ and assume that \eqref{eq:cauchy-estimate-polynomial} holds for $k-1$.
Using \eqref{eq:picard-iteration-recall}, we obtain
\begin{align*}
&|c_k(t,n)-c_{k-1}(t,n)| \\
&\qquad=
\left|
\int_0^t \Phi_n(t-\tau)
\sum_{\substack{n_1,n_2\in\mathbb Z^\nu\\ n_1+n_2=n}}
\Big(
c_{k-1}(\tau,n_1)c_{k-1}(\tau,n_2)
-
c_{k-2}(\tau,n_1)c_{k-2}(\tau,n_2)
\Big)\,d\tau
\right| \\
&\qquad\le
\int_0^t
\sum_{\substack{n_1,n_2\in\mathbb Z^\nu\\ n_1+n_2=n}}
\big|
c_{k-1}(\tau,n_1)c_{k-1}(\tau,n_2)
-
c_{k-2}(\tau,n_1)c_{k-2}(\tau,n_2)
\big|\,d\tau.
\end{align*}
Using
\[
ab-a'b'=(a-a')b+a'(b-b'),
\]
together with Lemma~\ref{lem:uniform-polynomial-decay}, we get
\begin{align*}
|c_k(t,n)-c_{k-1}(t,n)|
&\le
\int_0^t
\sum_{\substack{n_1,n_2\in\mathbb Z^\nu\\ n_1+n_2=n}}
|c_{k-1}(\tau,n_1)-c_{k-2}(\tau,n_1)|\,|c_{k-1}(\tau,n_2)|\,d\tau \\
&\quad+
\int_0^t
\sum_{\substack{n_1,n_2\in\mathbb Z^\nu\\ n_1+n_2=n}}
|c_{k-2}(\tau,n_1)|\,|c_{k-1}(\tau,n_2)-c_{k-2}(\tau,n_2)|\,d\tau \\
&\le
4A\int_0^t
\sum_{\substack{n_1,n_2\in\mathbb Z^\nu\\ n_1+n_2=n}}
|c_{k-1}(\tau,n_1)-c_{k-2}(\tau,n_1)|\,w_r(n_2)\,d\tau.
\end{align*}
Applying the induction hypothesis to the difference term and then
\eqref{eq:weighted-convolution}, we obtain
\begin{align*}
|c_k(t,n)-c_{k-1}(t,n)|
&\le
4A\int_0^t
\sum_{\substack{n_1,n_2\in\mathbb Z^\nu\\ n_1+n_2=n}}
A\,
\frac{(4AK_{r,\nu}\tau)^{k-1}}{(k-1)!}\,
w_r(n_1)w_r(n_2)\,d\tau \\
&\le
4K_{r,\nu}A^2
\int_0^t
\frac{(4AK_{r,\nu}\tau)^{k-1}}{(k-1)!}\,d\tau\,
w_r(n) \\
&=
A\,
\frac{(4AK_{r,\nu} t)^k}{k!}\,
(1+|n|)^{-r}.
\end{align*}
This proves \eqref{eq:cauchy-estimate-polynomial} for all $k\ge1$.

Finally,
\[
\sup_{0\le t\le L_r}\|c_k(t,\cdot)-c_{k-1}(t,\cdot)\|_{X_r}
\le
A\frac{(4K_{r,\nu}AL_r)^k}{k!}.
\]
Since
\[
4K_{r,\nu}AL_r
=
\frac{4AK_{r,\nu}}{5M_r}
\le
\frac45,
\]
the series of successive differences converges, and hence $\{c_k\}_{k\ge0}$ is
Cauchy in $C([0,L_r];X_r)$.
\end{proof}

\subsection{Proof of Theorem \ref{thm:polynomial decay}}

\begin{proof}[Proof of Theorem~\ref{thm:polynomial decay}]
By Corollary~\ref{lem:cauchy-estimate-polynomial}, there exists
\[
c\in C([0,L_r];X_r)
\]
such that
\[
\lim_{k\to\infty}\sup_{0\le t\le L_r}\|c_k(t,\cdot)-c(t,\cdot)\|_{X_r}=0.
\]
Moreover, Lemma~\ref{lem:uniform-polynomial-decay} implies
\[
|c(t,n)|\le 2A(1+|n|)^{-r},
\qquad
0\le t\le L_r,\ \ n\in\mathbb Z^\nu.
\]

We claim that $c$ solves the integral equation \eqref{eq:integral-equation}. Indeed,
the linear part passes to the limit trivially, and for the nonlinear term we use
\eqref{eq:Xr-algebra}:
\begin{align*}
&\left\|
\sum_{\substack{n_1,n_2\in\mathbb Z^\nu\\ n_1+n_2=\cdot}}
c_k(t,n_1)c_k(t,n_2)
-
\sum_{\substack{n_1,n_2\in\mathbb Z^\nu\\ n_1+n_2=\cdot}}
c(t,n_1)c(t,n_2)
\right\|_{X_r} \\
&\qquad\le
K_{r,\nu}\|c_k(t,\cdot)-c(t,\cdot)\|_{X_r}\|c_k(t,\cdot)\|_{X_r}
+
K_{r,\nu}\|c(t,\cdot)\|_{X_r}\|c_k(t,\cdot)-c(t,\cdot)\|_{X_r}.
\end{align*}
Since $\sup_k\|c_k(t,\cdot)\|_{X_r}\le 2A$, the right-hand side tends to zero
uniformly in $t$. Hence we may pass to the limit in \eqref{eq:picard-iteration-recall}
and conclude that
\[
c(t,n)
=
c_0(t,n)
-\beta(n)\int_0^t K_n(t-\tau)
\sum_{\substack{n_1,n_2\in\mathbb Z^\nu\\ n_1+n_2=n}}
c(\tau,n_1)c(\tau,n_2)\,d\tau,
\qquad n\in\mathbb Z^\nu.
\]

Define
\[
u(t,x):=\sum_{n\in\mathbb Z^\nu}c(t,n)e^{i\langle n,\omega\rangle x}.
\]
Since $r>\nu+2$, the series defining $u$, $u_t$, $u_{tt}$, $u_{xx}$, and $u_{xxtt}$
are absolutely and uniformly convergent on $[0,L_r]\times\mathbb R$; the same is true
for $(u^2)_{xx}$ by the weighted convolution estimate. Therefore termwise
differentiation is justified, and $u$ is a classical spatially quasi-periodic solution
to \eqref{eq:IBq-intro}. The bound
\[
|\widehat u(t,n)|=|c(t,n)|\le 2A(1+|n|)^{-r}
\]
has already been proved.

It remains to prove uniqueness. Let $c,\widetilde c\in C([0,L_r];X_r)$ be two
solutions of \eqref{eq:integral-equation} with the same initial data and satisfying
\[
\|c(t,\cdot)\|_{X_r}\le 2A,
\qquad
\|\widetilde c(t,\cdot)\|_{X_r}\le 2A,
\qquad 0\le t\le L_r.
\]
Subtracting the two integral equations and using \eqref{eq:Xr-algebra}, we get
\[
\|c(t,\cdot)-\widetilde c(t,\cdot)\|_{X_r}
\le
4AK_{r,\nu}\int_0^t
\|c(\tau,\cdot)-\widetilde c(\tau,\cdot)\|_{X_r}\,d\tau.
\]
By Gronwall's inequality,
\[
\|c(t,\cdot)-\widetilde c(t,\cdot)\|_{X_r}=0,
\qquad 0\le t\le L_r.
\]
Hence $c=\widetilde c$, and therefore the corresponding quasi-periodic solution is
unique in the class satisfying the polynomial decay estimate.

This completes the proof of Theorem~\ref{thm:polynomial decay}.
\end{proof}

\section{Integer power nonlinearities}\label{sec:general-power}

In this final section we explain how the preceding arguments extend to the
generalized improved Boussinesq equation
\begin{equation}\label{eq:gp-main}
    u_{tt}-u_{xx}-u_{xxtt}-(u^p)_{xx}=0,
    \qquad (t,x)\in\mathbb R\times\mathbb R,
\end{equation}
where
$
    p\in\mathbb N$ with $p\ge3.$
The restriction that $p$ is an integer is essential for the present argument, since
then the Fourier coefficients of $u^p$ are given by a finite $p$-fold Cauchy product.

We use the notation introduced in Subsection~\ref{sec:reduction}. In particular,
$\theta_n$, $\beta(n)$, $\Omega(n)$, $G_n$, $K_n$, $c_0(t,n)$, and $\Phi_n(t)$ have
the same meaning as before. For a sequence $a:\mathbb Z^\nu\to\mathbb C$, we write
\[
a^{*p}(n)
:=
\sum_{\substack{n_1,\ldots,n_p\in\mathbb Z^\nu\\
n_1+\cdots+n_p=n}}
\prod_{j=1}^{p}a(n_j),
\qquad n\in\mathbb Z^\nu .
\]
Then the same Fourier-side reduction as in Subsection~\ref{sec:reduction} gives
\begin{equation}\label{eq:gp-ode}
c_{tt}(t,n)+\Omega(n)^2c(t,n)
=
-\Omega(n)^2c(t,\cdot)^{*p}(n),
\qquad n\in\mathbb Z^\nu .
\end{equation}
Equivalently,
\begin{equation}\label{eq:gp-integral}
c(t,n)
=
c_0(t,n)
-
\beta(n)\int_0^t K_n(t-\tau)c(\tau,\cdot)^{*p}(n)\,d\tau .
\end{equation}
Thus the only change from \eqref{eq:integral-equation} is that the quadratic
convolution is replaced by the $p$-fold convolution.

\subsection{Change of the tree structure}\label{subsec:p-ary-tree}

We briefly describe how the tree structure changes in the integer power case. The
set of leaves is unchanged:
\[
    \mathfrak L=\{\mathtt c,\mathtt d\},
\]
corresponding respectively to the initial position and initial velocity. The only
structural change is that every nonlinear vertex now has $p$ children rather than
two. Hence the binary tree family is replaced by the $p$-ary family
\[
\mathfrak T_p(0):=\mathfrak L,
\qquad
\mathfrak T_p(k):=\mathfrak L\cup\bigl(\mathfrak T_p(k-1)\bigr)^p,
\qquad k\ge1.
\]

The Picard sequence associated with \eqref{eq:gp-integral} is
\[
c^{(p)}_0(t,n):=c_0(t,n),
\]
and, for $k\ge1$,
\begin{equation}\label{eq:gp-picard}
c^{(p)}_k(t,n)
=
c_0(t,n)
-
\int_0^t\Phi_n(t-\tau)
\sum_{\substack{n_1,\ldots,n_p\in\mathbb Z^\nu\\
n_1+\cdots+n_p=n}}
\prod_{j=1}^p c^{(p)}_{k-1}(\tau,n_j)\,d\tau .
\end{equation}

For a non-leaf tree
\[
\gamma=(\gamma_1,\ldots,\gamma_p)\in\bigl(\mathfrak T_p(k-1)\bigr)^p,
\]
we define the basic indices recursively by
\[
\sigma_p(\gamma)=\sum_{j=1}^p\sigma_p(\gamma_j),
\qquad
\ell_p(\gamma)=1+\sum_{j=1}^p\ell_p(\gamma_j),
\]
and
\[
D_p(\gamma)=\ell_p(\gamma)\prod_{j=1}^pD_p(\gamma_j),
\]
with the same leaf values as in the quadratic case:
\[
\sigma_p(\mathtt c)=\sigma_p(\mathtt d)=1,
\qquad
\ell_p(\mathtt c)=0,\quad \ell_p(\mathtt d)=1,
\qquad
D_p(\mathtt c)=D_p(\mathtt d)=1.
\]

The frequency variables are defined as before, except that binary products are replaced
by $p$-fold products. Namely,
\[
N_p(k,\gamma)
=
\prod_{j=1}^p N_p(k-1,\gamma_j),
\qquad
\mu_p(m)=\sum_{j=1}^p\mu_p(m^{(j)}),
\]
when
\[
m=(m^{(1)},\ldots,m^{(p)})\in N_p(k,\gamma).
\]
The coefficients in the tree expansion satisfy
\[
C_p(k,\gamma)(m)
=
\prod_{j=1}^p C_p(k-1,\gamma_j)(m^{(j)}),
\]
and
\[
J_p(k,\gamma)(t,m)
=
-\int_0^t
\Phi_{\mu_p(m)}(t-\tau)
\prod_{j=1}^p
J_p(k-1,\gamma_j)(\tau,m^{(j)})\,d\tau .
\]
For the leaves, $C_p$ and $J_p$ are the same as in
\eqref{eq:base-c}--\eqref{eq:base-d}.

With these definitions, the Picard sequence has the representation
\begin{equation}\label{eq:p-tree-representation}
c^{(p)}_k(t,n)
=
\sum_{\gamma\in\mathfrak T_p(k)}
\sum_{\substack{m\in N_p(k,\gamma)\\ \mu_p(m)=n}}
C_p(k,\gamma)(m)J_p(k,\gamma)(t,m).
\end{equation}
This is the direct analogue of Lemma~\ref{lem:tree-representation}. The proof is
identical, with each quadratic branching replaced by a $p$-fold branching.

The corresponding coefficient estimates have the same form. Under the exponential
initial decay assumption,
\[
|C_p(k,\gamma)(m)|
\le
A^{\sigma_p(\gamma)}
\exp\left(-\rho\sum_{j=1}^{\sigma_p(\gamma)}|m_j|\right),
\]
after flattening the leaf variables. Moreover,
\[
|J_p(k,\gamma)(t,m)|
\le
\frac{t^{\ell_p(\gamma)}}{D_p(\gamma)}.
\]
The elementary combinatorial inequality
\[
\sigma(\gamma)\le \ell(\gamma)+1
\]
is replaced by
\[
\sigma_p(\gamma)\le (p-1)\ell_p(\gamma)+1.
\]
Thus the full tree expansion changes only by replacing binary trees with $p$-ary
trees. The estimates below can therefore be closed in the same weighted convolution
spaces as in the quadratic case.

\subsection{Exponential decay}

\begin{theorem}[Integer powers: exponential decay]\label{thm:gp-exp}
Let $p\in\mathbb N$, $p\ge3$, and let $\omega\in\mathbb R^\nu$ be non-resonant.
Assume that there exist $A>0$ and $0<\rho\le1$ such that
\[
|c(n)|\le A e^{-\rho|n|},
\qquad
|d(n)|\le A e^{-\rho|n|},
\qquad n\in\mathbb Z^\nu .
\]
Let
\[
b_\rho:=(6\rho^{-1})^\nu,
\qquad
M_\rho:=\max\{1,Ab_\rho\},
\qquad
B_\rho:=2M_\rho,
\]
and set
\[
L_{p,\rho}:=
\frac{1}{p\,2^{p+3}M_\rho^{p-1}} .
\]
Then, on $[0,L_{p,\rho}]$, the quasi-periodic Cauchy problem for
\eqref{eq:gp-main} admits a classical spatially quasi-periodic solution
\[
u(t,x)=\sum_{n\in\mathbb Z^\nu}\widehat u(t,n)e^{i\langle n,\omega\rangle x}
\]
satisfying
\[
|\widehat u(t,n)|
\le
B_\rho e^{-\frac{\rho}{2}|n|},
\qquad
0\le t\le L_{p,\rho},\quad n\in\mathbb Z^\nu .
\]
Moreover, this solution is unique among all spatially quasi-periodic solutions
satisfying the above exponential decay estimate.
\end{theorem}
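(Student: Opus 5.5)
The plan is to follow the three-step scheme of the proof of Theorem~\ref{thm:exponential decay}, with binary trees replaced by the $p$-ary family $\mathfrak T_p(k)$. \textbf{Step 1 (uniform bound).} Start from the representation \eqref{eq:p-tree-representation}. Flatten the leaf variables and split $e^{-\rho|m|}\le e^{-\frac{\rho}{2}|m|}e^{-\frac{\rho}{2}|n|}$ exactly as in Lemma~\ref{lem:uniform-exp-decay}. This gives
\[
|c^{(p)}_k(t,n)|\le \sum_{\gamma\in\mathfrak T_p(k)}\frac{(Ab_\rho)^{\sigma_p(\gamma)}t^{\ell_p(\gamma)}}{D_p(\gamma)}\,e^{-\frac{\rho}{2}|n|}.
\]
Then use $\sigma_p(\gamma)\le (p-1)\ell_p(\gamma)+1$, which follows by induction as in \eqref{eq:sigma-ell}. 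With it, $(Ab_\rho)^{\sigma_p}t^{\ell_p}\le M_\rho\,(M_\rho^{p-1}t)^{\ell_p}$, so it suffices to bound $\Theta^{(p)}_k(\xi):=\sum_{\gamma\in\mathfrak T_p(k)}\xi^{\ell_p(\gamma)}/D_p(\gamma)$ by $2$.

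\textbf{Step 2 (tree sum).} Prove $\Theta^{(p)}_k(\xi)\le 2$ for $\xi\le L_{p,\rho}M_\rho^{p-1}=1/(p2^{p+3})$ by induction on $k$. The recursion, with $\ell_p(\gamma)\ge1$ dropped from the denominator, gives
\[
\Theta^{(p)}_k\le 1+\xi+\xi\bigl(\Theta^{(p)}_{k-1}\bigr)^p\le 1+\xi+2^p\xi\le 2 .
\]
The last inequality holds because $(1+2^p)\xi\le (1+2^p)/(p2^{p+3})<1$. Hence $|c^{(p)}_k(t,n)|\le B_\rho e^{-\frac{\rho}{2}|n|}$ uniformly in $k$ and in $t\in[0,L_{p,\rho}]$.

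\textbf{Step 3 (Cauchy property and limit).} Telescope the product difference into $p$ terms:
\[
\prod_j a_j-\prod_j a'_j=\sum_{i}\Bigl(\prod_{j<i}a'_j\Bigr)(a_i-a'_i)\Bigl(\prod_{j>i}a_j\Bigr).
\]
Induction on $k$ should then give
\[
|c^{(p)}_k-c^{(p)}_{k-1}|\le \frac{p^{k-1}B_\rho^{(p-1)k+1}t^k}{k!}\,\mathcal E_{(p-1)k+1}(n).
\]
The constant is not important. The convolution of $p$ factors at level $k-1$ raises the number of exponential factors by $p-1$, and the identity $\sum \mathcal E_a\ast e^{-\frac\rho2|\cdot|}\ast\cdots=\mathcal E_{a+p-1}$ is used as in Lemma~\ref{lem:cauchy-estimate}. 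Next bound $\mathcal E_m(n)\le \widetilde b_\rho^{\,m}e^{-\frac{\rho}{4}|n|}$. The differences are then summable, since they are $\lesssim (C t)^k/k!$, and the sequence is uniformly Cauchy on $[0,L_{p,\rho}]\times\mathbb Z^\nu$. Passing to the limit by dominated convergence, the majorant being $B_\rho^p\prod_j e^{-\frac\rho2|n_j|}$, yields the integral equation \eqref{eq:gp-integral} together with the decay bound. The classical regularity then follows verbatim from Step 2 of the quadratic proof. Here $c^{*p}$ decays like $e^{-\frac{\rho}{4}|n|}$, which gives $C^2$ coefficients in time and absolutely convergent differentiated series, with $(1+|\langle n,\omega\rangle|^2)\lesssim 1+|n|^2$.

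\textbf{Uniqueness.} For two solutions $v,w$ with the stated decay, write $\delta=\widehat v-\widehat w$. The same telescoping gives $\delta$ as a sum of $p$ terms, each containing one $\delta$ and $p-1$ factors bounded by $B_\rho e^{-\frac\rho2|\cdot|}$. Start from $|\delta|\le 2B_\rho e^{-\frac\rho2|n|}$ and iterate as in \eqref{eq:uniqueness-claim}. This gives $|\delta(t,n)|\le 2B_\rho (pB_\rho^{p-1}t)^k\mathcal E_{(p-1)k+1}(n)/k!\lesssim (Ct)^k/k!$, which tends to $0$. The main obstacle I expect is the bookkeeping in Step 3: the exact exponents of $B_\rho$ and the index of $\mathcal E$ after the $p$-fold telescoping. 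A wrong count does not affect convergence, since any geometric growth is beaten by $k!$.
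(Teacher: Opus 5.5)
Your argument is correct, but it is not the route the paper takes for this theorem. The paper's sketch drops the tree expansion and runs a contraction argument in the exponentially weighted $\ell^1$ space $Y_{\rho/2}$, with norm $\sum_{n}e^{\frac{\rho}{2}|n|}|a(n)|$. This space is a Banach algebra under convolution, so $\|a^{*p}\|_{Y_{\rho/2}}\le\|a\|_{Y_{\rho/2}}^{p}$ and $\|a^{*p}-b^{*p}\|_{Y_{\rho/2}}\le p(2M_\rho)^{p-1}\|a-b\|_{Y_{\rho/2}}$ on the ball of radius $2M_\rho$. Hence the Duhamel map is a self-map and a $\frac1{16}$-contraction on that ball in $C([0,L_{p,\rho}];Y_{\rho/2})$. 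The pointwise bound $B_\rho e^{-\frac{\rho}{2}|n|}$ is read off from the norm. Uniqueness in the larger pointwise class is obtained by passing to $Y_{\rho/4}$, where the pointwise bound gives uniform boundedness, and applying Gronwall.

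Your proof is instead the faithful $p$-ary transcription of Section~\ref{sec:2}. The count $\sigma_p\le(p-1)\ell_p+1$, the recursion $\Theta^{(p)}_k\le 1+\xi+\xi(\Theta^{(p)}_{k-1})^p$, and the telescoped estimates with $\mathcal E_{(p-1)k+1}$ all check out. So do your constants $p^{k-1}B_\rho^{(p-1)k+1}t^k/k!$ for the Picard differences and $2B_\rho(pB_\rho^{p-1}t)^k/k!$ for the uniqueness iteration.

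What each route buys:
\begin{itemize}
\item The contraction route is much shorter and needs no combinatorics. The unusual factor $p\,2^{p+3}$ in $L_{p,\rho}$ is exactly what makes its self-map and Lipschitz constants close.
\item Your route needs only $(1+2^p)M_\rho^{p-1}t\le 1$ for the uniform bound, and no further smallness for convergence or uniqueness, since the $k!$ does the work. It therefore proves the same conclusions on the longer interval $[0,((1+2^p)M_\rho^{p-1})^{-1}]$.
\item The cost of your route is that it relies on the $p$-ary tree representation \eqref{eq:p-tree-representation}, which the paper only asserts by analogy.
\end{itemize}

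Both arguments take for granted, in the same way, that a classical solution in the decay class satisfies the coefficient integral equation.
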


\begin{proof}[Sketch of proof]
Let
\[
Y_s
:=
\left\{
a:\mathbb Z^\nu\to\mathbb C:
\|a\|_{Y_s}:=\sum_{n\in\mathbb Z^\nu}e^{s|n|}|a(n)|<\infty
\right\}.
\]
The space $Y_s$ is a Banach algebra under discrete convolution. Since
\[
\sum_{n\in\mathbb Z^\nu}e^{-\frac{\rho}{2}|n|}
\le b_\rho,
\]
the initial data belong to $Y_{\rho/2}$ with norm controlled by $Ab_\rho$.

Define the fixed-point map associated with \eqref{eq:gp-integral} by
\[
(\mathcal T_p a)(t,n)
:=
c_0(t,n)
-
\beta(n)\int_0^t K_n(t-\tau)a(\tau,\cdot)^{*p}(n)\,d\tau .
\]
Using the bounds in \eqref{eq:basic-propagator-bounds}, we obtain
\[
\|\mathcal T_p a\|_{C([0,T];Y_{\rho/2})}
\le
(1+T)Ab_\rho
+
T\|a\|_{C([0,T];Y_{\rho/2})}^{p}.
\]
On the closed ball
\[
\mathcal B_T
:=
\left\{
a\in C([0,T];Y_{\rho/2}):
\|a\|_{C([0,T];Y_{\rho/2})}\le2M_\rho
\right\},
\]
and with $T=L_{p,\rho}$, this gives
\[
\|\mathcal T_p a\|_{C([0,T];Y_{\rho/2})}
\le
(1+T)M_\rho+T(2M_\rho)^p
<2M_\rho.
\]
Moreover, for $a,b\in\mathcal B_T$,
\[
\|a^{*p}-b^{*p}\|_{Y_{\rho/2}}
\le
p(2M_\rho)^{p-1}\|a-b\|_{Y_{\rho/2}},
\]
and hence
\[
\|\mathcal T_p a-\mathcal T_p b\|_{C([0,T];Y_{\rho/2})}
\le
Tp(2M_\rho)^{p-1}
\|a-b\|_{C([0,T];Y_{\rho/2})}
\le
\frac1{16}
\|a-b\|_{C([0,T];Y_{\rho/2})}.
\]
Thus $\mathcal T_p$ has a unique fixed point in $\mathcal B_T$. The pointwise
exponential estimate follows immediately from the $Y_{\rho/2}$ bound.

The passage from the coefficient equation to a classical solution is justified as
in the proof of Theorem~\ref{thm:exponential decay}. Indeed, exponential decay
gives the absolute and uniform convergence of all differentiated Fourier series
appearing in \eqref{eq:gp-main}. For uniqueness in the pointwise exponential class,
one argues in the weaker space $Y_{\rho/4}$. The bound
\[
|\widehat u(t,n)|\le B_\rho e^{-\frac{\rho}{2}|n|}
\]
implies uniform boundedness in $Y_{\rho/4}$, and the same multilinear estimate,
followed by Gronwall's inequality, gives uniqueness.
\end{proof}

\subsection{Polynomial decay}

We now turn to polynomially decaying Fourier coefficients. Recall the notation
$w_r$, $X_r$, $H(r;\nu)$, and $K_{r,\nu}$ from the proof of
Theorem~\ref{thm:polynomial decay}. By iterating \eqref{eq:Xr-algebra}, one obtains
the $p$-fold estimate
\begin{equation}\label{eq:p-fold-Xr-algebra}
\|a_1*\cdots*a_p\|_{X_r}
\le
K_{r,\nu}^{p-1}
\prod_{j=1}^p\|a_j\|_{X_r},
\qquad a_1,\ldots,a_p\in X_r .
\end{equation}

\begin{theorem}[Integer powers: polynomial decay]\label{thm:gp-poly}
Let $p\in\mathbb N$, $p\ge3$, and let $\omega\in\mathbb R^\nu$ be non-resonant.
Assume that there exist $A>0$ and $r>\nu+2$ such that
\[
|c(n)|\le A(1+|n|)^{-r},
\qquad
|d(n)|\le A(1+|n|)^{-r},
\qquad n\in\mathbb Z^\nu .
\]
Set
\[
M_{p,r}:=\max\{1,AK_{r,\nu}\},
\qquad
L_{p,r}:=
\frac{1}{p\,2^{p+3}M_{p,r}^{p-1}} .
\]
Then, on $[0,L_{p,r}]$, the quasi-periodic Cauchy problem for \eqref{eq:gp-main}
admits a classical spatially quasi-periodic solution
\[
u(t,x)=\sum_{n\in\mathbb Z^\nu}\widehat u(t,n)e^{i\langle n,\omega\rangle x}
\]
satisfying
\[
|\widehat u(t,n)|
\le
2A(1+|n|)^{-r},
\qquad
0\le t\le L_{p,r},\quad n\in\mathbb Z^\nu .
\]
Moreover, this solution is unique among all spatially quasi-periodic solutions
satisfying the above polynomial decay estimate.
\end{theorem}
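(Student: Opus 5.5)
The plan is to argue exactly as in the sketch of Theorem~\ref{thm:gp-exp}, replacing the Banach algebra $Y_{\rho/2}$ by the weighted space $X_r$ and using the $p$-fold estimate \eqref{eq:p-fold-Xr-algebra}. Set $T:=L_{p,r}$ and
\[
\mathcal B_T:=\Bigl\{a\in C([0,T];X_r):\ \sup_{0\le t\le T}\|a(t,\cdot)\|_{X_r}\le 2M_{p,r}\Bigr\},
\]
and consider the map $\mathcal T_p$ defined by the right-hand side of \eqref{eq:gp-integral}. By \eqref{eq:basic-propagator-bounds} ($|G_n|\le1$, $|K_n(t)|\le t$, $|\Phi_n|\le 1$) and \eqref{eq:poly-initial-cd}, the linear part satisfies $\|c_0(t,\cdot)\|_{X_r}\le (1+t)A$. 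The nonlinear part is bounded by
\[
t\,K_{r,\nu}^{p-1}\|a\|^p\le t\,M_{p,r}^{p-1}(2M_{p,r})^p/ (AK_{r,\nu})^{0}\cdots,
\]
more precisely by $tK_{r,\nu}^{p-1}(2M_{p,r})^p$ on $\mathcal B_T$. Continuity in $t$ with values in $X_r$ follows from the Lipschitz dependence of $G_n,K_n,\Phi_n$ on $t$, uniformly in $n$, since $\Omega(n)<1$.

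\textbf{The main obstacle is the choice of ball.} The theorem asks for the sharp bound $2A(1+|n|)^{-r}$, not $2M_{p,r}$. The radius $2M_{p,r}$ alone would lose this, so I will instead work on the ball of radius $2A$, in analogy with Lemma~\ref{lem:uniform-polynomial-decay}. Self-mapping then requires
\[
(1+T)A+T K_{r,\nu}^{p-1}(2A)^p\le 2A,
\quad\text{i.e.}\quad
T\bigl(1+2^pK_{r,\nu}^{p-1}A^{p-1}\bigr)\le 1 .
\]
Since $AK_{r,\nu}\le M_{p,r}$ and $M_{p,r}\ge1$, we have $1+2^p(AK_{r,\nu})^{p-1}\le 2^{p+1}M_{p,r}^{p-1}$. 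The choice $T=1/(p2^{p+3}M_{p,r}^{p-1})$ makes this product at most $1/(4p)<1$.

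For the contraction, write
\[
a^{*p}-b^{*p}=\sum_{j=1}^{p} a^{*(j-1)}*(a-b)*b^{*(p-j)}
\]
and apply \eqref{eq:p-fold-Xr-algebra} to each term. This gives
\[
\|\mathcal T_pa-\mathcal T_pb\|\le T\,p\,K_{r,\nu}^{p-1}(2A)^{p-1}\|a-b\|\le T\,p\,2^{p-1}M_{p,r}^{p-1}\|a-b\|\le \tfrac1{16}\|a-b\|.
\]
Banach's fixed point theorem then yields a unique fixed point $c$ in the $2A$-ball. Membership in this ball is exactly the pointwise bound $|\widehat u(t,n)|\le 2A(1+|n|)^{-r}$.

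To obtain a classical solution, I follow the proof of Theorem~\ref{thm:polynomial decay}. The fixed point is continuous in $t$, so the forcing $c(t,\cdot)^{*p}$ lies in $C([0,T];X_r)$ by \eqref{eq:p-fold-Xr-algebra}. The forced oscillator \eqref{eq:gp-ode} then gives $c(\cdot,n)\in C^2$, with $|\partial_t^jc(t,n)|\lesssim(1+|n|)^{-r}$ for $j=0,1,2$. Since $r>\nu+2$ and $\theta_n^2\lesssim(1+|n|)^2$, the series for $u,u_t,u_{tt},u_{xx},u_{xxtt},(u^p)_{xx}$ converge absolutely and uniformly on $[0,T]\times\mathbb R$. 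Termwise differentiation is therefore justified, and $u$ solves \eqref{eq:gp-main} classically with the prescribed data.

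For uniqueness, let $\widetilde c$ be the coefficients of another solution obeying the same decay bound. Absolute convergence justifies the Fourier reduction, so $\widetilde c$ satisfies \eqref{eq:gp-integral} and lies in the $2A$-ball of $L^\infty([0,T];X_r)$. The telescoping estimate above gives
\[
\|c(t)-\widetilde c(t)\|_{X_r}\le pK_{r,\nu}^{p-1}(2A)^{p-1}\int_0^t\|c(\tau)-\widetilde c(\tau)\|_{X_r}\,d\tau ,
\]
and Gronwall's inequality yields $c=\widetilde c$.
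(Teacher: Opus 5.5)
Your proposal is correct and follows essentially the same route as the paper: a contraction for $\mathcal T_p$ on the $2A$-ball of $C([0,L_{p,r}];X_r)$. You obtain self-mapping and the Lipschitz constant $1/16$ from \eqref{eq:p-fold-Xr-algebra} and the telescoping identity, justify termwise differentiation using $r>\nu+2$, and get uniqueness by Gronwall, all exactly as in the paper's sketch. The garbled displayed inequality and the preliminary $2M_{p,r}$-ball in your first paragraph are superfluous and should simply be deleted.
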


\begin{proof}[Sketch of proof]
We solve \eqref{eq:gp-integral} in $C([0,T];X_r)$. Let
\[
\mathcal B_T^r
:=
\left\{
a\in C([0,T];X_r):
\|a\|_{C([0,T];X_r)}\le2A
\right\}.
\]
Using \eqref{eq:p-fold-Xr-algebra} and the bounds in
\eqref{eq:basic-propagator-bounds}, we obtain
\[
\|\mathcal T_p a\|_{C([0,T];X_r)}
\le
(1+T)A
+
T K_{r,\nu}^{p-1}(2A)^p .
\]
For $T=L_{p,r}$, the last term satisfies
\[
T K_{r,\nu}^{p-1}(2A)^p
=
A\,T\,2^p(AK_{r,\nu})^{p-1}
\le
\frac{A}{8p}.
\]
Hence $\mathcal T_p$ maps $\mathcal B_T^r$ into itself. Similarly,
\[
\|\mathcal T_p a-\mathcal T_p b\|_{C([0,T];X_r)}
\le
TpK_{r,\nu}^{p-1}(2A)^{p-1}
\|a-b\|_{C([0,T];X_r)}
\le
\frac1{16}
\|a-b\|_{C([0,T];X_r)}.
\]
Thus $\mathcal T_p$ has a unique fixed point
\[
c\in C([0,L_{p,r}];X_r),
\]
and
\[
|c(t,n)|\le2A(1+|n|)^{-r}.
\]

It remains to justify that this fixed point yields a classical solution. Since
$c\in C([0,L_{p,r}];X_r)$ and $X_r$ is stable under the $p$-fold convolution by
\eqref{eq:p-fold-Xr-algebra}, we have
\[
c(\cdot,\cdot)^{*p}\in C([0,L_{p,r}];X_r).
\]
Differentiating \eqref{eq:gp-integral} in time gives
\[
c_t\in C([0,L_{p,r}];X_r),
\]
and the coefficient equation \eqref{eq:gp-ode} gives
\[
c_{tt}(t,n)
=
-\beta(n)c(t,n)-\beta(n)c(t,\cdot)^{*p}(n),
\]
hence
\[
c_{tt}\in C([0,L_{p,r}];X_r).
\]
Since $|\langle n,\omega\rangle|\le C_\omega |n|$ and $r>\nu+2$, all Fourier
series obtained from $u$, $u_t$, $u_{tt}$, $u_{xx}$, $u_{xxtt}$, and $(u^p)_{xx}$
by termwise differentiation converge absolutely and uniformly on
$[0,L_{p,r}]\times\mathbb R$. Therefore the resulting function is a classical
spatially quasi-periodic solution of \eqref{eq:gp-main}.

Finally, if $c$ and $\widetilde c$ are two solutions satisfying the stated polynomial
decay bound, then by \eqref{eq:p-fold-Xr-algebra},
\[
\|c(t,\cdot)-\widetilde c(t,\cdot)\|_{X_r}
\le
pK_{r,\nu}^{p-1}(2A)^{p-1}
\int_0^t
\|c(\tau,\cdot)-\widetilde c(\tau,\cdot)\|_{X_r}\,d\tau .
\]
Gronwall's inequality gives $c=\widetilde c$ on $[0,L_{p,r}]$.
\end{proof}

\section*{Acknowledgement}
We are very grateful to Fei Xu for many useful discussions. H.Z. also thanks Professors Wen Huang, Shiping Liu, and Zhiyan Zhao for helpful discussions. H.Z. is supported by the National Key R \& D Program of China 2023YFA1010200 and the
 National Natural Science Foundation of China No. 12431004. Z.W. is supported by the National Natural Science Foundation of China No. 12341102.

\subsection*{Availability of Data}  No data was used for the research described in the article.

\subsection*{Declarations of Conflict of Interest}
The authors do not have any possible conflicts of
interest.

\end{document}